\documentclass[11pt,a4paper,leqno, headinclude, final]{amsart}

\usepackage[left=4cm,right=4cm,top=3cm,bottom=3cm]{geometry}
\usepackage{ifdraft}
\usepackage{pdfcolfoot}
\ifoptionfinal{
  \newcommand{\COMMENT}[1]{}
}{
  \newcommand{\COMMENT}[1]{{\color{gray}\(\otimes\)}\footnote{{\sffamily\color{gray} #1}}}
}
\usepackage[dvipsnames,usenames,fixpdftex]{xcolor}

\usepackage[T1]{fontenc}
\usepackage[utf8]{inputenc}
\usepackage{lmodern}

\usepackage[tracking=true]{microtype}
\DeclareMicrotypeSet*[tracking]{my}%
  { font = */*/*/sc/* }%
\SetTracking{ encoding = *, shape = sc }{ 45 }


\title[Vector bundles over cohomogeneity one manifolds]{Vector bundles of non-negative curvature over cohomogeneity one manifolds}
\def\titl{Vector bundles of non-negative curvature over cohomogeneity one manifolds}
\def\auth{Manuel Amann \& David Gonz\'alez-\'Alvaro \\ \& Marcus Zibrowius}
\author{\auth}
\date{\today}
\subjclass[2010]{
  53C20, 
  57S15, 
  19M05  
  (Primary),
  55N91, 
  57T15, 
  55P62, 
  19L47  
  (Secondary)}
\keywords{\noindent Non-negative sectional curvature, vector bundle,  homogeneous space, cohomogeneity one manifold, equivariant K-theory, equivariant cohomology, rational homotopy theory}

\usepackage[british]{babel}


\usepackage[hidelinks,pdftex]{hyperref}
\hypersetup{
  pdftitle=\titl,
  pdfauthor=\auth,
  pdftoolbar=false,
  plainpages=false,
  pdfdisplaydoctitle=true
}


\usepackage{mathtools, amssymb, amscd, amsthm}
\usepackage{stmaryrd}
\usepackage{latexsym}
\usepackage[all]{xy}
\usepackage{pb-diagram}
\usepackage{rotating}
\usepackage{multicol}
\usepackage{lscape}
\usepackage{wasysym}
\usepackage{longtable}
\usepackage{enumerate,paralist}
\usepackage[nameinlink,capitalize]{cleveref}

\xyoption{all}

\theoremstyle{plain}
\newtheorem{theo}{Theorem}[section]\crefname{theo}{Theorem}{Theorems}

\newtheorem{main}{Theorem}
\newtheorem*{main*}{Theorem}

\newtheorem{prop}[theo]{Proposition}\crefname{prop}{Proposition}{Propositions}
\crefname{defn}{Definition}{Definitions}
\newtheorem{lemma}[theo]{Lemma}\crefname{lem}{Lemma}{Lemmas}
\crefname{note}{Note}{Notes}
\newtheorem{cor}[theo]{Corollary}\crefname{cor}{Corollary}{Corollaries}
\newtheorem*{cor*}{Corollary}
\newtheorem{ques}[theo]{Question}
\newtheorem*{theo*}{Theorem}
\newtheorem*{ques*}{Question}
\newtheorem*{prop*}{Property (S)}

\theoremstyle{remark}
\newtheorem{rem}[theo]{Remark}\crefname{rem}{Remark}{Remarks}
\newtheorem{introrem}{Remark}\crefname{introrem}{Remark}{Remarks}
\newtheorem{warning}[theo]{Warning}\crefname{warning}{Warning}{Warnings}
\newtheorem{example}[theo]{Example}\crefname{eg}{Example}{Examples}
\newtheorem*{example*}{Example}
\crefname{egs}{Examples}{Examples}
\newtheorem*{examples*}{Examples}

\newcommand*{\define}[1]{\textbf{#1}}  


\renewcommand{\H}{\operatorname{H}}                                
\newcommand{\K}{\operatorname{K}}                                  
\newcommand{\KO}{\operatorname{KO}}                                
\newcommand{\R}{\operatorname{R}}                                  
\newcommand{\I}{\operatorname{I}}                                  
\newcommand{\chern}{\operatorname{ch}}                             
\newcommand{\B}{{\operatorname{B}}}                                
\newcommand{\E}{{\operatorname{E}}}                                
\newcommand{\rank}{\operatorname{rk}}                              

\newcommand{\zz}{{\mathbb{Z}}}                                     
\newcommand{\qq}{{\mathbb{Q}}}                                     
\newcommand{\rr}{{\mathbb{R}}}                                     
\newcommand{\cc}{{\mathbb{C}}}                                     

\newcommand{\point}{\mathrm{pt}}                                   
\newcommand{\s}{{\mathbb{S}}}                                      

\newcommand{\dif} {{\operatorname{d}}}                             
\newcommand{\In} {{\,\subseteq\,}}                                 

\newcommand{\im} {{\operatorname{im\,}}}                           

\newcommand{\ch}{{\operatorname{ch}}}                              
\newcommand{\Hom}{{\operatorname{Hom}}}                            

\newcommand{\APL}{{\operatorname{A_{PL}}}}                         

\newcommand{\xto}[1]{\xrightarrow{#1}}                             
\newcommand{\hto}[1]{\overset{#1}{\hookrightarrow}}                
\newcommand{\biq}[2]{#1\;\!\!\!\sslash \;\!\!\!#2}                 

\newcommand{\odd}{\mathrm{odd}}                                    
\newcommand{\even}{\mathrm{even}}                                  
\newcommand{\borel}{\mathrm{Borel}}

\newcommand{\rk}{\operatorname{rk}}

\setcounter{section}{-1}
\setcounter{secnumdepth}{2}
\setcounter{tocdepth}{2}
\numberwithin{equation}{section}
\let\realequation\equation
\def\equation{\setcounter{equation}{\arabic{theo}}%
   \refstepcounter{theo}%
   \realequation}
\let\realalign\align
\def\align{\setcounter{equation}{\arabic{theo}}%
   \refstepcounter{theo}%
   \realalign}



\begin{document}
\setcounter{section}{0}
\maketitle \thispagestyle{empty}


\begin{abstract}
We provide several results on the existence of metrics of non-negative sectional curvature on vector bundles over certain cohomogeneity one manifolds and homogeneous spaces up to suitable stabilization.

Beside explicit constructions of the metrics, this is achieved by identifying equivariant structures upon these vector bundles via a comparison of their equivariant and non-equivariant K-theory. For this, in particular, we transcribe equivariant K-theory to equivariant rational cohomology and investigate surjectivity properties of induced maps in the Borel fibration via rational homotopy theory.
\end{abstract}


\COMMENT{\textbf{Conventions}

  \noindent
  We use ``G-vector bundle'' instead of $G$-equivariant vector bundle etc.

  \noindent
  We use Oxford spelling, i.e.:
  \begin{compactitem}[-]
  \item
    generalize, formalize, realize, characterize, characterization, etc.      (as in US)

  \item
    analyse, neighbourhood, fibre    (as in generic British)
  \end{compactitem}
  When in doubt, check \url{https://www.oed.com/}.
}

\section*{Introduction}

The Soul Theorem of Cheeger and Gromoll \cite{CheGro72} determines the structure of an open Riemannian manifold of non-negative (sectional) curvature. For such a manifold $X$ there exists a totally convex and totally geodesic closed submanifold $M\subset X$ such that $X$ is diffeomorphic to the total space of the normal bundle of $M$ in $X$.

Every closed manifold $M$ of non-negative curvature can be realized as a soul: just take the Riemannian product $M\times\rr^k$ for any $k\geq 1$, where $\rr^k$ is endowed with the Euclidean flat metric. Since $M\times\rr^k$ is a trivial vector bundle over $M$, it is natural to ask what happens with non-trivial ones.

\begin{ques*}[Converse to the Soul Theorem]
Let $M$ be a closed manifold of non-negative curvature, and let $E$ be a vector bundle over $M$. Does (the total space of) $E$ admit a metric of non-negative curvature?
\end{ques*}

There exist examples of vector bundles over base spaces with infinite fundamental group where the total space admits no metric of non-negative curvature \cite{OzWa94,BelKap01,BelKap03}. However, no obstructions are known in the case of finite fundamental group. Moreover, all real vector bundles over $\mathbb{S}^n$, with $2\leq n\leq 5$, admit non-negative curvature \cite{GroveZiller:Milnor}. For higher dimensional spheres, there is the following result of Rigas \cite{Rig78}: for every real vector bundle $E\to\mathbb{S}^n$, $n$ arbitrary, there is some $k$ such that $E\times\rr^k$ admits a metric of non-negative curvature.  This is one of the results that motivate the present article. As we shall discuss below, Rigas' statement (or even stronger versions of it) have already been shown to hold when one replaces the base space $\mathbb{S}^n$ by certain classes of homogeneous spaces \cite{Gon17,GonZib1}, certain classes of biquotients \cite{GonZib2}, or by several $4$-dimensional cohomogeneity one spaces \cite{GroveZiller:Lifting}. The main purpose of this article is to prove \Cref{THM: big theorem} below, which provides extensions and variations of Rigas' result in the class of cohomogeneity one spaces of arbitrary dimension.

Recall that a manifold $M$ with an action of a compact Lie group $G$ is said to be of \define{cohomogeneity one} if the orbit space $M/G$ is of dimension one (and hence is a circle or an interval). In the most interesting case where $M/G$ is an interval we use the standard notation \((G,H,K_-,K_+)\) for the associated group diagram, i.e. $H$ denotes the isotropy group at a regular orbit and $K_\pm$ the isotropy groups of the orbits projecting to the endpoints of $M/G$ (see \Cref{sec:prelim_cohomo1} for details). Recall that the rank of a compact connected Lie group $G$, denoted by $\rank G$, is defined as the dimension of any of its maximal tori. Note that throughout the entire article ``non-negative curvature’’ will always refer to non-negative sectional curvature, and all the metrics are assumed to be complete.

\begin{main}\label{THM: big theorem}
  Let \(M\) be a closed connected manifold with an action by a compact connected Lie group $G$ with orbit space an interval, and let \((G,H,K_-,K_+)\) be the associated group diagram. Assume that the groups $H, K_\pm$ are connected and that the singular orbits have codimension $2$ in $M$ (i.e.\ $K_\pm/H \cong S^1$).
  \begin{enumerate}
  \item \label{THM: converse soul for cohomo 1}
    Suppose \(\rank G = \rank K_\pm\), and suppose moreover \(\pi_1(G)\) is torsion-free and the groups \(K_\pm\) satisfy Steinberg's assumptions, as detailed in \eqref{eq:Steinbergs-assumptions} below.  (These are satisfied for instance when \(K_+\) and \(K_-\) are tori or when their semisimple parts are simply-connected.)
    Then, for every complex vector bundle \(E\) over \(M\) there is an integer \(k\) such that the product manifold \(E\times\rr^k\) carries a metric of non-negative curvature.

  \item \label{THM: rational converse soul for cohomo 1}
    Suppose \(\rank G - \rank K_\pm\leq 1\). Then, for every real vector bundle \(E\) over \(M\) there are integers \(q>0\) and \(k\) such that the product manifold \(qE\times\rr^k\) carries a metric of non-negative curvature. (Here \(qE\) denotes the Whitney sum of \(q\) copies of \(E\).)
\end{enumerate}
\end{main}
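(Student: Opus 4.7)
The plan is to reduce Theorem A to the combination of a \emph{geometric} step and a \emph{topological} step. The geometric step asserts that every $G$-equivariant vector bundle over $M$ admits, after stabilization by a trivial summand, a $G$-invariant metric of non-negative curvature. The topological step, which is the substantive input, asserts that every ordinary vector bundle over $M$ is, after stabilization (and in case~\eqref{THM: rational converse soul for cohomo 1}, after passing to a $q$-fold Whitney sum), isomorphic to the underlying bundle of a $G$-equivariant bundle. Combining the two immediately gives the statement of the theorem.

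For the geometric step I would exploit the cohomogeneity one decomposition $M = M_-\cup_{G/H}M_+$, where $M_\pm = G\times_{K_\pm}D^2$ deformation-retracts onto the singular orbit $G/K_\pm$. A $G$-equivariant bundle restricted to $M_\pm$ is of the form $G\times_{K_\pm}V_\pm$ for some $K_\pm$-representation $V_\pm$, with gluing along $G\times_H S^1$ controlled by compatible $H$-equivariant data. Equipping each associated bundle with the Riemannian submersion metric induced by a biinvariant metric on $G$, and then performing the Grove--Ziller style gluing (whose codimension 2 hypothesis for the singular orbits persists from $M$ to the total space of $E\oplus\rr^k$ for suitable $k$), yields the desired non-negatively curved metric.

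For the topological step in part~\eqref{THM: converse soul for cohomo 1}, my approach is to compare equivariant and non-equivariant complex K-theory through the Mayer--Vietoris sequences associated with the decomposition $M=M_-\cup M_+$:
\[
\K_G^*(M)\longrightarrow\R(K_-)\oplus\R(K_+)\longrightarrow\R(H),
\]
and its non-equivariant analogue obtained by applying the forgetful functor and invoking the homotopy equivalence $M_\pm\simeq G/K_\pm$. Under the rank equality $\rank G=\rank K_\pm$ together with Steinberg's assumptions, the forgetful map $\R(K_\pm)\to\K^*(G/K_\pm)$ is the Atiyah--Hirzebruch identification as the quotient $\R(K_\pm)/\I(G)\R(K_\pm)$, while Steinberg's theorem makes $\R(K_\pm)$ a free $\R(G)$-module. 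A diagram chase in the two Mayer--Vietoris sequences then lifts any class in $\K^0(M)$ to a class in $\K_G^0(M)$, and a standard stabilization argument promotes the virtual lift to an honest $G$-equivariant bundle whose underlying bundle is stably isomorphic to $E$.

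The main obstacle is part~\eqref{THM: rational converse soul for cohomo 1}, where the rank inequality $\rank G-\rank K_\pm\leq 1$ is too weak for the integral K-theoretic argument above, and one additionally has to work with real rather than complex bundles. My plan here is to rationalize, replace K-theory by rational cohomology via the Chern and Pontryagin characters, and to study the restriction map
\[
\H^*(\E G\times_G M;\qq)\longrightarrow\H^*(M;\qq)
\]
of the Borel fibration. Using Sullivan models for this fibration, the rank-one-defect hypothesis should force surjectivity of this restriction onto a subring containing all Pontryagin classes of vector bundles on $M$. Clearing the resulting denominators by a suitable integer $q$ produces an integral lift $qE$ admitting a $G$-equivariant structure after further stabilization by $\rr^k$; feeding this into the geometric step completes the proof.
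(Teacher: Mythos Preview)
Your two-step architecture (geometric plus topological) and your sketch of Part~\eqref{THM: converse soul for cohomo 1} via Mayer--Vietoris coincide with the paper's approach (the latter is precisely Carlson's argument, which the paper cites as a black box). There are, however, two genuine gaps.

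In the geometric step, the total space of a $G$-vector bundle over $M$ is \emph{not} a cohomogeneity one $G$-manifold: its cohomogeneity is $1+\rank E$, so the phrase ``the codimension~2 hypothesis persists to the total space of $E\oplus\rr^k$'' does not parse, and adding trivial summands does not help. The paper's mechanism is different: one passes to the orthogonal frame bundle $P$ of $E$, observes that $P$ carries a cohomogeneity one action of the \emph{enlarged} group $O(m)\times G$ with group diagram $(O(m)\times G,\,H,\,K_-,\,K_+)$ (the isotropy groups are unchanged because $O(m)$ acts freely), so that $K_\pm/H\cong S^1$ still holds. Grove--Ziller then applies to $P$, and the metric descends to $E\cong P\times_{O(m)}\rr^m$ by Gray--O'Neill.

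More seriously, in Part~\eqref{THM: rational converse soul for cohomo 1} you have not explained how surjectivity of the Borel restriction $\H^*(\E G\times_G M;\qq)\to\H^*(M;\qq)$ produces a \emph{genuine} $G$-equivariant bundle. Classes in $\H^*_G(M;\qq)$ or in $\K^*(M_G;\qq)$ live on the homotopy orbit space and only see completed information; ``clearing denominators'' there does not by itself give a $G$-bundle on $M$. The paper supplies this missing link in two moves: first the Chern character identifies surjectivity of $\H^{\even}_G(M;\qq)\to\H^{\even}(M;\qq)$ with surjectivity of the Borel forgetful map $\K^0(M_G;\qq)\to\K^0(M;\qq)$; second, the Atiyah--Segal completion theorem is used to show that this Borel surjectivity is equivalent to surjectivity of the genuine forgetful map $\K^0_G(M;\qq)\to\K^0(M;\qq)$. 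Only after this step does your denominator-clearing argument yield an honest $G$-structure on $qE\oplus\rr^k$.
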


The case where the orbit space is a circle is discussed in \Cref{SEC:orbit_space_circle}. Before putting \Cref{THM: big theorem} into perspective, let us discuss to which classes of manifolds it applies.

\begin{introrem}\label{EX:cohomo_1_codim_2_orbits}
First of all, the class of cohomogeneity one manifolds with singular orbits of codimension $2$ turns out to be fairly rich. It includes (without specifying the corresponding action): $\mathbb{S}^n$ with $n=2,3,4,5,7$, the $4$-manifolds $\mathbb{S}^2\times\mathbb{S}^2$, $\cc P^2$, $\cc P^2\sharp \overline{\cc P}^2$ (where $\overline{\cc P}^2$ denotes $\cc P^2$ with opposite orientation), every homotopy $\rr P^5$, every $SO(3)$- and every $SO(4)$-principal bundle over $\mathbb{S}^4$ \cite{GroveZiller:Milnor}, and an infinite family of $10$-dimensional manifolds which admit free $SU(2)$-actions producing very interesting classes of non-negatively curved $7$-manifolds \cite{GoKeSh17}. Moreover, this class is closed under taking products with homogeneous manifolds, endowed with the obvious product action.

Part~(1) of \Cref{THM: big theorem} only applies to few of these manifolds. Indeed, as explained in more detail in \Cref{REM:maximal_rank_iff_positive_characteristic} below, our assumptions imply that the manifolds in question have positive Euler characteristic. However, the set of manifolds to which Part~(1) applies is far from empty. For example, as we will explain in \Cref{SS:DeVito_example}, in every even dimension $\geq 4$ there exists a cohomogeneity one space, namely $(\cc P^2\sharp \overline{\cc P}^2 )\times\left(\mathbb{S}^2\right)^n$ with $n\geq 0$, to which Part~(1) of \Cref{THM: big theorem} applies (see \Cref{PROP:THMA_applies_to_M_2n4}), and this space is not even homotopy equivalent to a homogeneous space (see \Cref{PROP:M_2n4_isnot_homogeneous}). This example was communicated to us by Jason DeVito.

Part~(2) of \Cref{THM: big theorem}, on the other hand, also applies to certain manifolds with vanishing Euler characteristic. If a space $M$ as in Part~(2) additionally satisfies $\oplus_{i>0}\H^{4i}(M,\qq)\neq 0$, then the theorem ensures the existence of infinitely many real vector bundles over $M$ which admit non-negatively curved metrics and belong to pairwise distinct stable classes, see \Cref{REM:stable_classes}.
\end{introrem}

Let us now provide some historical and mathematical context. Cohomogeneity one manifolds are exactly one dimension more complicated than homogeneous manifolds, in the sense that they carry a group action whose orbit space is of dimension one rather than dimension zero. \Cref{THM: big theorem} is, on the one hand, a generalization of existence results known in the homogeneous case to cohomogeneity one manifolds, bringing together different tools from the literature. On the other hand, it provides new charaterizations via techniques which have not yet been applied in this context, and which also provide new insight in the homogeneous situation (see \Cref{THM:surjectivity of homogeneous bundles} below).  We hope that this main theorem and the subsequent theorems below will help complete the entire picture for actions of cohomogeneity at most one.

\medskip

Recall first that every closed homogeneous space $G/H$ admits a $G$-invariant metric of non-negative curvature, and that the same holds true for the total space of any real $G$-vector bundle over $G/H$ (see \Cref{SUBSEC:G-vector-bundles} for the definition of $G$-vector bundle). In contrast to the homogeneous case, cohomogeneity one manifolds do not fit so well into the world of non-negative curvature. In particular, there exist many examples without invariant metrics of non-negative curvature \cite{GrVeWiZi06}. However, the special class that we consider here is known to fit in very nicely: Grove and Ziller showed in \cite{GroveZiller:Milnor} that any cohomogeneity one $G$-manifold with non-principal orbits of codimension $\leq 2$ does admit an invariant metric of non-negative curvature, and it follows that any real $G$-vector bundle over such a $G$-space admits a $G$-invariant metric of non-negative curvature (see \Cref{hop:non-negative-metrics} in \Cref{SEC:G_vector_bundles_nonnegative_curvature_cohomo1}). In view of these results it is natural to ask:

\begin{ques*}
Given a manifold with a $G$-action, ``how many'' vector bundles are (isomorphic to the underlying vector bundles of) $G$-vector bundles?
\end{ques*}

There are some positive general results; for instance the tangent bundle is a $G$-vector bundle \cite[p.~303]{Bre72}. Moreover, there are some $G$-manifolds for which all (real) vector bundles are $G$-vector bundles: the transitive $SU(2)$-action on $\mathbb{S}^2 = SU(2)/S^1$, any action on $\mathbb{S}^3$ since all vector bundles are trivial, or the cohomogeneity one $SU(2)$-action on $\mathbb{S}^4$ (as shown by Grove and Ziller in \cite{GroveZiller:Milnor}).

However, vector bundles do not admit $G$-vector bundle structures in general. 
It is therefore remarkable that, under certain (strong) symmetry assumptions on $M$, every complex or real vector bundle carries, up to stabilization, a $G$-vector bundle structure. That is, the following property holds:

\begin{prop*} For every complex or real vector bundle $E$ over $M$ there is some integer $k$ such that the Whitney sum $E\oplus\cc^k$ or $E\oplus \rr^k$ carries a \(G\)-vector bundle structure.
\end{prop*}

When all $G$-vector bundles over a given $M$ carry non-negatively curved metrics, Property (S) has the following immediate implication: for every complex or real vector bundle $E$ over $M$ there is some integer $k$ such that $E\times\rr^{2k}$ or $E\times\rr^k$ carries a metric of non-negative curvature.

\smallskip

A key feature of Property (S) is that it can be characterized in terms of the (equivariant) K-theory of the manifold. More precisely, Property (S) holds if and only if the map $\K_G^0(M)\to \K^0(M)$ or $\KO_G^0(M)\to \KO^0(M)$ induced by forgetting the $G$-vector bundle structure is surjective (see \Cref{surjectivity-of-forgetful-map-in-Kgroups}). Here $\K^0(M)$ and $\K_G^0(M)$ denote the usual and the $G$-equivariant K-theory ring of complex bundles, respectively, and the notation $\KO^0$ indicates K-rings of real bundles. These rings extend to generalized cohomology theories $\K^*(M)$ and $\K^*_G(M)$ and there is an induced forgetful map $\K_G^*(M)\to \K^*(M)$. Surjectivity of this map in all degrees evidently implies surjectivity of the forgetful map $\K_G^0(M)\to \K^0(M)$ in degree zero. (See \Cref{SUBSEC:genuine-equivariant-K-theory} for definitions and details.)  Let us review existing results concerning the surjectivity of these maps.

The surjectivity of $\K_G^*(M)\to \K^*(M)$ has been studied both in the homogeneous and in the cohomogeneity one case. Suppose that $G$ is connected with torsion-free fundamental group.
 Pittie showed that $\K_G^*(G/H)\to \K^*(G/H)$ is surjective if $\rank G=\rank H$ \cite{Pit72}. In recent work, Carlson \cite{Carlson:coho1} studied the K-theory of cohomogeneity one spaces \(M\) and showed that $\K_G^*(M)\to \K^*(M)$ is surjective if the groups in the associated diagram  \((G,H,K_-,K_+)\) are connected, \(\rank G = \max\{\rank K_+,\rank K_-\}\) and \(K_+\) and \(K_-\) satisfy Steinberg's assumptions \eqref{eq:Steinbergs-assumptions}. Building upon this work, in \Cref{carlson-surjectivity} we prove that Property~(S) holds for complex bundles over those manifolds which satisfy the prerequisites of Carlson's results. This, taken together with the results above concerning the existence of non-negatively curved metrics, implies Part~(1) of \Cref{THM: big theorem} (see \Cref{SUBSEC:stabilization-over-cohomogeneity-one-spaces} for details).


\smallskip

Motivated by Rigas' result mentioned above, it was observed in \cite{GonZib1} that Pittie's hypothesis on the rank can be relaxed while keeping the surjectivity of $\K_G^0(G/H)\to \K^0(G/H)$, which is enough for our purposes. More precisely, it was shown that $\K_G^0(G/H)\to \K^0(G/H)$ is surjective if $\rank G -\rank H\leq 1$ \cite[Theorem 3.6]{GonZib1}. In the present article we prove that this implication is an equivalence by studying the induced map $\K_G^0(G/H)\otimes\qq\to \K^0(G/H)\otimes\qq$ (see the discussion prior to \Cref{THM:double_stabilization_homogeneous}), which leads us to the following characterization of this class of homogeneous spaces.

\begin{main}\label{THM:surjectivity of homogeneous bundles}
  Let $G/H$ be a closed homogeneous space,  where $G$ is connected with torsion-free fundamental group, and $H\subset G$ is a closed connected subgroup. Then every complex vector bundle over $G/H$ carries a $G$-vector bundle structure up to stabilization if and only if $\rank G -\rank H\leq 1$.
\end{main}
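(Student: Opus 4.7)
The ``if'' direction is \cite[Theorem~3.6]{GonZib1}, combined with the equivalence between stable $G$-lifting and surjectivity of the forgetful map recorded in \Cref{surjectivity-of-forgetful-map-in-Kgroups}. For the converse, the plan is to assume $\rank G - \rank H \geq 2$ and establish that the forgetful map $\phi\colon\K_G^0(G/H)\to\K^0(G/H)$ is not surjective even after tensoring with $\qq$; this rules out the stable lifting property at once.

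The strategy is to rationalize and translate the problem to rational cohomology via the Chern character. Under the isomorphism $\K_G^0(G/H)\cong\R(H)$, the map $\phi$ sends a representation $V$ to the associated bundle $G\times_H V$, which is the pullback along the classifying map $c\colon G/H\to BH$ of the corresponding universal bundle on $BH$. Naturality of the Chern character, combined with the isomorphism $\K^0(G/H)\otimes\qq\cong\H^{\even}(G/H;\qq)$, identifies the image of $\phi\otimes\qq$ with the image of the ring map
\[
c^*\colon \H^{\even}(BH;\qq)\to\H^{\even}(G/H;\qq).
\]
Hence it suffices to show that this $c^*$ fails to be surjective when $\rank G - \rank H \geq 2$.

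For this step I would invoke the Cartan--Koszul model for the rational cohomology of the Borel fibration $G/H\to BH\to BG$. With $r=\rank G$, $s=\rank H$, $\H^*(BG;\qq)=\qq[u_1,\ldots,u_r]$, $\H^*(BH;\qq)=\qq[y_1,\ldots,y_s]$, and $p_i(y)\in\qq[y]$ the image of $u_i$, the model is the CDGA
\[
\bigl(\qq[y_1,\ldots,y_s]\otimes\Lambda(\sigma u_1,\ldots,\sigma u_r),\,d\bigr),\qquad d(\sigma u_i)=p_i,\ |\sigma u_i|=|u_i|-1,
\]
computing $\H^*(G/H;\qq)$. Classical results of Cartan, Koszul and Borel (cf.\ Greub--Halperin--Vanstone) then give a decomposition $\H^*(G/H;\qq)\cong \qq[y]/(p_1,\ldots,p_r) \otimes \Lambda(z_1,\ldots,z_{r-s})$ in which the image of $c^*$ is exactly the polynomial factor and the $z_i$ are ``surviving'' odd Samelson generators. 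When $r-s\geq 2$, the product $z_1 z_2$ is therefore a non-zero even-degree class outside the image of $c^*$, and any complex bundle whose Chern character has non-trivial $z_1 z_2$-component provides the sought obstruction to stable $G$-equivariance.

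The main obstacle in executing this plan is the structural statement about $\H^*(G/H;\qq)$: one has to verify that the ideal $(p_1,\ldots,p_r)\subset\qq[y]$ has height $s$---i.e.\ that $\qq[y]/(p_i)$ is finite-dimensional, which is forced by the compactness of $G/H$---and that the Cartan--Koszul model really decomposes as the tensor product above with $r-s$ genuinely non-trivial odd generators whose pairwise products remain non-trivial. This reflects the rational homotopy of the Borel fibration, whose long exact sequence produces at least $r-s$ odd rational homotopy classes on $G/H$, but its careful extraction is the crux of the argument.
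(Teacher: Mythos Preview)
Your overall strategy coincides with the paper's: pass to rational coefficients, identify the forgetful map with the ring map \(c^*\colon \H^{\even}(\B H;\qq)\to \H^{\even}(G/H;\qq)\) via the Chern character (the paper phrases this as \(\H^{\even}(j)\) in the Borel fibration, using \(\H^*_G(G/H)\cong \H^*(\B H)\), together with \Cref{surjectivity_of_Heven_iff_K0}), and then argue that \(c^*\) is not surjective when \(\rank G-\rank H\geq 2\).

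The gap is in the last step. The tensor decomposition
\[
\H^*(G/H;\qq)\;\cong\;\qq[y]/(p_1,\dots,p_r)\;\otimes\;\Lambda(z_1,\dots,z_{r-s})
\]
that you invoke is \emph{not} valid in general: it is equivalent to formality of \(G/H\), and non-formal compact homogeneous spaces exist (see e.g.\ \cite{Ama13}). In the language of the paper's \Cref{prop00}, the pure minimal model splits as \((\Lambda V',\dif)\otimes(\Lambda\langle x_1,\dots,x_k\rangle,0)\) with \(k\) maximal, and one has \(k+\chi_\pi(\Lambda V',\dif)=\rank G-\rank H\); your argument tacitly assumes \(k=\rank G-\rank H\), i.e.\ \(\chi_\pi(\Lambda V',\dif)=0\). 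When \(k<\rank G-\rank H\) there simply are not enough free odd exterior generators to form the product \(z_1z_2\), and the even-degree class you need to exhibit outside \(\im c^*\) is of a different nature (it lies in \(\H_{>0}\) but is not a product of odd spherical classes).

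The paper addresses precisely this difficulty in \Cref{prop01} and \Cref{cor01}: for any pure elliptic Sullivan algebra one has \(\H_0=\H^{\even}\) if and only if \(\chi_\pi\leq 1\), with the non-formal case \(\chi_\pi(\Lambda V',\dif)\geq 2\) handled by a Poincar\'e-duality and spectral-sequence argument that locates an even-degree class of positive lower grading without ever producing free exterior generators. Since \(\im c^*=\H_0(G/H)\), this yields the desired non-surjectivity directly (\Cref{THM:half_equiv_formal_homogeneous}). Your sketch is correct and complete in the formal case, and the identification of ``the crux'' is accurate; what is missing is exactly the replacement for the \(z_1z_2\)-argument when no such \(z_i\) are available.
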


The study of \emph{real} vector bundles is quite different and significantly more complicated. In particular, the condition $\rank G -\rank H\leq 1$ is neither neccesary nor sufficient for the surjectivity of $\KO_G^0(G/H)\to \KO^0(G/H)$. There exist examples with $\rank G -\rank H=0$ for which it is not surjective  (e.g.\ $SU(2)^4/T^4$, which is diffeomorphic to $\mathbb{S}^2\times\mathbb{S}^2 \times\mathbb{S}^2\times\mathbb{S}^2$), as well as examples with \(\rank G - \rank H\) equal to $1$, $2$ or $3$ where it is surjective (e.g.\ any $G/H$ diffeomorphic to $\mathbb{S}^7$, $\mathbb{S}^7\times\mathbb{S}^7$ or $\mathbb{S}^7\times\mathbb{S}^7\times\mathbb{S}^7$, since $\KO^0(G/H)$ is trivial). We refer to \cite{GonZib1} for these and other results. In the cohomogeneity one case, we are not aware of any existing work on the map $\KO_G^0(M)\to \KO^0(M)$. In this article we are able to provide results for real bundles both for homogeneous and cohomogeneity one spaces, however we obtain a weaker conclusion; see Theorems \ref{THM:double_stabilization_homogeneous} and \ref{THM:rational_stabilization_cohomo1} below.

\smallskip

Going back to the complex case, and as done in the homogeneous setting, we would like to relax Carlson's hypothesis \(\rank G = \max\{\rank K_+,\rank K_-\}\) while keeping the surjectivity of $\K_G^0(M)\to \K^0(M)$, see \Cref{REM:carlson_not_necessary}. Unfortunately, for cohomogeneity one spaces this is a less straight-forward task. In order to obtain further results we therefore pass to the rational setting and study the induced map $\K^0_G(M)\otimes\qq\to \K^0(M)\otimes\qq$. The surjectivity of this map also has a characterization in terms of vector bundles, although in this case it is weaker than Property~(S) and one needs a double stabilization (see \Cref{rational-surjectivity-of-forgetful-map-in-Kgroups}). This characterization has one advantage, though: it applies not only to complex bundles but also to real ones. This is because the realification map $\K^0(M)\otimes\qq\to \KO^0(M)\otimes\qq$ is surjective for any closed manifold $M$ (see \Cref{prop:rational_realification_ir_surjective}). In this work we characterize the surjectivity of $\K^0_G(M)\otimes\qq\to \K^0(M)\otimes\qq$ for some subclasses of homogeneous and cohomogeneity one spaces.

In the case of homogeneous spaces $G/H$ with $G,H$ connected we show that $\K^0_G(G/H)\otimes\qq\to \K^0(G/H)\otimes\qq$ is surjective if and only if $\rank G -\rank H\leq 1$, see \Cref{THM:half_equiv_formal_homogeneous}. This has two immediate consequences: one implication of Theorem~\ref{THM:surjectivity of homogeneous bundles} (namely, the ``only if'' direction), and the following result for the stabilization of \emph{real} vector bundles.

\begin{main}\label{THM:double_stabilization_homogeneous}
Let $G/H$ be a closed homogeneous space, with $G,H$ compact and connected and satisfying $\rank G- \rank H\leq 1$. Then, for every real vector bundle \(E\) over \(G/H\) there are integers \(q>0\) and \(k\) such that the Whitney sum \(qE\oplus \rr^k\) carries a $G$-vector bundle structure and hence the product manifold \(qE\times\rr^k\) admits a metric of non-negative curvature.
\end{main}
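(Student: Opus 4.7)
The plan is to establish rational surjectivity of the real forgetful map $\KO_G^0(G/H)\otimes\qq \to \KO^0(G/H)\otimes\qq$ and then convert this into the desired integral stabilization statement, from which the geometric conclusion follows by Part~(1) of \Cref{hop:non-negative-metrics}.

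First, I would invoke \Cref{THM:half_equiv_formal_homogeneous} under the hypothesis $\rank G - \rank H \leq 1$ to conclude that the Borel forgetful map $\H_G^\even(G/H;\qq) \to \H^\even(G/H;\qq)$ is surjective. Via the Chern character, \Cref{surjectivity_of_Heven_iff_K0} then transfers this to rational surjectivity of the complex forgetful map $\K_G^0(G/H)\otimes\qq \to \K^0(G/H)\otimes\qq$.

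Next, I would pass from complex to real K-theory. The realification map $\K^0(G/H)\otimes\qq \to \KO^0(G/H)\otimes\qq$ is surjective by \Cref{prop:rational_realification_ir_surjective}, and it is natural with respect to forgetting the $G$-equivariant structure. In the resulting commutative square of forgetful and realification maps, the composition $\K_G^0(G/H)\otimes\qq \to \KO^0(G/H)\otimes\qq$ is therefore surjective, which forces the real forgetful map $\KO_G^0(G/H)\otimes\qq \to \KO^0(G/H)\otimes\qq$ to be surjective as well.

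Finally, given a real vector bundle $E$ over $G/H$, rational surjectivity yields an integer $q > 0$ and a virtual $G$-equivariant class $y \in \KO_G^0(G/H)$ whose image in $\KO^0(G/H)$ equals $q[E]$, after clearing denominators and killing the (finite) torsion of the finitely generated group $\KO^0(G/H)$. Representing $y$ up to stabilization by a genuine $G$-vector bundle $F$ and applying the standard translation \Cref{surjectivity-of-forgetful-map-in-Kgroups} to the element $q[E]$, I obtain an integer $k$ such that $qE \oplus \rr^k$ admits a $G$-vector bundle structure. Part~(1) of \Cref{hop:non-negative-metrics} then supplies a $G$-invariant metric of non-negative sectional curvature on its total space, which is diffeomorphic to $qE\times\rr^k$.

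The substantive work in this chain lies in the earlier results, and above all in \Cref{THM:half_equiv_formal_homogeneous}, whose proof is where the rational-homotopy and Borel-fibration input enters. The only genuine technical obstacle in the present argument is the passage, in the last step, from a rational identity to an integral relation witnessing stabilization; this requires careful bookkeeping of denominators and of torsion in the non-equivariant $\KO$-group, but is otherwise formal.
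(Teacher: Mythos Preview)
Your proof is correct and follows the same overall chain as the paper: \Cref{THM:half_equiv_formal_homogeneous} $\Rightarrow$ \Cref{surjectivity_of_Heven_iff_K0} $\Rightarrow$ rational stabilization $\Rightarrow$ Part~(1) of \Cref{hop:non-negative-metrics}. The only difference is organizational: the paper bundles the passage from rational surjectivity of the complex forgetful map to the real stabilization statement into a single citation of \Cref{rational-surjectivity-of-forgetful-map-in-Kgroups}, whereas you unpack this step and route it through the rational real equivariant K-group $\KO_G^0(G/H)\otimes\qq$. The paper's proof of \Cref{rational-surjectivity-of-forgetful-map-in-Kgroups} never passes through $\KO_G^0$; it stays on the complex side and realifies only non-equivariantly at the end. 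Your variant is equally valid (surjectivity of the composite through the square forces surjectivity of the real forgetful map), and has the mild advantage of producing the cleaner intermediate statement that $\KO_G^0\otimes\qq\to\KO^0\otimes\qq$ is surjective. One small point: your final citation of \Cref{surjectivity-of-forgetful-map-in-Kgroups} is slightly imprecise, since that proposition is stated for complex bundles and for global surjectivity rather than a single element; what you are really invoking is the argument of \Cref{rational-surjectivity-of-forgetful-map-in-Kgroups} (or its obvious real analogue), and it would be cleaner to cite that directly.
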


As explained in \Cref{EX:cohomo_1_codim_2_orbits,REM:stable_classes}, for any space $G/H$ as in \Cref{THM:double_stabilization_homogeneous} satisfying additionally $\oplus_{i>0}\H^{4i}(G/H;\qq)\neq 0$, there exist infinitely many real vector bundles which admit non-negatively curved metrics and belong to pairwise distinct stable classes.

\smallskip

In the case of cohomogeneity one spaces \(M\) with all groups in the associated diagram \((G,H,K_-,K_+)\) connected and $\dim K_\pm/H$ odd (which implies $\rank K_- = \rank K_+$), we show that the map $\K^0_G(M)\otimes\qq\to \K^0(M)\otimes\qq$ is surjective if and only if $\rank G - \rank K_\pm\leq 1$, see \Cref{THM:surjectivity_of_forgetful_map_even_cohomology}. As a consequence we obtain the following statement, which in combination with the results on the existence of non-negatively curved metrics implies Part~(2) of \Cref{THM: big theorem}.

\begin{main}\label{THM:rational_stabilization_cohomo1}
 Let \(M\) be a closed connected manifold with an action by a compact connected Lie group $G$ with orbit space an interval, and let \((G,H,K_-,K_+)\) be the associated group diagram. Assume that the isotropy groups $H, K_\pm$ are connected and that $\dim K_\pm/H$ is odd. Suppose moreover that $\rank G - \rank K_\pm\leq 1$. Then for every real or complex vector bundle \(E\) over \(M\), there exist integers \(q>0\) and \(k\geq 0\) such that the Whitney sum \(qE\oplus \rr^k\) or \(qE\oplus \cc^k\) carries a $G$-vector bundle structure.
\end{main}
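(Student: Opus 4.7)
The plan is to reduce the statement to a rational surjectivity result for the forgetful map on equivariant K-theory, and then to extract the integer stabilisation claim by clearing denominators. For the complex case, by \Cref{surjectivity_of_Heven_iff_K0}, the surjectivity of the forgetful map $\K^0_G(M)\otimes\qq \to \K^0(M)\otimes\qq$ is equivalent, via the Chern character, to the surjectivity of the Borel forgetful map $\H^\even_G(M;\qq) \to \H^\even(M;\qq)$. Since $\dim K_\pm/H$ odd forces $\rank K_- = \rank K_+$, the hypothesis $\rank G - \rank K_\pm \leq 1$ is precisely what is needed to invoke \Cref{THM:surjectivity_of_forgetful_map_even_cohomology}, which then delivers the cohomological surjectivity and hence the rational K-theoretic surjectivity in the complex case.

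For the real case, I would combine the above with the surjectivity of the rational realification $\K^0(M)\otimes\qq \to \KO^0(M)\otimes\qq$ supplied by \Cref{prop:rational_realification_ir_surjective}. Since realification commutes with the forgetful map, there is a commutative square of the four rationalised K-theories, and the surjectivity of the composite $\K^0_G(M)\otimes\qq \to \K^0(M)\otimes\qq \to \KO^0(M)\otimes\qq$ forces the real forgetful map $\KO^0_G(M)\otimes\qq \to \KO^0(M)\otimes\qq$ to be surjective as well.

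It then remains to pass from rational to integer surjectivity on the level of bundles. Given a complex vector bundle $E$ over $M$, rational surjectivity provides some $\xi \in \K^0_G(M)$ and some $q > 0$ whose forgetful image agrees with $q[E]$ modulo the (finite) torsion subgroup of $\K^0(M)$; if that torsion has exponent $N$, then after replacing $\xi$ and $q$ by $N\xi$ and $Nq$ we may assume the image of $\xi$ equals $q[E]$ in $\K^0(M)$ itself. Writing $\xi = [F_+] - [F_-]$ as a formal difference of $G$-vector bundles yields a non-equivariant stable isomorphism $qE \oplus F_- \oplus \cc^k \cong F_+ \oplus \cc^k$ for some $k \geq 0$. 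Embedding $F_-$ as a summand of a trivial $G$-bundle $M \times V$ for a suitable complex $G$-representation $V$ (available because $M$ is a compact $G$-space) produces a complementary $G$-vector bundle $F_-'$ with $F_- \oplus F_-' \cong M \times V$ as $G$-vector bundles. Consequently, $qE \oplus \cc^{k+\dim V}$ inherits a $G$-vector bundle structure via its non-equivariant isomorphism with the $G$-vector bundle $F_+ \oplus F_-' \oplus \cc^k$, as required. The real case proceeds identically with $\KO^0_G$ in place of $\K^0_G$. The heart of the argument is the cohomological input of \Cref{THM:surjectivity_of_forgetful_map_even_cohomology} together with its Chern-character translation to K-theory; once those are in hand, the denominator-clearing step above is essentially formal, although one must track the stabilisation constants $q$ and $k$ carefully.
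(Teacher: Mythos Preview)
Your proof is correct and follows essentially the same route as the paper: cohomological surjectivity via \Cref{THM:surjectivity_of_forgetful_map_even_cohomology}, translation to rational K-theory via \Cref{surjectivity_of_Heven_iff_K0}, and then a denominator-clearing stabilisation argument. The only difference is that the paper packages the entire final step (for both real and complex bundles) into \Cref{rational-surjectivity-of-forgetful-map-in-Kgroups} and simply cites it, whereas you re-derive that proposition inline; your handling of the real case via surjectivity of the rational real forgetful map is a minor variant of the paper's approach, which instead realifies the complex $G$-bundle produced by the complex case.
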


We finish by providing some context to the study of the map $\K^0_G(M)\otimes\qq\to \K^0(M)\otimes\qq$. The more general map $\K^*_G(M)\otimes\qq\to \K^*(M)\otimes\qq$ has been investigated and its surjectivity is known to be equivalent to the surjectivity of the map in equivariant cohomology $j^*\colon\H_G^*(M;\qq )\to\H^*(M;\qq )$. Here $j$ is the pull-back of the fibre inclusion of the Borel fibration of a $G$-space $M$:
\[
  M\hto{j} M_G:= M\times_G \E G \xto{p} \B G.
\]
When $j^*$ is surjective, the $G$-action on $M$ is called ``equivariantly formal''. This property reverberates heavily in equivariant cohomology and can be found in different contexts ranging from Hamiltonian torus actions to $G$-actions on simply-connected K\"ahler manifolds.

In this work we show, via the Chern character, that the surjectivity of $\K^0_G(M)\otimes\qq\to \K^0(M)\otimes\qq$ is equivalent to the surjectivity of the Borel forgetful map in even degrees $j^*\colon\H_G^\even(M;\qq )\to \H^\even(M;\qq )$, see \Cref{surjectivity_of_Heven_iff_K0}.
We then study the surjectivity of the pull-backs in even-degree rational cohomology under the maps $j$ and $p$ for homogeneous spaces (and more generally for biquotients) and for cohomogeneity one spaces.
The common feature of these classes of spaces is that they are pure in the sense of rational homotopy theory, a property that will allow us to characterize the surjectivity of the maps we are interested in.



\subsection*{Structure of the article}
In \Cref{SEC:cohomo1}, we review the basic theory of cohomogeneity one manifolds and the existence of non-negatively curved metrics.
In \Cref{SEC:K-theory}, we assemble all K-theoretic background and results.  The first half of this section leads up to a proof of Part~(\ref{THM: converse soul for cohomo 1}) of \Cref{THM: big theorem} in \Cref{SUBSEC:stabilization-over-cohomogeneity-one-spaces}.  In the second half of \Cref{SEC:K-theory}, we relate the forgetful map in rational genuine equivariant K-theory to the forgetful map in rational (Borel) equivariant cohomology. This is the first step in our proof of \Cref{THM:rational_stabilization_cohomo1}.
Finally, in \Cref{sec:cohomo1_rational_cohomo} we study the rational cohomology of homogeneous spaces, biquotients and cohomogeneity one manifolds.  This leads to the proofs of Theorems~\ref{THM:surjectivity of homogeneous bundles}, \ref{THM:double_stabilization_homogeneous} and \ref{THM:rational_stabilization_cohomo1} and, finally, of Part~(\ref{THM: rational converse soul for cohomo 1}) of \Cref{THM: big theorem}.

\smallskip

\subsection*{Acknowledgements.} The first named author was supported both by a Heisenberg grant and his research grant AM 342/4-1 of the German Research Foundation; he is moreover associated to the DFG Priority Programme 2026 (``Geometry at Infinity''). The second author received support from SNF grant 200021E-172469, the DFG Priority Programme SPP 2026 (``Geometry at Infinity''), and MINECO grant MTM2017-85934-C3-2-P. The third author is a member of the DFG Research Training Group 2240: Algebro-Geometric Methods in Algebra, Arithmetic and Topology, and parts of the research for this article where conducted within this framework.

We thank Jason DeVito for bringing the example in \Cref{EX:cohomo_1_codim_2_orbits} to our attention, and Anand Dessai, Luis Guijarro and Jeff Carlson for many helpful comments on a preliminary version of this manuscript.

We are grateful to the referees for their careful study of the article and for several helpful suggestions.


\section{Manifolds of cohomogeneity at most one and non-negative curvature}\label{SEC:cohomo1}

This section is organized as follows: in \Cref{sec:prelim_cohomo1} we recall some basics on cohomogeneity one manifolds, in \Cref{SEC:G_vector_bundles_nonnegative_curvature_cohomo1} we show the existence of non-negatively curved metrics on $G$-vector bundles, in \Cref{SS:DeVito_example} we provide examples of cohomogeneity one manifolds which are not homotopy equivalent to any homogeneous space and in \Cref{SEC:orbit_space_circle} we discuss the case where the orbit space is a circle.

\subsection{Preliminaries}\label{sec:prelim_cohomo1}

A manifold $M$ on which a Lie group $G$ acts is said to be of cohomogeneity one if the orbit space $M/G$ is of dimension one. Cohomogeneity one manifolds were investigated by Mostert \cite{Mo57}. When $M$ is compact the orbit space can only be a circle or a closed interval. For the case where the orbit space is a circle see \Cref{SEC:orbit_space_circle}. Here we discuss the case where $M/G$ is a closed interval, say $[-1,1]$. There are two non-principal orbits which project to the endpoints of the interval. Denote by $K_\pm$ the isotropy group at the respective point of the orbit projecting to $\pm 1$, and by $H$ the principal isotropy group. Thus the non-principal orbits equal $G/K_\pm$ and the principal orbit equals $G/H$.

The invariant neighbourhoods of $G/K_\pm$ projecting to $[-1,0]$ and $[0,1]$ respectively can be described as $G$-disk bundles $G\times_{K_\pm} \mathbb{D}^{\ell_\pm +1}$. Here $\mathbb{D}^{\ell_\pm +1}$ denotes the normal unit disk to $G/K_\pm$ at any of its points. The group $K_\pm$ acts transitively on the boundary of $\mathbb{D}^{\ell_\pm +1}$ with isotropy group $H$, hence the action is linear and we have $\partial \mathbb{D}^{\ell_\pm +1} = \mathbb{S}^{\ell_\pm} = K_\pm /H$. (Note that the codimension of the non-principal orbit $G/K_\pm$ in $M$ equals $\ell_\pm +1=\dim (K_\pm/H) +1$.) Thus the manifold $M$ can be decomposed as the gluing of the disk bundles $G\times_{K_\pm} \mathbb{D}^{\ell_\pm +1}$ along their common boundary $G/H$:
\begin{equation}\label{EQ: cohomo 1 decomposition}
    M= G\times_{K_-} \mathbb{D}^{\ell_- +1} \cup_{G/H} G\times_{K_+} \mathbb{D}^{\ell_+ +1}
\end{equation}
Conversely, let $G$ be a compact Lie group and let $H<K_\pm<G$ be closed subgroups with $K_\pm /H$ a sphere $\mathbb{S}^{\ell_\pm}$. Then one can construct a manifold $M$ as in \eqref{EQ: cohomo 1 decomposition}, on which $G$ acts in a natural way.

The tuple \((G,H,K_-,K_+)\)  of groups as above is said to be the \define{group diagram} of the corresponding cohomogeneity one manifold. Note that the tuple and the associated manifold depend not just on the abstract groups \(H\) and \(K_{\pm}\), but also on their embeddings in \(G\).

There is a \(1:1\)-correspondence between \(G\)-equivariant diffeomorphism classes of cohomogeneity one manifolds on which $G$ acts and certain equivalence classes of \emph{group diagrams} \cite[Proposition~6.37]{AB:Lie}.  We refer to \cite[Section~6.3]{AB:Lie}, \cite{GaGaZa18} or \cite[Section~1]{GroveZiller:Milnor} for details.


\subsection{Non-negative curvature}\label{SEC:G_vector_bundles_nonnegative_curvature_cohomo1}

Here we review the existence of non-negatively curved metrics on $G$-vector bundles over homogeneous spaces and certain cohomogeneity one manifolds (see \Cref{SUBSEC:G-vector-bundles} for the definition of $G$-vector bundle).

\begin{theo}\label{hop:non-negative-metrics}
  Let $M$ be a closed connected manifold with an action by a compact Lie group $G$. Suppose that one of the following holds:
  \begin{enumerate}
      \item the $G$-action on $M$ is transitive, i.e.\ $M$ is homogeneous,
      \item the $G$-action is of cohomogeneity one with orbit space an interval, and the non-principal orbits have codimension $\leq 2$ in $M$.
  \end{enumerate}
  Then the total space of every real $G$-vector bundle over $M$ carries a $G$-invariant metric of non-negative curvature.
\end{theo}

\begin{rem}
\Cref{hop:non-negative-metrics} is stated for real $G$-vector bundles but of course equally applies to complex $G$-vector bundles; one can simply forget the complex structure.
\end{rem}

\begin{proof}[Proof of \Cref{hop:non-negative-metrics}]

Part~(1) is well known, we recall here the details. A closed homogeneous $G$-space can be written as $G/H$ with $G$ compact. Any $G$-vector bundle over $G/H$ is isomorphic to the quotient $G\times_H V:=(G\times V)/H$, where $V$ is a representation of $H$ and $H$ acts diagonally on $G\times V$, see \cite[p.~130]{Segal:Equivariant}. Take the product of a bi-invariant metric on $G$ and an $H$-invariant flat metric on the vector space underlying to $V$. This a non-negatively curved metric on $G\times V$ which is invariant under the action of $H$. Thus it induces a metric on the quotient $G\times_H V$ which is $G$-invariant by construction and of non-negative curvature by the Gray--O'Neill formula for Riemannian submersions \cite{Gr67,On66}.

\smallskip

Let now $M$ be a cohmogeneity one manifold as in the statement. Let $E$ be a $G$-vector bundle, and denote its rank by $m$. Endow $E\to M$ with an \emph{orthogonal} $G$-vector bundle structure (see \Cref{LEM:exisntece_of_orthogonal_Gbundle_structure}), thus the structure group of $E$ can be reduced from $GL(m)$ to $O(m)$. The associated principal $O(m)$-bundle $P\to M$ inherits a compatible $G$-action; more precisely, $P$ has a right free $O(m)$-action and a left $G$-action and these actions commute  \cite[p. 257]{Lashof:equivariant}.

Thus $P$ has an action of $O(m)\times G$, and the orbit space $P/\left(O(m)\times G\right)$ is by construction equivalent to $M/G = [-1,1]$, i.e.\ it is of cohomogeneity one. Since $O(m)$ acts freely on $P$ it follows that the diagram of the cohomogeneity one action equals \((O(m)\times G,H,K_-,K_+)\). In particular, the non-principal orbits in $P$ have the same codimension as those in $M$, thus $P$ carries an $O(m)\times G$-invariant metric $g_P$ of non-negative curvature by the existence result of Grove and Ziller \cite[Theorem~E and Remark 2.8]{GroveZiller:Milnor}.

Let $V$ denote the standard representation of $O(m)$ and endow it with a flat $O(m)$-invariant metric $g_V$. By construction, the associated bundle $P\times_{O(m)} V$ is a $G$-vector bundle (the action given by $g[p,v]=[gp,v]$) which is isomorphic to the original bundle $E$. We finish by constructing adequate metrics as in the homogeneous case above. The product metric $g_P + g_V$ on $P\times V$ is $O(m)$-invariant and of non-negative curvature, thus it descends to a metric on $E=P\times_{O(m)} V$ which is $G$-invariant by construction and of non-negative curvature by the Gray--O'Neill formula \cite{Gr67,On66}.

We refer to \cite[Proposition 7.1]{Hoel2010} for a detailed description of the general construction above in the case where $P$ equals the frame bundle of $M$, which is a principal $SO(n)$-bundle (where $n=\dim M$) and whose associated vector bundle $P\times_{SO(n)}V$ is the tangent bundle $TM$.
\end{proof}


\subsection{Non-homogeneous cohomogeneity one manifolds}\label{SS:DeVito_example}

Here we justify the claims in the second paragraph of \Cref{EX:cohomo_1_codim_2_orbits}.

For $n\geq 0$, we define the product manifold $M^{2n+4}:= (\cc P^2\sharp \overline{\cc P}^2 )\times\left(\mathbb{S}^2\right)^{n}$. 
The factor $\cc P^2\sharp \overline{\cc P}^2 $ admits a number of cohomogeneity one actions with associated group diagrams $(SU(2),\zz_k,S^1,S^1)$, for any $k$ odd (see \cite[(2.3)]{GroveZiller:Lifting} for details). Each factor $\mathbb{S}^2$ carries the standard transitive action by $SU(2)$ with principal isotropy group $S^1$. These actions induce a product action on the whole $M^{2n+4}$ by $SU(2)^{n+1}$ which is of cohomogeneity one with associated group diagram
$$\left( SU(2)^{n+1},\zz_k\times (S^1)^{n}, S^1\times (S^1)^{n}, S^1\times (S^1)^{n}\right)$$
All these actions have codimension two singular orbits  and the isotropy groups associated to the singular orbits have maximal rank. However, in order to apply Part~(1) of \Cref{THM: big theorem} we need all isotropy groups to be connected, thus we consider the action corresponding to $k=1$. We summarize the discussion in the following proposition, where $T^n:=(S^1)^{n}$ denotes the $n$-dimensional torus.

\begin{prop}\label{PROP:THMA_applies_to_M_2n4}
   For any $n\geq 0$ the manifold $M^{2n+4}$ admits a cohomogeneity one action with associated diagram $\left( SU(2)^{n+1},T^{n}, T^{n+1}, T^{n+1}\right)$. In particular, Part~(1) of \Cref{THM: big theorem} applies to $M^{2n+4}$.
\end{prop}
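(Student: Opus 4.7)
The plan is to construct the desired action explicitly as a product, read off the group diagram, and then verify the hypotheses of Part~(1) of \Cref{THM: big theorem} one by one.

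For the first step, I would specialise the family of cohomogeneity one actions cited from \cite[(2.3)]{GroveZiller:Lifting} on $\cc P^2\sharp\overline{\cc P}^2$ to the value $k=1$, yielding an action of $SU(2)$ with group diagram $(SU(2),\{e\},S^1,S^1)$ and orbit space an interval. On each $\s^2$ factor I would take the standard transitive $SU(2)$-action, viewing $\s^2=SU(2)/S^1$; this contributes a single orbit whose isotropy is $S^1$ at the chosen basepoint. Taking the product of these $n+1$ actions gives an $SU(2)^{n+1}$-action on $M^{2n+4}$.

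For the second step I would note two general facts for product actions: the orbit space of a product action is the product of orbit spaces, and the isotropy at a product point is the product of isotropies. The orbit space is therefore $[-1,1]\times\{\text{pt}\}^n=[-1,1]$, so the action is of cohomogeneity one with orbit space an interval. The principal isotropy is $\{e\}\times(S^1)^n = T^n$, and the isotropies of the two singular orbits are each $S^1\times(S^1)^n=T^{n+1}$. This gives the asserted group diagram $\bigl(SU(2)^{n+1},T^n,T^{n+1},T^{n+1}\bigr)$.

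For the final step I would verify the hypotheses of Part~(1) of \Cref{THM: big theorem}: connectedness of all isotropy groups is clear since tori are connected; codimension two of the singular orbits amounts to $K_\pm/H=T^{n+1}/T^n\cong S^1$, which is immediate; the rank condition is $\rank SU(2)^{n+1}=n+1=\rank T^{n+1}$; and $\pi_1(SU(2)^{n+1})=0$ is torsion-free. The only slightly delicate point, and the one I would treat most carefully, is that the tori $K_\pm=T^{n+1}$ satisfy Steinberg's assumptions \eqref{eq:Steinbergs-assumptions}; this I would verify by the observation that for a torus $T$ the maximal torus is $T$ itself and the Weyl group is trivial, so the relevant freeness and rank conditions on the representation ring collapse to tautologies. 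With all assumptions in place, Part~(1) of \Cref{THM: big theorem} applies.

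The main (and essentially only) obstacle is this last verification of Steinberg's conditions for a torus, since the statement of the main theorem only advertises the simply-connected case as a sufficient example; everything else is bookkeeping about product actions.
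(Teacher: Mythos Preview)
Your approach is correct and essentially identical to the paper's: the paper also constructs the action as the product of the $k=1$ member of the family from \cite[(2.3)]{GroveZiller:Lifting} with the standard transitive $SU(2)$-actions on the sphere factors, and reads off the group diagram from the product structure. The paper leaves the verification of the hypotheses of Part~(1) of \Cref{THM: big theorem} implicit, whereas you spell them out; note that for the Steinberg condition \eqref{eq:Steinbergs-assumptions} on a torus the most direct check is simply that the commutator subgroup of a torus is trivial (hence vacuously an empty product of the required form), rather than the Weyl-group argument you sketch.
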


Next we compare the homotopy type of $M^{2n+4}$ with that of closed homogeneous spaces. The goal is to prove the following

\begin{prop}\label{PROP:M_2n4_isnot_homogeneous}
  The manifold $M^{2n+4}$ is not homotopy equivalent to any homogeneous space for any $n\geq 0$.
\end{prop}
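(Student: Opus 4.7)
My plan is to argue that if \(M^{2n+4}\) were homotopy equivalent to a closed homogeneous space, then rational cohomology would force that space to be diffeomorphic to \((\s^2)^{n+2}\), after which an elementary mod-$2$ invariant separates \(M^{2n+4}\) from this product.

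Suppose for contradiction that \(M^{2n+4} \simeq G/H\) with \(G\) a compact Lie group. Since \(\cc P^2\sharp\overline{\cc P}^2\) is simply-connected (connected sums of simply-connected manifolds of dimension \(\geq 3\) are simply-connected by van Kampen), so is \(M^{2n+4}\); passing to the identity component of \(G\) and using the long exact sequence of \(H \to G \to G/H\) forces both \(G\) and \(H\) to be connected. From \(\chi(M^{2n+4}) = 2^{n+2} > 0\) and Hopf's theorem, \(\rank G = \rank H\).

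Rationally, \(\cc P^2\sharp\overline{\cc P}^2 \simeq_\qq \s^2\times\s^2\): their cohomology rings are isomorphic over \(\qq\) because the forms \((+1)\oplus(-1)\) and the hyperbolic plane are equivalent binary forms over \(\qq\) (both isotropic of discriminant \(-1\)), and simply-connected $4$-manifolds are formal. Hence \(M^{2n+4} \simeq_\qq (\s^2)^{n+2}\). Borel's presentation of the rational cohomology of a maximal-rank homogeneous space yields the Poincar\'e series identity
\[
  \prod_i \frac{1 - t^{2 d_i^G}}{1 - t^{2 d_i^H}} = (1 + t^2)^{n+2},
\]
where \(d_i^G, d_i^H\) are the fundamental degrees of \(W_G, W_H\); term-by-term matching forces all \(d_i^H = 1\) and all \(d_i^G = 2\). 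The first condition says \(H\) is a maximal torus of \(G\); the second, combined with the classification of compact simple Lie algebras by their fundamental degrees, identifies the Lie algebra of \(G\) as \(\mathfrak{su}(2)^{n+2}\). Consequently, regardless of isogeny class, \(G/H \cong (\s^2)^{n+2}\) as a smooth manifold.

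To contradict \(M^{2n+4} \simeq (\s^2)^{n+2}\) I would invoke the second Stiefel-Whitney class: by Wu's formula, \(w_2\) of a closed smooth manifold is determined by its cohomology ring together with the Steenrod squares and is therefore a homotopy invariant. Now \((\s^2)^{n+2}\) is spin, while \(M^{2n+4}\) is not, since \(\cc P^2\sharp\overline{\cc P}^2\) has non-trivial \(w_2\) (its integral intersection form being of odd type), and this non-triviality survives on the product. Equivalently, in \(\H^*(M^{2n+4};\zz)\) the class \(a\) coming from \(\H^2(\cc P^2\sharp\overline{\cc P}^2)\) satisfies \(a^2 \notin 2\H^4\), whereas every square lies in \(2\H^4\) inside \(\H^*((\s^2)^{n+2};\zz) = \zz[t_1,\ldots,t_{n+2}]/(t_i^2)\). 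The main obstacle is the Borel-theoretic classification in the third paragraph: one has to rule out all other maximal-rank pairs \((G,H)\) compatible with the target Poincar\'e series \((1+t^2)^{n+2}\) and to check that every admissible pair does yield \(G/H \cong (\s^2)^{n+2}\) as a manifold, independent of the chosen isogeny class.
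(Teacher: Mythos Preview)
Your two-step strategy—first pin the homogeneous candidate down to \((\s^2)^{n+2}\), then separate it from \(M^{2n+4}\) by a mod-\(2\) invariant—is exactly the paper's. Your final step coincides with the paper almost verbatim: the paper's primary argument is the observation that every square in \(\H^2((\s^2)^{n+2};\zz)\) is even while the class coming from \(\H^2(\cc P^2\sharp\overline{\cc P}^2)\) squares to an odd class, and the paper's alternative argument is precisely your \(w_2\)-argument.

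The difference, and the real gap, is in the classification step. Your claim that ``term-by-term matching forces all \(d_i^H=1\) and all \(d_i^G=2\)'' is false as stated: take \(G=SU(2)^{n+3}\) and \(H=T^{n+2}\times SU(2)\) (the \(SU(2)\) sitting in one factor). Then the degrees are \((2,\ldots,2)\) and \((1,\ldots,1,2)\), the Poincar\'e series is still \((1+t^2)^{n+2}\), and yet neither condition holds. You are right that in this example one still lands on \((\s^2)^{n+2}\), but the point is that the Poincar\'e series alone does not isolate the pair \((G,H)\); one must first normalize so that no simple factor of \(G\) is contained in \(H\). The paper handles this by invoking Totaro's reduction lemma (assume \(G\) simply-connected and \(H\) not transitive on any simple factor), then computing the \emph{rational homotopy groups} of \(M^{2n+4}\) (rather than the Poincar\'e series) and citing a case of DeVito's classification to conclude \((G,H)=(SU(2)^{n+2},T^{n+2})\). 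So your acknowledged ``main obstacle'' is genuine, and the paper resolves it by appeal to existing classification results rather than by a direct Poincar\'e-series argument; completing your route would require essentially reproving that case of DeVito's theorem.
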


\Cref{PROP:M_2n4_isnot_homogeneous} follows by combining \Cref{PROP:the_only_homogeneous_is_S2,PROP:M_is_not_homotopy_equivalent_S2} below.

\begin{lemma}\label{PROP:the_only_homogeneous_is_S2}
  Up to diffeomorphism, the only simply-connected closed homogeneous space with the rational homotopy groups of $M^{2n+4}$ is $(\mathbb{S}^2)^{n+2}$.
\end{lemma}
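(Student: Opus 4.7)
I would compute the rational homotopy groups of $M^{2n+4}$ by first showing that $\cc P^2\sharp\overline{\cc P}^2 \simeq_\qq \mathbb{S}^2\times\mathbb{S}^2$: over $\qq$ the intersection form $\langle 1 \rangle \oplus \langle -1 \rangle$ is hyperbolic, so a change of basis in $H^2$ exhibits $H^*(\cc P^2\sharp\overline{\cc P}^2;\qq)$ as an exterior algebra on two degree-$2$ generators, matching $H^*(\mathbb{S}^2\times\mathbb{S}^2;\qq)$; as both spaces are simply-connected, formal, and $4$-dimensional, they are rationally equivalent. Taking products then gives $M^{2n+4}\simeq_\qq (\mathbb{S}^2)^{n+2}$, so $\pi_2(M^{2n+4})\otimes\qq=\qq^{n+2}$, $\pi_3(M^{2n+4})\otimes\qq=\qq^{n+2}$, and $\pi_{\geq 4}(M^{2n+4})\otimes\qq=0$.

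Now let $N=G/H$ be a simply-connected closed homogeneous space with these rational homotopy groups. I would reduce to the case where $G$ is compact simply-connected (hence semisimple) and $H$ is closed and connected, and then apply the long exact rational homotopy sequence of the fibration $H\to G\to N$. This yields three facts: (i) $\dim Z(H)^0=n+2$, so the central torus of $H$ has dimension $n+2$; (ii) the inclusion $[H,H]\hookrightarrow G$ induces an isomorphism on $\pi_j\otimes\qq$ for every odd $j\geq 5$; (iii) in degree $3$ this induced map is injective with $(n+2)$-dimensional cokernel. Decomposing both $G$ and $[H,H]$ into simple factors and using that $SU(2)$ is the only simply-connected compact simple Lie group with no rational homotopy in degrees $\geq 5$, fact (ii) forces a bijection between the rank $\geq 2$ simple factors of $G$ and those of $[H,H]$ in which each factor is identified via the inclusion, while (iii) forces $G$ to have exactly $n+2$ additional $SU(2)$-factors.

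The crucial remaining step is to promote these rational identifications to a genuine direct product decomposition of $G$. The central torus $T^{n+2}\subset H$ commutes with $[H,H]$, hence lies in the centralizer $Z_G([H,H])$. Using the rigidity that any non-trivial homomorphism between compact simple Lie groups is surjective with finite central kernel, one computes the rank of $Z_G([H,H])$ under each possible embedding pattern and finds that any diagonal embedding of a simple factor of $[H,H]$ across several simple factors of $G$ would cut the rank of the centralizer below $n+2$. Hence the inclusion $[H,H]\hookrightarrow G$ must respect the simple factor decomposition, and $G$ literally splits as $G=[H,H]\times SU(2)^{n+2}$, with $T^{n+2}$ forced to be the maximal torus of the $SU(2)^{n+2}$-factor. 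It follows that $H=[H,H]\times T^{n+2}$ and $N\cong SU(2)^{n+2}/T^{n+2}\cong (\mathbb{S}^2)^{n+2}$.

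The main obstacle is precisely this last splitting argument: ruling out diagonal or otherwise twisted embeddings of semisimple subgroups into the product of simple factors of $G$. The rank-of-centralizer bookkeeping sketched above is what forces an honest direct product, and without it one would only obtain a rational equivalence $G\simeq_\qq [H,H]\times SU(2)^{n+2}$, which is not sufficient to identify $N$ with $(\mathbb{S}^2)^{n+2}$ up to diffeomorphism.
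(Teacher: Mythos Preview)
Your computation of the rational homotopy groups of $M^{2n+4}$ is correct and essentially parallel to the paper's, though you argue via the rational intersection form and formality while the paper writes down a minimal Sullivan model directly.

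For the classification step, the paper does not argue directly: it invokes Totaro's reduction (so that $G$ is simply-connected, $H$ connected, and $H$ acts nontransitively on each simple factor of $G$) and then cites DeVito's classification \cite[Theorem~3.1, Case~1]{Dev17}, which already shows that any such $G/H$ with the rational homotopy of $(\mathbb{S}^2)^{n+2}$ has $(G,H)=(SU(2)^{n+2},T^{n+2})$. Your proposal instead attempts to reprove this classification from scratch via the long exact sequence and a centralizer-rank argument. The overall strategy is the one underlying Totaro's and DeVito's work, and your extraction of facts (i)--(iii) from the homotopy sequence is correct.

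There is, however, a genuine error in your splitting argument. The asserted ``rigidity that any non-trivial homomorphism between compact simple Lie groups is surjective with finite central kernel'' is false: for instance the standard inclusion $SU(2)\hookrightarrow SU(3)$, or any irreducible $SU(2)\to SU(n)$, is neither surjective nor has central kernel in the relevant sense. Without this claim, your centralizer-rank bookkeeping does not go through as stated, and you have not actually ruled out twisted embeddings (e.g.\ an $SU(2)$-factor of $[H,H]$ sitting diagonally across an $SU(2)$-factor and a rank~$\geq 2$ factor of $G$). Even the preliminary assertion that fact~(ii) forces each rank~$\geq 2$ simple factor of $[H,H]$ to map isomorphically onto a simple factor of $G$ requires more than matching $\pi_j\otimes\qq$ for $j\geq 5$; one needs the sort of case analysis carried out in \cite{Totaro:Cheeger}, together with the nontransitivity hypothesis on each simple factor (which you have not invoked). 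In short, you have correctly identified the main obstacle, but the tool you propose for overcoming it is not valid, and filling the gap essentially amounts to redoing the cited work of Totaro and DeVito.
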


\begin{proof}
In order to prove \Cref{PROP:the_only_homogeneous_is_S2} one draws on the classification tools developed by Totaro for biquotients in his article \cite{Totaro:Cheeger}---for the definition of the latter see \Cref{secpre}; note, however, that Totaro's definition is slightly more general.

The first observation we make is that, the rational homotopy groups of $\cc P^2\sharp \overline{\cc P}^2$ are given by  $\pi_i (\cc P^2\sharp \overline{\cc P}^2)\otimes \qq= \qq^2$ for $i\in \{2,3\}$ and are trivial otherwise. That is, this space has the same rational homotopy groups as $\s^2\times \s^2$. (Indeed, for example, from its cohomological structure we can directly derive a minimal Sullivan model of $\cc P^2\sharp \overline{\cc P}^2$ as being given by $(\Lambda \langle x,y,n,m\rangle ,x\mapsto 0,y\mapsto 0,n\mapsto x^2+y^2, m\mapsto xy)$ with $\deg x=\deg y=2$ and $\deg n=\deg m=3$; see \Cref{secpre} for a discussion on rational homotopy theory). Hence the rational homotopy groups of $M^{2n+4}$  equal
  \[
   \pi_i (M^{2n+4}) \otimes \qq =\pi_i ((\s^2)^{n+2}) \otimes \qq =
  \begin{cases}
    \qq^{n+2}, & \text{for } i=2,3 \\
    0, & \text{otherwise}
  \end{cases}
  \]
 Let $G/H$ be a simply-connected closed homogeneous space with the rational homotopy groups of $M^{2n+4}$. In particular, $G/H$ is a biquotient. Since $G/H$ is simply-connected and since we only care about its diffeomorphism type, we can assume by \cite[Lemma 3.1]{Totaro:Cheeger} that $G$ is simply-connected and that $H$ is connected and does not act transitively on any simple factor of $G$ (note that, unlike in the case of a general biquotient where $H$ is a subgroup of $G\times G$, in our case $H$ is a closed subgroup of $G$).
Using Totaro's classification it was shown in Case 1 of the proof of \cite[Theorem 3.1]{Dev17} that if such a homogeneous space $G/H$ has the rational homotopy groups of $(\s^2)^{n+2}$, then $(G,H)=(SU(2)^{n+2},T^{n+2})$. The group $H$ hence is a maximal torus of $G$. Since all maximal tori are conjugated, this conjugation induces a diffeomorphism of $G/H$ and $(\s^2)^{n+2}=(SU(2)/S^1)^{n+2}$, which proves Lemma \ref{PROP:the_only_homogeneous_is_S2}.
\end{proof}



\begin{lemma}\label{PROP:M_is_not_homotopy_equivalent_S2}
  The manifold $M^{2n+4}$ is not homotopy equivalent to $(\mathbb{S}^2)^{n+2}$.
\end{lemma}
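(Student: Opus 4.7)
The plan is to distinguish $M^{2n+4}$ and $(\s^2)^{n+2}$ by an invariant of their mod $2$ cohomology rings: a homotopy equivalence would induce an isomorphism between these rings. The discriminating invariant will be the cup-square map $\H^2(-;\zz/2)\to \H^4(-;\zz/2)$; I will verify that this vanishes identically on $(\s^2)^{n+2}$ but is non-trivial on $M^{2n+4}$.

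First, for $N = (\s^2)^{n+2}$, Künneth gives $\H^*(N;\zz/2) = (\zz/2)[y_1,\ldots,y_{n+2}]/(y_i^2)$ with $\deg y_i = 2$. For any class $y = \sum_i \alpha_i y_i \in \H^2(N;\zz/2)$,
\[
  y^2 = \sum_i \alpha_i^2 y_i^2 + 2\sum_{i<j} \alpha_i\alpha_j y_iy_j = 0,
\]
since the first sum vanishes by $y_i^2 = 0$ and the second vanishes because $2 = 0$ in characteristic two. Hence no degree-$2$ class on $N$ squares to something nonzero.

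Next, for $M^{2n+4}$, the factor $\cc P^2\sharp\overline{\cc P}^2$ has intersection form $\langle 1\rangle\oplus\langle -1\rangle$, so its mod $2$ cohomology has $\zz/2$-basis $\{1,a,b,a^2\}$ in degrees $0,2,2,4$ with $a^2 = b^2\neq 0$ and $ab = 0$. By Künneth, the class $a\otimes 1\otimes\cdots\otimes 1 \in \H^2(M^{2n+4};\zz/2)$ squares to the nonzero class $a^2\otimes 1\otimes\cdots\otimes 1$, producing the required obstruction and hence the proof.

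There is no genuinely hard step; the only subtlety to flag is that one must pass to mod $2$ coefficients. Rationally, the factor $\cc P^2\sharp\overline{\cc P}^2$ has the same cohomology ring as $\s^2\times\s^2$, because a rational change of basis diagonalises the intersection form into a hyperbolic pair; consequently $M^{2n+4}$ and $(\s^2)^{n+2}$ have isomorphic rational cohomology rings (consistent with the agreement of rational homotopy groups already used in \Cref{PROP:the_only_homogeneous_is_S2}), so no distinction can be drawn rationally. The odd intersection form of $\cc P^2\sharp\overline{\cc P}^2$ is precisely what survives at the prime~$2$ and what separates the two homotopy types.
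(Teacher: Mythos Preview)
Your proof is correct and follows essentially the same idea as the paper's: both distinguish the spaces via the cup-square map on $\H^2$, exploiting that the intersection form of $\cc P^2\sharp\overline{\cc P}^2$ is odd while that of $\s^2\times\s^2$ is even. The only cosmetic difference is that the paper works integrally (showing every square in $\H^4((\s^2)^{n+2};\zz)$ is divisible by~$2$, whereas $\bar y^2$ is not), while you reduce mod~$2$ from the outset; these are equivalent formulations of the same obstruction.
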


\begin{proof}
To prove this result we look at squares of elements in $\H^2(-;\zz)$. Any element in $\H^2((\mathbb{S}^2)^{n+2};\zz)$ is of the form $\sum_i^{n+2} a_ix_i$, where $a_i\in \zz$ are arbitrary and $x_i\in \H^2(\mathbb{S}^2;\zz)$ coming from the $i$-th factor in $(\mathbb{S}^2)^{n+2}$. The square of this arbitrary element can be computed as follows, where we shall use the fact that the squares $x_i^2$ vanish:
$$
\left(\sum_i^{n+2} a_ix_i\right)^2  = \sum_i^{n+2} \left(a_ix_i\right)^2  + 2 \sum_{i<j}^{n+2} a_ia_jx_ix_j =  2 \sum_{i<j}^{n+2} a_ia_jx_ix_j,
$$
In particular, every square of an element in $\H^2((\mathbb{S}^2)^{n+2};\zz)$ is divisible by two.

On the other hand, recall that $\H^2(\cc P^2\sharp \overline{\cc P}^2;\zz)\cong\zz^2$ is generated by two elements $y,z$ with $y^2=1$ and $z^2=-1$ in $\H^4(\cc P^2\sharp \overline{\cc P}^2;\zz)\cong\zz$ (see e.g.\ \cite[Example 10.9, p.361]{Bre97}). Denote by $\bar y\in\H^2(M^{2n+4};\zz)$ the element represented by $y$.
The square $\bar y ^2$ clearly equals the element represented by $y^2$.
Since $y^2=1$, it follows that $\bar y ^2$ is not divisible by two. This completes the proof.
\end{proof}
\begin{rem}
One may give the following alternative proof for \Cref{PROP:M_is_not_homotopy_equivalent_S2}.
We recall that $\cc P^2\sharp \overline{\cc P}^2$ is not a spin manifold, as its second Stiefel--Whitney class is non-zero. Using the multiplicativity formula for Stiefel--Whitney classes we derive that $M^{2n+4}$ does have non-trivial second Stiefel--Whitney class.

Yet, since a product of spheres is stably parallelizable, its characteristic classes vanish. Since the latter classes are homotopy invariants, $M^{2n+4}$ cannot be homotopy equivalent to a product of spheres.
\end{rem}

\subsection{Orbit space being a circle}\label{SEC:orbit_space_circle}

Let $M$ be a $G$-manifold with $M$ and $G$ compact and with orbit space $M/G$ a circle, and denote by $H$ the principal isotropy group. In this case \cite{GaGaZa18}, all orbits are principal and $M$ is equivariantly diffeomorphic to the total space of a $G/H$-bundle over a circle with structure group $N_G(H)/H$, where $N_G(H)$ denotes the normalizer of $H$ in $G$. Equivalently, $M$ has the structure of the mapping torus
$$
\frac{[0,1]\times (G/H)}{(1,x)\sim (0,R_g x)}\qquad \text{ for certain } g\in N_G(H)
$$
where $R_g$ denotes right translation by $g$ (cf.~\cite[Corollary 4.3, p.~41]{Bre72}). In particular the fundamental group of $M$ is infinite.

Every such $M$ admits a $G$-invariant metric of non-negative curvature. Indeed, since $R_g$ is a $G$-equivariant diffeomorphism, the product metric of the interval with a non-negatively curved $G$-invariant metric on $G/H$ (e.g.\ a normal homogeneous metric) induces the desired metric on the mapping torus $M$. The same exact arguments as in the proof of \Cref{hop:non-negative-metrics} show that every $G$-vector bundle over $M$ admits a $G$-invariant metric of non-negative curvature.

In this case, by the Splitting Theorem in \cite{CheGro72} $M$ has a covering space diffeomorphic to $M_0\times T^m$, where $M_0$ is a simply connected space that admits a metric of non-negative curvature and $m$ is the rank of $\pi_1(M)\otimes\qq$. The existence of non-negatively curved metrics on vector bundles over spaces of the form $C\times T$ (with $C$ simply connected and non-negatively curved and $T$ a torus) has been extensively studied in the series of papers \cite{BelKap01,BelKap03}. However we include a couple of comments (\Cref{REM:integral_Ktheory_circle,REM:rational_Ktheory_circle}) in the corresponding sections to complement the discussion.


\section{K-theory}
\label{SEC:K-theory}
In this section we assemble all results in K-theory that we will need.  After a brief introduction to the theory in
\Cref{SUBSEC:G-vector-bundles,SUBSEC:genuine-equivariant-K-theory,SUBSEC:surjectivity-and-stabilization-of-bundles}, we can already prove Part~(\ref{THM: converse soul for cohomo 1}) of \Cref{THM: big theorem} in \Cref{SUBSEC:stabilization-over-cohomogeneity-one-spaces}.  The remaining \Cref{SUBSEC:Borel-equivariant-cohomology-theories,SUBSEC:surjectivity-in-genuine-versus-Borel-equivariant-K-theory,SUBSEC:surjectivity-in-K-theory-versus-cohomology} are devoted to a refinement of the following result of Fok:
\begin{theo*}[Fok]
  Let \(X\) be a smooth compact manifold with a smooth action by a compact Lie group \(G\).  The forgetful map in rational genuine equivariant K-theory \(\K^*_G(X;\qq)\to \K^*(X;\qq)\) is surjective if and only if the forgetful map in (Borel) equivariant rational cohomology \(\H^*_G(X;\qq)\to\H^*(X;\qq)\) is surjective.
\end{theo*}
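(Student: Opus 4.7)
The plan is to reduce both surjectivity statements to a common condition via the Atiyah--Segal completion theorem together with the Chern character. Write \(X_G = X\times_G \E G\) for the Borel construction, \(j\colon X \hookrightarrow X_G\) for the fibre inclusion, and \(I\subset\R(G) = \K^0_G(\point)\) for the augmentation ideal. The K-theoretic forgetful map factors as \(F_K\colon \K^*_G(X;\qq)\xrightarrow{\alpha}\K^*(X_G;\qq) \xrightarrow{j^*} \K^*(X;\qq)\), where \(\alpha\) sends an equivariant bundle \(E\) to \(\E G\times_G E\); the cohomological forgetful map is \(F_H = j^*\colon \H^*(X_G;\qq)\to \H^*(X;\qq)\). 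Since forgetting a virtual \(G\)-representation remembers only its virtual dimension, \(I\) annihilates \(\K^*(X;\qq)\), so both \(F_K\) and the restriction map \(j^*_K\colon \K^*(X_G;\qq)\to \K^*(X;\qq)\) factor through their quotients by \(I\). The Atiyah--Segal completion theorem identifies \(\K^*(X_G;\qq)\cong \K^*_G(X;\qq)^{\wedge}_I\), and finite generation of \(\K^*_G(X;\qq)\) over the Noetherian ring \(\R(G)\otimes\qq\) ensures that completion commutes with reduction modulo \(I\); the map \(\alpha\) therefore induces an isomorphism
\[
\K^*_G(X;\qq)/I\cdot\K^*_G(X;\qq)\xrightarrow{\cong}\K^*(X_G;\qq)/I\cdot\K^*(X_G;\qq).
\]
Hence \(F_K\) and \(j^*_K\) share the same image in \(\K^*(X;\qq)\), so they are surjective together.

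Naturality of the Chern character then yields a commutative square
\[
\begin{CD}
\K^*(X_G;\qq) @>{j^*_K}>> \K^*(X;\qq) \\
@V{\chern}VV @VV{\chern}V \\
\H^*(X_G;\qq) @>{j^*_H}>> \H^*(X;\qq)
\end{CD}
\]
whose right vertical is an isomorphism under the usual even/odd parity identification, because \(X\) is compact. As a natural transformation of multiplicative rational cohomology theories, the Chern character provides a map from the Atiyah--Hirzebruch spectral sequence of the Borel fibration \(X\to X_G\to \B G\) to the rational Serre spectral sequence of the same fibration; this is an isomorphism on \(E_2\) (after the parity identification) and compatible with differentials. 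The permanent-cycle subspaces of the bottom row --- which coincide with the respective images of \(j^*_K\) and \(j^*_H\) --- therefore correspond under \(\chern\), so \(j^*_K\) is surjective if and only if \(j^*_H = F_H\) is. Combining with the previous paragraph gives \(F_K\) surjective iff \(F_H\) surjective.

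The main obstacle is the verification in the first paragraph that Atiyah--Segal completion commutes with reduction modulo the augmentation ideal, i.e.\ that \(\alpha\) induces an isomorphism on the mod-\(I\) quotients. This rests on compactness of \(X\) (guaranteeing finite generation of \(\K^*_G(X;\qq)\) over the Noetherian ring \(\R(G)\otimes\qq\)) together with the standard algebraic fact \(M^{\wedge}_I/IM^{\wedge}_I \cong M/IM\) for finitely generated modules over a Noetherian ring. The spectral sequence comparison via \(\chern\) is classical folklore, but warrants some care regarding the distinct grading conventions in K-theory and cohomology.
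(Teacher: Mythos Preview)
Your argument is correct and follows the same overall strategy as the paper: reduce the genuine forgetful map to the Borel forgetful map via Atiyah--Segal completion, then pass to cohomology via the Chern character.  The first step is essentially identical; your formulation via the algebraic identity \(M^{\wedge}_I/IM^{\wedge}_I \cong M/IM\) and the paper's explicit inverse-limit description of \(u^\borel\) as the completion of \(u\) are two phrasings of the same point.  One small gap worth flagging: the rational completion isomorphism \(\K^*(X_G;\qq)\cong \K^*_G(X;\qq)^{\wedge}_I\) does not follow by tensoring the integral Atiyah--Segal theorem with \(\qq\), since \(X_G\) is an infinite complex and \(\K^*(X_G;\qq)\neq \K^*(X_G)\otimes\qq\) in general.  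The paper handles this by passing through the pro-group form of the completion theorem before rationalizing (\Cref{prop:rational-ASC}); you should at least cite this.

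Your second step, however, takes a genuinely different route.  The paper argues directly: it establishes that the rational Chern character is an isomorphism on \emph{arbitrary} CW complexes (\Cref{prop:Chern-character-iso}), hence in particular on the infinite complex \(X_G\), so both vertical arrows in the naturality square are isomorphisms and the equivalence of surjectivities is immediate.  You instead compare the Atiyah--Hirzebruch and Serre spectral sequences of the Borel fibration and match the permanent cycles in the bottom row, so that only the Chern isomorphism on the compact fibre \(X\) is needed.  Your approach is more elaborate but has the virtue of sidestepping the Chern character on infinite complexes entirely; the paper's approach is cleaner once that extension is in hand, and yields the even/odd refinement (\Cref{surjectivity_of_Heven_iff_K0}) with no extra work.
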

This result first appeared as part of Theorem~1.3 in \cite{Fok:EquivariantFormality}, and later as Theorem~5.6 of \cite{CarlsonFok:isotropy}.\footnote{
  The order in which these two papers were published appears to be opposite to the order in which they were written.
}
The refinement we need for our application (\Cref{THM:rational_stabilization_cohomo1}) is a separate statement for odd- and even-degree cohomology, see \Cref{surjectivity_of_Heven_iff_K0} below.  It is not clear to us whether this refined version can be extracted from the original proof in \cite{Fok:EquivariantFormality}.  We instead supply a direct argument based on Atiyah--Segal completion and the Chern character.  As we were made aware by Jeff Carlson, this is the same line of proof as in the final version of \cite{CarlsonFok:isotropy}.  We nonetheless include a detailed exposition, as it serves several purposes: it keeps our presentation more self-contained, it makes the proof more accessible to non-expert readers, and it clarifies some ambiguities\slash inaccuracies in the existing literature (cf.\ in particular \Cref{warning}).

\subsection{\(G\)-vector bundles}
\label{SUBSEC:G-vector-bundles}
Let \(G\) be a topological group and \(X\) a \(G\)-space, i.e.\ a topological space on which \(G\) acts continuously.  We will soon add restrictions on \(G\) and \(X\), but the basic definitions work in full generality.  Recall that a (real or complex) \define{$G$-vector bundle} over $X$ is another $G$-space $E$ together with a $G$-equivariant map $p\colon E\to X$ such that
\begin{compactenum}[(i)]
\item $p\colon E\to X$ is a (real or complex) vector bundle, and
\item for any $g\in G$ and $x\in X$ the group action $g\colon E_x\to E_{gx}$ is an isomorphism of (real or complex) vector spaces.
\end{compactenum}
Here \(E_x\) denotes the fibre of \(p\) over \(x\).  Note that any complex \(G\)-vector bundle can also be viewed as a real \(G\)-vector bundle.

In some situations, it is convenient to have an additional orthogonal structure available.  An \define{orthogonal vector bundle} over a space $X$ is a real vector bundle $p\colon E\to X$ together with a fibre metric, i.e.\ together with a continuous choice of inner product on each fibre $E_x$.  Likewise, an \define{orthogonal $G$-vector bundle} over a \(G\)-space $X$ is a real \(G\)-vector bundle such that
\begin{compactenum}[(i)]
\item $p\colon E\to X$ is an orthogonal vector bundle, and
\item for any $g\in G$ and $x\in X$ the group action $g\colon E_x\to E_{gx}$ is a linear isometry with respect to the inner products on each fibre.
\end{compactenum}
We have already used the following observation concerning the existence of orthogonal structures in the proof of \Cref{hop:non-negative-metrics} above:

\begin{lemma}\label{LEM:exisntece_of_orthogonal_Gbundle_structure}
  Let $X$ be a compact (or more generally paracompact) Hausdorff \(G\)-space for a compact group \(G\).  Then any real \(G\)-vector bundle over \(X\) admits a fibre metric that makes it an orthogonal $G$-vector bundle.
\end{lemma}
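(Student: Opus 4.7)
The plan is the familiar averaging argument: first produce an arbitrary (non-equivariant) fibre metric, then average it over $G$ with respect to the normalised Haar measure.

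First I would invoke the standard, non-equivariant existence result: since $X$ is paracompact Hausdorff, any real vector bundle $p\colon E\to X$ admits a fibre metric $\langle-,-\rangle_0$, obtained by patching local Euclidean inner products on trivialising charts with a subordinate partition of unity. At this stage no compatibility with the $G$-action is required.

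Next, using that $G$ is compact I would let $\mu$ denote its unique bi-invariant normalised Haar measure and define, for each $x\in X$ and $v,w\in E_x$,
\begin{equation*}
\langle v,w\rangle_x \;:=\; \int_G \bigl\langle g\cdot v,\; g\cdot w\bigr\rangle_{0,\,gx}\, d\mu(g).
\end{equation*}
Here the integrand is well defined because $g\colon E_x\to E_{gx}$ is a linear map and $\langle-,-\rangle_{0,gx}$ is an inner product on $E_{gx}$. Bilinearity and symmetry are immediate from the bilinearity and symmetry of $\langle-,-\rangle_0$, and positive definiteness follows from the fact that the integrand is non-negative and strictly positive when $v=w\neq 0$ (the action is by linear isomorphisms, so $g\cdot v\neq 0$ on an open, hence full-measure, subset of $G$; compactness of $G$ plus continuity give the strict inequality after integration). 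Invariance is the standard computation: for $h\in G$,
\begin{equation*}
\langle h\cdot v, h\cdot w\rangle_{hx} = \int_G \langle (gh)\cdot v,(gh)\cdot w\rangle_{0,ghx}\,d\mu(g) = \langle v,w\rangle_x,
\end{equation*}
by right-invariance of $\mu$ after the substitution $g\mapsto gh^{-1}$.

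The one technical point that deserves a moment's care, and which I expect to be the main obstacle, is the \emph{continuity} of the averaged fibre metric. I would verify it locally: around any point of $X$ choose a $G$-slice or, more elementarily, a trivialising open set $U\subset X$ for $E$ together with continuous local sections of $p$; then $(x,v,w)\mapsto \langle g\cdot v,g\cdot w\rangle_{0,gx}$ is jointly continuous in $(g,x,v,w)$ on the compact set $G\times\overline{U'}\times(\text{compact fibre data})$ for $U'\Subset U$, so by dominated convergence (equivalently, uniform continuity on compacta plus finite total measure) the integral depends continuously on $(x,v,w)$. This gives the desired continuous, positive-definite, $G$-invariant fibre metric, turning $E$ into an orthogonal $G$-vector bundle.
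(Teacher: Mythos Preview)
Your proof is correct and follows essentially the same averaging argument as the paper: construct an arbitrary fibre metric using paracompactness, then average over the compact group \(G\) via Haar measure to obtain an invariant one. The paper phrases the averaging more abstractly as averaging a section of \(\operatorname{Hom}(\operatorname{Sym}^2(E),\rr)\), while you work directly with inner products, but the content is identical. One small remark: in your positive-definiteness step, since each \(g\) acts by a linear \emph{isomorphism}, \(g\cdot v\neq 0\) holds for \emph{every} \(g\in G\) once \(v\neq 0\), not merely on an open subset; your conclusion is of course unaffected.
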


\begin{proof}
  We can view a fibre metric on a vector bundle \(p\colon E\to X\) as a global section of the dual  of the second symmetric power bundle, \(s\colon X \to \operatorname{Hom}(\operatorname{Sym}^2(E),\rr)\),
  which is positive definite at each point.  The assumptions on \(X\) guarantee the existence of such an inner product \(s\) \cite[Part~I, Chapter~3, Theorems~5.5 and 9.5]{Husemoller}. Now the action of \(G\) on \(E\) induces an action on the bundle \(\operatorname{Hom}(\operatorname{Sym}^2(E),\rr)\).  As \(G\) is a compact, we can average \(s\) over \(G\) to obtain an equivariant section \(\bar s\)  (cf.\ \cite[above Proposition~1.1]{Segal:Equivariant}).  This equivariant section \(\bar s\) is still positive definite.  Indeed, for \(x\in X\) and \(v\in E_x\), the compactness of \(G\) ensures that the map \(G\to \rr\) given by \(g\mapsto s_{gx}(gv,gv)\) takes a non-negative minimum on \(G\), and this implies that \(\bar s_x(v,v) = 0\) if and only if \(v=0\) in \(E_x\).
\end{proof}

\subsection{Genuine equivariant K-theory}
\label{SUBSEC:genuine-equivariant-K-theory}

In the following discussion of K-theory, it is convenient to restrict attention to compact Lie groups \(G\) and to concentrate on \(G\)-CW complexes \cite{Matumoto:Whitehead}, or on spaces that are \(G\)-homotopy equivalent to \(G\)-CW complexes.
In our applications, we will mainly be interested in smooth compact manifolds on which \(G\) acts smoothly.  These fit into the framework by the following result of Illman \cite[Corollary~7.2]{Illman}:
\begin{theo}[Illman]\label{thm:illman}
  Any smooth manifold \(X\) with a smooth action of a compact Lie group \(G\) can be given the structure of a \(G\)-CW complex.
\end{theo}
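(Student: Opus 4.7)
The plan is to reduce the statement to the existence of a smooth equivariant triangulation, and then to observe that any such triangulation directly exhibits a $G$-CW structure. More precisely, given a smooth triangulation of $X$ in which every simplex is either fixed setwise by a subgroup of $G$ acting on it through a permutation of vertices, or else moved freely, one obtains a $G$-CW decomposition whose equivariant cells are the orbits of open simplices, of the form $G/H \times \dif^n$ where $H$ is the stabiliser of a simplex. So the task reduces to producing an equivariant smooth triangulation.

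To build such a triangulation I would proceed by induction on orbit types. The standard slice theorem for smooth actions of compact Lie groups provides, near any orbit $G \cdot x$, a $G$-equivariant tubular neighbourhood modelled on $G \times_{G_x} V_x$, where $V_x$ is the slice representation at $x$. The orbit types of a smooth $G$-action on a compact manifold form a finite poset, ordered by subconjugacy of isotropy groups, so one may triangulate successively the closed invariant subspaces $X_{(\leq H)} := \{x\in X : G_x \text{ is subconjugate to } H\}$, starting from the minimal orbit type (principal orbits) and working outwards, or alternatively starting from the most exceptional strata and extending.

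At each inductive step one has a smooth equivariant triangulation on a closed invariant subset and wishes to extend it across an equivariant tubular neighbourhood of the next stratum. Here one uses the local model $G \times_{G_x} V_x$: it suffices to triangulate $V_x$ in a $G_x$-equivariant fashion compatibly with the already-given triangulation on the boundary sphere of a $G_x$-invariant disk, and then to spread the triangulation over the $G$-orbit using the quotient $G \times_{G_x}(-)$. The equivariant triangulation of a linear representation of a compact Lie group can be produced by an equivariant simplicial approximation argument, refining a given triangulation so that the $G_x$-action permutes the simplices.

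The main obstacle is the compatibility of local triangulations at the interfaces between strata of different orbit types: one must simultaneously control a smooth triangulation, the equivariance condition (which forces simplices to be either fixed or freely moved by their stabilisers), and the matching across the normal sphere bundle of each stratum. This is the technical heart of Illman's argument and requires an equivariant version of the simplicial approximation theorem together with careful choices of subdivisions so that the triangulation restricts correctly to each orbit stratum. Once these compatibilities are achieved inductively over the finite poset of orbit types, assembling the pieces produces a global smooth equivariant triangulation, and the induced $G$-CW structure follows as described at the outset.
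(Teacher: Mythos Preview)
The paper does not prove this theorem; it is quoted as a result of Illman with a reference to \cite[Corollary~7.2]{Illman}, and is used as a black box.  So there is no proof in the paper to compare against.

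That said, your sketch contains a conceptual slip worth flagging.  You describe an equivariant triangulation as a triangulation of \(X\) itself in which \(G\) permutes simplices, with each simplex either fixed setwise (with stabiliser acting by vertex permutations) or moved freely.  This is the correct picture for \emph{finite} groups, but not for positive-dimensional compact Lie groups: a simplicial complex has a discrete automorphism group, so a connected Lie group cannot act nontrivially by simplicial automorphisms.  In Illman's setting, an equivariant triangulation is instead a triangulation of the \emph{orbit space} \(X/G\) that is compatible with the orbit-type stratification, meaning the orbit type is constant over each open simplex.  The preimage in \(X\) of such an open simplex \(\sigma\) is then equivariantly homeomorphic to \(G/H_\sigma \times \sigma\), and these are the \(G\)-cells.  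Your inductive strategy over orbit types, use of the slice theorem, and local models \(G\times_{G_x} V_x\) are all in the right spirit and do reflect Illman's argument; it is only the target of the triangulation that needs to be corrected from \(X\) to \(X/G\).
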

Any compact such manifold consequently has the structure of a \emph{finite} \(G\)-CW complex.  It will also be important for us that, when we forget the \(G\)-action on our \(G\)-CW complexes, we remain within the category of spaces homotopy equivalent to CW complexes.  More generally, one has the following result:
\begin{theo}[Waner, Illman]\label{thm:G-CW-restriction}
  Consider a closed subgroup \(H\) of a compact Lie group \(G\).  Any \(G\)-CW complex is \(H\)-homotopy equivalent to an \(H\)-CW complex.  Any finite \(G\)-CW complex is \(H\)-homotopy equivalent to a finite \(H\)-CW complex.
\end{theo}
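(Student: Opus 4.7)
The plan is to proceed by induction on the $G$-skeleton of the $G$-CW complex, with the crucial input being a finite $H$-CW structure on each single orbit $G/K$ for $K$ a closed subgroup of $G$. Since $G$ is a compact Lie group and $K$ is closed, $G/K$ is a compact smooth manifold on which $H$ acts smoothly by restriction; \Cref{thm:illman} then endows $G/K$ with the structure of a finite $H$-CW complex. Taking the product with the standard cell decomposition of $D^n$ equips each $G$-cell $G/K\times D^n$ with a finite $H$-CW structure whose restriction to the boundary $G/K\times S^{n-1}$ is again a finite $H$-CW structure.

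Given a $G$-CW complex $X$ with skeleta $X_n$, I would construct inductively an $H$-CW complex $Y_n$ together with an $H$-homotopy equivalence $f_n\colon Y_n\to X_n$ extending $f_{n-1}$. At stage $n$, one has the $G$-pushout expressing $X_n$ from $X_{n-1}$ together with the attaching maps $\varphi_\alpha\colon G/K_\alpha\times S^{n-1}\to X_{n-1}$ of the $n$-dimensional $G$-cells. Each $\varphi_\alpha$, composed with an $H$-homotopy inverse of $f_{n-1}$, becomes an $H$-equivariant map into $Y_{n-1}$, and by the equivariant cellular approximation theorem (applied to the finite $H$-CW structure on $G/K_\alpha\times S^{n-1}$ built in the previous paragraph) it may be $H$-homotoped to a cellular map. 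Attaching the $H$-cells of $G/K_\alpha\times D^n$ along these cellular maps yields $Y_n$; a standard gluing argument for $H$-homotopy equivalences along pushouts produces $f_n$.

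If $X$ is finite as a $G$-CW complex the induction terminates after finitely many steps, and each $G$-cell contributes only finitely many $H$-cells (since each $G/K_\alpha$ is a finite $H$-CW complex), so $Y$ is finite; otherwise one takes $Y:=\operatorname{colim}_n Y_n$. The main obstacle is the opening reduction, namely the existence of a finite $H$-CW structure on each homogeneous space $G/K$. This rests entirely on Illman's equivariant triangulation theorem, itself a non-trivial result established via invariant Riemannian metrics and the slice theorem (equivariant tubular neighbourhoods) for actions of compact Lie groups. Once this input is available, the remainder is a routine cell-by-cell construction combined with equivariant cellular approximation and standard pushout arguments.
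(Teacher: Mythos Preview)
The paper does not supply its own proof of this theorem; it merely cites Waner \cite[Proposition~3.8]{Waner:Milnor} for the first assertion and Illman \cite[Theorem~A and Corollary~B]{Illman:restricting} for the second. Your sketch is correct and is precisely the strategy of Illman's cited paper: reduce to the single-orbit case via induction over the \(G\)-skeleta, invoke the equivariant triangulation theorem (\Cref{thm:illman}) to obtain a finite \(H\)-CW structure on each \(G/K_\alpha\), and reassemble using equivariant cellular approximation together with the gluing lemma for \(H\)-homotopy equivalences along \(H\)-cofibrations. The only points one might flag as requiring care are standard: that a \(G\)-cofibration is automatically an \(H\)-cofibration (needed so that the skeletal inclusions \(X_{n-1}\hookrightarrow X_n\) remain cofibrations on the \(H\)-side and the colimit argument goes through), and that the square of attaching maps commutes only up to a specified \(H\)-homotopy, so the gluing lemma must be applied in its homotopy-pushout form. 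Both are routine and you have correctly flagged the construction as ``standard''.
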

The first part of this theorem goes back to \cite[Proposition~3.8]{Waner:Milnor}. For the second part, see \cite[Theorem~A and Corollary~B]{Illman:restricting}.
\medskip

\subsubsection{Non-equivariant K-theory}
So let us turn to K-theory.
We will use the notation \(\K^*(-)\) to denote complex topological K-theory, viewed as a cohomology theory on CW complexes in the sense of \cite[Chapter~18]{May:Concise}, or more generally as a cohomology theory on spaces homotopy equivalent to CW complexes.  This theory is two-periodic in the sense that \(\K^{i+2}(-)\) is naturally isomorphic to \(\K^i(-)\).  For a finite CW complex, the K-group in degree zero, \(\K^0(X)\), can be identified with the Grothendieck group of complex vector bundles \(\K(X)\), defined as follows: the set of isomorphism classes of complex vector bundles over \(X\) form a monoid with respect to the direct sum of vector bundles (also known as Whitney sum), and \(\K(X)\) is the Grothendieck group of this monoid. In particular, any element of \(\K(X)\) can be written as a formal difference of two vector bundles over \(X\).  The tensor product of vector bundles induces a multiplication on the Grothendieck group \(\K(X)\),  and this can be extended to a ring structure on \(\K^0(X)\) and a \(\zz_2\)-graded ring structure on the direct sum \(\K^0(X)\oplus \K^1(X)\) for arbitrary \(X\).

Real K-theory, which we will denote as \(\KO^*(-)\), can be described analogously in terms of real vector bundles. This theory behaves very similary to complex K-theory, except that it is eight-periodic rather than two-periodic.  We will mostly use complex K-theory in this paper; real K-theory will show up only occasionally.

\subsubsection{Genuine equivariant K-theory}
We now consider genuine equivariant K-theory \(\K_G^*(-)\), as introduced in \cite{Segal:Equivariant}. We emphasize the attribute \emph{genuine} as a distinction from the Borel equivariant K-theory to be considered later in \Cref{SUBSEC:Borel-equivariant-cohomology-theories}.  For a finite \(G\)-CW complex \(X\), the group \(\K_G^0(X)\) in degree zero can again be identified with a Grothendieck group of vector bundles, namley with the Grothendieck group \(\K_G(X)\) of isomorphism classes of complex \(G\)-vector bundles over \(X\).  For example, as \(G\)-vector bundles over a point are simply \(G\)-representations, \(\K_G^0(\point)\) is isomorphic to the complex representation ring \(\R(G)\).  In general, \(\K_G^0(X)\) has the structure of an \(\R(G)\)-algebra, since the projection to a point \(p\colon X\to \point\) induces a ring homomorphism \(p^*\colon \R(G)\to \K_G^0(X)\).  We also have an obvious ring homomorphism \(u\colon \K_G(X)\to \K(X)\) that forgets the \(G\)-action on the bundles.  This ring homomorphism extends to a natural transformation \(u\colon \K_G^*(-)\to\K^*(-)\), the \define{genuine forgetful map}, defined on all \(G\)-CW complexes.  When \(G\) is trivial, \(u\) is the identity.

The equivariant K-theory of a \(G\)-CW complex \(X\) with a \emph{free} \(G\)-action can be canonically identified with the K-theory of the orbit space \(X/G\), such that the composition
\begin{equation}\label{eq:K_G-of-free}
  \K^*(X/G)\cong \K_G^*(X) \xrightarrow{u} \K^*(X)
\end{equation}
is the pullback along the projection \(X\to X/G\). This is easily verified geometrically in degree zero for finite \(G\)-CW complexes, see \cite[Proposition~2.1]{Segal:Equivariant} or \cite[Chapter~XIV, (1.1)]{May96}.  For the general statement, see \cite[Lemma~12]{MayMcClure:reduction}.

\subsubsection{Rational versions}
Rationalized versions of the complex K-theories discussed above will be denoted \(\K^*(-;\qq)\) and \(\K^*_G(-;\qq)\), respectively.  One way to obtain these theories is to note that the functor \(\K^*_G(-)\otimes\qq\) defines a cohomology theory on finite \(G\)-CW complexes, and to extend it to a cohomology theory on all \(G\)-CW complexes using an equivariant generalization of Adam's version of Brown representability \cite{Adams:Brown,Neeman:Brown}, see \cite[Chapter~XIII, Theorem~3.4]{May96}.
It is obvious from this construction that, for a finite \(G\)-CW complex, we have a canonical identification
\begin{equation}\label{eq:finite-rationalization}
  \K^*_G(X;\qq)\cong \K^*_G(X)\otimes\qq.
\end{equation}
(For infinite complexes, the two sides can differ. This is nothing special about K-theory:  \(\H^*(X;\qq)\) also differs from \(\H^*(X;\zz)\otimes\qq\) for infinite CW complexes.)

\medskip
The geometric description of \(\K^0(X)\) and \(\K^0_G(X)\) for finite \(G\)-CW complexes, which is essential for our applications, holds in slightly greater generality:
\begin{lemma}\label{lem:K0=K}
  For any compact Hausdorff space \(X\) that is homotopy equivalent to a finite CW complex, \(\K^0(X)\) can be identified with the Grothendieck group \(\K(X)\).  More generally, for any compact Lie group \(G\),
 and any compact Hausdorff \(G\)-space \(X\) that is \(G\)-homotopy equivalent to a finite \(G\)-CW complex, \(\K^0_G(X)\) can be identified with the Grothendieck group \(\K_G(X)\).
\end{lemma}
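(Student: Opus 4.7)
The plan is to reduce to the classical geometric description of $\K^0_G$ on finite $G$-CW complexes, which is recalled just above the lemma, by transporting the identification along a chosen $G$-homotopy equivalence $f\colon X\to Y$ with $Y$ a finite $G$-CW complex. The non-equivariant case is then recovered by taking $G$ trivial. For this strategy to succeed, both functors $\K^0_G(-)$ and $\K_G(-)$ need to be $G$-homotopy-invariant on compact Hausdorff $G$-spaces. On the cohomology side this is automatic from the Brown-representability construction of $\K^0_G$ summarised around \eqref{eq:finite-rationalization}. On the bundle side this is Segal's equivariant bundle homotopy theorem \cite{Segal:Equivariant}: $G$-equivariantly homotopic maps between compact Hausdorff $G$-spaces pull $G$-vector bundles back to isomorphic $G$-vector bundles.

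Granted these inputs, the argument is a short diagram chase. For $f\colon X\to Y$ as above, one considers the commutative square
\begin{equation*}
\begin{CD}
\K^0_G(Y) @>{f^*}>> \K^0_G(X) \\
@V{\cong}VV @VVV \\
\K_G(Y) @>{f^*}>> \K_G(X)
\end{CD}
\end{equation*}
in which the left vertical is the identification on finite $G$-CW complexes recalled in the text, and both horizontal maps are isomorphisms by the homotopy invariance discussed above. It follows that the right vertical is an isomorphism as well, which is precisely the desired identification for $X$. Setting $G$ trivial recovers the non-equivariant statement.

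The main technical input, and the only non-formal step I expect to need, is the $G$-homotopy invariance of the Grothendieck functor $\K_G(-)$ on compact Hausdorff $G$-spaces. This is substantially more delicate than its non-equivariant counterpart: a naive partition-of-unity argument does not generalise, and one instead runs an equivariant covering-homotopy argument that relies on the slice theorem for actions of compact Lie groups. I would import this as a black box from \cite{Segal:Equivariant}; granting it, the remainder of the proof is purely formal, and no subtleties arise from the choice of homotopy equivalence $f$ thanks to the functoriality of both $\K^0_G$ and $\K_G$.
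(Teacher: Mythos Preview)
Your proposal is correct and takes essentially the same approach as the paper: the paper's proof is a one-line appeal to the $G$-homotopy invariance of the Grothendieck functor $\K_G(-)$, citing \cite[Proposition~2.3]{Segal:Equivariant}, and your argument simply unpacks this by writing down the comparison square along a chosen $G$-homotopy equivalence to a finite $G$-CW complex. One small presentational point: the right vertical map in your square is not given a priori but is \emph{defined} as the composite of the other three isomorphisms (or, equivalently, as the natural comparison map from bundles to represented K-theory), so the ``diagram chase'' is really just the construction of the desired identification rather than a verification that a pre-existing map is an isomorphism.
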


\begin{proof}
  This follows from the homotopy invariance of the Grothendieck group \(\K_G(X)\); see \cite[Proposition~2.3]{Segal:Equivariant}.
\end{proof}
\begin{warning}\label{warning}
  \Cref{lem:K0=K} fails for compact Hausdorff spaces \(X\) not homotopy equivalent to finite CW complexes.  More precisely, the notation \(\K^0(X)\) is ambiguous for such spaces,  a subtlety that needs to be taken into account when referring to the literature:

  Traditionally, K-theory is developed as a \emph{cohomology theory on compact Hausdorff spaces} \cite[Definition~2.9.1]{Park:K-theory}.  This is the approach taken in the early papers of Atiyah and Segal such as \cite{AtiyahSegal} and \cite{Segal:Equivariant}, and also in textbooks such as \cite{Atiyah:K-theory} or \cite{Karoubi:K-theory}.  Let us temporarily denote the resulting theory by \(\K^*_\text{geom}\).  In this theory, \(\K^0_{\text{geom}}(X)\) is the Grothendieck group \(\K(X)\) for any compact Hausdorff space \(X\), by definition.

  Our view of K-theory as a cohomology theory on CW complexes, on the other hand, is prevalent in homotopy theory and in later papers such as \cite{AHJM} (see third line therein).  This theory can easily by extended to a theory on all spaces in a completely formal way, using CW approximations,
  cf.\ \cite[Chapter~18, \S\,1 and Chapter~24, \S\,1, final definition and preceding paragraph]{May:Concise}.
  However, this extended theory \(\K^*\) differs from \(\K^*_\text{geom}\) for general compact Hausdorff spaces.

  The problem is that \(\K_{\text{geom}}^*(-)\) is not invariant under \emph{weak} equivalences.
  For example, consider the Warsaw circle \(W\): a compact Hausdorff space with a single point as CW approximation, but whose Čech cohomology agrees with the cohomology of a circle.  Clearly, \(\K^1(W)\) vanishes, but \(\K^1_{\text{geom}}(W) \neq 0\), as follows easily from \Cref{rem:Karoubis-Chern-character} below.
\end{warning}

\subsection{Surjectivity and stabilization of bundles}
\label{SUBSEC:surjectivity-and-stabilization-of-bundles}
Here is the well-known but key result that allows us to pass from K-theoretical considerations to the explicit statements about vector bundles in the main theorems stated in the introduction:

\begin{prop}\label{surjectivity-of-forgetful-map-in-Kgroups}
  Let \(X\) be a finite \(G\)-CW complex for a compact Lie group \(G\).
    Then the genuine forgetful map \(u\colon \K_G^0(X) \to \K^0(X)\) is surjective if and only if, up to stabilization, every complex vector bundle \(E\) over \(X\) is the underlying vector bundle of a \(G\)-vector bundle over~\(X\) (i.e.\ if and only if for any sufficiently large integer \(k\) the direct sum \(E\oplus \cc^k\) is isomorphic to the underlying non-equivariant vector bundle of a complex \(G\)-vector bundle over~\(X\)).
\end{prop}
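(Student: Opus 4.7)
The plan is to unwind the definitions of $\K_G^0(X)$ and $\K^0(X)$ using \Cref{lem:K0=K}, which identifies them with the Grothendieck groups $\K_G(X)$ and $\K(X)$ of isomorphism classes of ($G$-)vector bundles. The two main ingredients I will need, both valid for finite $G$-CW complexes, are: (i) every vector bundle on $X$ is a direct summand of a trivial bundle, and (ii) every complex $G$-vector bundle on $X$ is a direct summand of a trivial $G$-bundle $X\times W$ associated to some finite-dimensional complex $G$-representation $W$ (this is Segal's observation for compact $G$-spaces in \cite{Segal:Equivariant}). The forgetful map $u$ obviously sends $X\times W$ to the trivial vector bundle $\cc^{\dim W}$.

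For the $(\Leftarrow)$ direction, I would begin by writing an arbitrary class of $\K^0(X)$ in the form $[E]-[\cc^n]$ for some actual vector bundle $E$ and some $n\geq 0$, which is possible by (i). By the stabilization hypothesis, there exists $k\geq 0$ and a complex $G$-vector bundle $\tilde E$ with $u(\tilde E)\cong E\oplus\cc^k$. Taking $X\times \cc^{n+k}$ (with trivial $G$-action) as a preimage of $\cc^{n+k}$, we then have $[E]-[\cc^n] = u\bigl([\tilde E] - [X\times\cc^{n+k}]\bigr)$, proving surjectivity.

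For the $(\Rightarrow)$ direction, take a complex vector bundle $E$ on $X$. By surjectivity of $u$, there are $G$-vector bundles $F_1,F_2$ with $[E]=[u(F_1)]-[u(F_2)]$ in $\K(X)$. Equality in the Grothendieck group yields a vector bundle $T$ on $X$ and an isomorphism $E\oplus u(F_2)\oplus T \cong u(F_1)\oplus T$. Using (i), choose a vector bundle $T'$ with $T\oplus T'\cong \cc^\ell$, so that $E\oplus u(F_2)\oplus\cc^\ell\cong u(F_1)\oplus\cc^\ell$. Using (ii), pick a $G$-vector bundle $F_2'$ and a representation $W$ with $F_2\oplus F_2' \cong X\times W$, so that $u(F_2)\oplus u(F_2')\cong\cc^{\dim W}$. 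Adding $u(F_2')$ to both sides yields
\[
  E\oplus \cc^{\dim W+\ell} \;\cong\; u\bigl(F_1\oplus F_2'\oplus (X\times\cc^\ell)\bigr),
\]
so $k:=\dim W+\ell$ works.

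There is no real obstacle here: the argument is routine once one has the right geometric identifications for $\K_G^0$ and $\K^0$ and the equivariant Swan-type fact (ii). The only subtlety worth flagging is the standard distinction between equality in the Grothendieck group and isomorphism of bundles, which is precisely what forces the stabilization by $\cc^k$ to appear in the statement.
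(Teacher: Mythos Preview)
Your proof is correct and follows essentially the same approach as the paper's: both directions rely on identifying $\K_G^0(X)$ and $\K^0(X)$ with Grothendieck groups via \Cref{lem:K0=K}, and the key equivariant ingredient is exactly Segal's observation (your (ii), the paper's \cite[Proposition~2.4]{Segal:Equivariant}) that any $G$-bundle embeds as a summand of a trivial $G$-bundle. The only cosmetic differences are that for $(\Leftarrow)$ you write an arbitrary class as $[E]-[\cc^n]$ and stabilize once, whereas the paper writes it as $[E_1]-[E_2]$ and stabilizes each $E_i$; and for $(\Rightarrow)$ you make the intermediate bundle $T$ explicit where the paper works directly with equalities in $\K(X)$.
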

\begin{proof}
  Note first that by assumption and by \Cref{thm:G-CW-restriction}, the underlying non-equivariant space \(X\) is a compact Hausdorff space homotopy equivalent to a finite CW complex.
  So by \Cref{lem:K0=K} the K-groups involved can be identified with the Grothendieck groups of vector bundles \(\K_G(X)\) and \(\K(X)\), respectively.

  Assume now that \(u\) is surjective.  Then any vector bundle \(E\) over \(X\) can be written as \(E = uF_1 - uF_2\) in \(\K(X)\) for certain \(G\)-vector bundles \(F_1\) and \(F_2\).
  By \cite[Proposition~2.4]{Segal:Equivariant}, there exists a \(G\)-vector bundle \(F_2^\perp\) over \(X\) such that \(u(F_2\oplus F_2^\perp) \cong \cc^m\) for some \(m\).  So, altogether, we have the equality
  \[
    E + \cc^m = (uF_1 - uF_2) + (uF_2 + uF_2^\perp) = u(F_1+F_2^\perp)
  \]
  in \(\K(X)\).  This implies that, for sufficiently large \(k'\), the bundle \(E\oplus\cc^{m+k'}\) is isomorpic to \(u(F_1\oplus F_2^\perp) \oplus \cc^{k'}\) and hence to \(uF\) for the \(G\)-vector bundle \(F:= F_1\oplus F_2^\perp\oplus\cc^{k'}\), where \(\cc^{k'}\) is equipped with the trivial \(G\)-action. Set $k=m+k'$ and the claim follows.

  Conversely, assume that every vector bundle over \(X\) is the underlying vector bundle of a \(G\)-vector bundle over~\(X\) up to stabilization. Take any class $E_1-E_2$ in \(\K(X)\). By assumption there is some $k_i$ such that the direct sum \(E_i\oplus \cc^{k_i}\) is isomorphic to the underlying non-equivariant vector bundle of a \(G\)-vector bundle $F_i$, for $i=1,2$. We may assume without loss of generality that \(k_1 = k_2\). Then
\[
  E_1 - E_2 = E_1\oplus \cc^{k_1} - E_2\oplus \cc^{k_2} = uF_1 - uF_2 = u(F_1-F_2)
\]
in \(\K(X)\).  Thus \(u\) is surjective.
\end{proof}

This integral result is all we need for the proof of \Cref{THM: big theorem}, Part~(\ref{THM: converse soul for cohomo 1}), as presented in \Cref{SUBSEC:stabilization-over-cohomogeneity-one-spaces} below.  However, we will also be interested in situations where we only know the surjectivity of the \emph{rational} forgetful map \(u_\qq\colon \K^0_G(X;\qq)\to \K^0(X;\qq)\).  This rational surjectivity also has implications for the stabilization of vector bundles.  They are weaker, of course, than the implications in the integral case, but they hold simultaneously for both real and complex bundles. This is due to the following observation.

\begin{prop}\label{prop:rational_realification_ir_surjective}
 For any finite CW complex \(X\), the rational realification map $\K^0(X;\qq)\to \KO^0(X;\qq)$ induced by forgetting the complex structure is surjective.
\end{prop}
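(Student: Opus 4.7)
The plan is to exploit the fact that, at the level of virtual bundles, realification and complexification compose (in one order) to multiplication by the integer \(2\). Once the base ring is rationalised, multiplication by \(2\) becomes invertible, and this immediately furnishes a section of the realification map.

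More concretely, I would first introduce the complexification homomorphism \(c \colon \KO^0(X) \to \K^0(X)\), which on the level of bundles is given by \(V \mapsto V\otimes_\rr \cc\). The point is then to verify the well-known identity \(r\circ c = 2\) on \(\KO^0(X)\). This rests on the purely algebraic observation that for any real vector bundle \(V\), the underlying real bundle of \(V\otimes_\rr \cc\) is canonically isomorphic to \(V\oplus V\); applying this fibrewise gives \(r(c(V)) = V\oplus V = 2V\) in \(\KO^0(X)\), and extends by linearity to virtual bundles. Under the hypothesis that \(X\) is a finite CW complex, \Cref{lem:K0=K} justifies computing with actual bundles throughout.

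After tensoring with \(\qq\) we obtain \(r_\qq \circ c_\qq = 2 \cdot \id\) on \(\KO^0(X;\qq)\). Since \(2\) is a unit in \(\qq\), the map \(\tfrac{1}{2}c_\qq\) is a right inverse to \(r_\qq\), which therefore is (split) surjective. This completes the argument.

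I do not expect any serious obstacle here: the whole statement reduces to the elementary identity \(r\circ c = 2\) plus the triviality that \(2\) is invertible in \(\qq\). Alternative approaches via the Chern character and the Pontryagin character (identifying \(\K^0(X;\qq)\) with \(\H^{\even}(X;\qq)\) and \(\KO^0(X;\qq)\) with \(\bigoplus_{k\ge 0}\H^{4k}(X;\qq)\), under which \(r_\qq\) becomes, up to scalars, the evident projection onto degrees divisible by \(4\)) would also work, but the composition argument above is shorter and does not require invoking characteristic class formalism.
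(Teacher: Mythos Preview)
Your proof is correct and follows essentially the same approach as the paper: both arguments use the identity \(r\circ c = 2\) on \(\KO^0(X)\) and then invert \(2\) after rationalising to conclude that \(r_\qq\) is surjective.
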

\begin{proof}
The composition of the complexification \(\KO^0(X)\to \K^0(X)\) with the realification \(\K^0(X)\to \KO^0(X)\) is multiplication by \(2\) on \(\KO^0(X)\). This composition is still multiplication by \(2\) after passing to rational coefficients, hence an isomorphism.  So rational realification is surjective.
\end{proof}
Here, then, is the rational version of \Cref{surjectivity-of-forgetful-map-in-Kgroups}:
\begin{prop}\label{rational-surjectivity-of-forgetful-map-in-Kgroups}
  Let \(X\) be a finite \(G\)-CW complex for a compact Lie group \(G\).
  Suppose that the rational genuine forgetful map \(u_\qq\colon \K^0_G(X;\qq)\to \K^0(X;\qq)\) is surjective. Then for any real or complex vector bundle \(E\) over \(X\), there exist integers \(q>0\) and \(k\geq 0\) such that the Whitney sum \(qE\oplus \rr^k\) or \(qE\oplus \cc^k\) is isomorphic to the underlying non-equivariant bundle of a real or complex \(G\)-vector bundle, respectively.  (Here \(qE\) denotes the Whitney sum of \(q\) copies of \(E\).)
\end{prop}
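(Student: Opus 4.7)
The plan is to rationalize the argument of \Cref{surjectivity-of-forgetful-map-in-Kgroups} in the complex case, and then bootstrap to real bundles by passing through complexification followed by realification.

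First I would handle complex bundles. Given a complex vector bundle $E$ over $X$, the surjectivity of $u_\qq$ together with the identification \eqref{eq:finite-rationalization} furnishes, after clearing denominators, a class $\beta \in \K^0_G(X)$ and an integer $q_0>0$ with $u(\beta) - q_0[E]$ torsion in $\K^0(X)$. Since $X$ is a finite CW complex, $\K^0(X)$ is finitely generated, so its torsion subgroup has some finite exponent $N$; replacing $\beta$ by $N\beta$ and setting $q := Nq_0$ yields the integral equality $u(N\beta) = q[E]$ in $\K^0(X)$. From here the last paragraph of the proof of \Cref{surjectivity-of-forgetful-map-in-Kgroups} transcribes almost verbatim: write $N\beta = [F_1]-[F_2]$ for $G$-vector bundles $F_i$, use \cite[Proposition~2.4]{Segal:Equivariant} to choose a $G$-complement $F_2^\perp$ with $u(F_2\oplus F_2^\perp)\cong \cc^m$, and conclude that for sufficiently large $k'$ the bundle $qE\oplus\cc^{m+k'}$ is the underlying complex bundle of the $G$-vector bundle $F_1\oplus F_2^\perp\oplus \cc^{k'}$.

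Next I would reduce the real case to the complex case via complexification. Given a real vector bundle $E$, form $cE := E\otimes_\rr\cc$; applying the complex case just established to $cE$ produces integers $q',k'$ and a complex $G$-vector bundle $F$ with $q'(cE)\oplus\cc^{k'}\cong uF$ as complex vector bundles. Forgetting complex structures on both sides, the left-hand side becomes $2q'E\oplus\rr^{2k'}$ by the standard isomorphism $r(cE)\cong E\oplus E$ of real bundles (the same identity that underlies \Cref{prop:rational_realification_ir_surjective}), while the right-hand side is the underlying non-equivariant real bundle of the real $G$-vector bundle $rF$ obtained from $F$ by forgetting its complex structure. Setting $q := 2q'$ and $k := 2k'$ then finishes the argument.

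The only subtlety I anticipate is the denominator- and torsion-clearing at the start of the complex case, which promotes a rational equality in $\K^0(X;\qq)$ to an integral equality in $\K^0(X)$ at the cost of a positive multiplier $q$. This multiplier is precisely what makes the conclusion weaker than \Cref{surjectivity-of-forgetful-map-in-Kgroups}; once it is in place, the bundle-level stabilization is a routine application of the existing argument.
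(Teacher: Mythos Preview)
Your argument is correct. For complex bundles it is essentially the paper's proof: the paper also clears denominators and torsion to pass from $u_\qq$-surjectivity to an integral equality $qE = u(F_1'-F_2')$ in $\K(X)$, and then repeats the stabilization step from \Cref{surjectivity-of-forgetful-map-in-Kgroups}.

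For real bundles you take a genuinely different (and more direct) route. The paper invokes \Cref{prop:rational_realification_ir_surjective} to write $E$ rationally as a difference of realifications of complex bundles, clears denominators a second time in $\KO(X)$, and only then applies the complex case. You instead complexify $E$ once, apply the complex case to $cE$, and realify back using $r(cE)\cong E\oplus E$ at the bundle level. Both arguments rest on the same identity $rc=2$, but yours avoids the second passage through rational K-theory and the attendant denominator-clearing; the only cost is that your multiplier $q$ is always even.
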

\begin{proof}
  By the same argument as in the proof of \Cref{surjectivity-of-forgetful-map-in-Kgroups}, the K-groups involved can be identified with the Grothendieck groups of vector bundles \(\K_G(X)\) and \(\K(X)\), respectively.  Recall moreover from \eqref{eq:finite-rationalization} that, under our finiteness assumptions, we can identify \(u_\qq\) with the localization \(u\otimes\qq\colon \K_G(X)\otimes\qq\to \K(X)\otimes\qq\) of the integral forgetful map. Let \(E\) be a complex vector bundle. Since \(u\otimes\qq\) is surjective, there exist \(G\)-vector bundles \(F_1\) and \(F_2\) and integers \(p'\) and \(q'\neq 0\) such that \(E = \frac{p'}{q'} \cdot (uF_1 - uF_2)\) in \(\K(X)\otimes\qq\). Indentifying this latter group with the localization of the \(\zz\)-module \(\K(X)\) by the multiplicative set \(\zz\setminus\{0\}\), we deduce that
  \(
    q''q'\cdot E = q''p'\cdot (uF_1 - uF_2)
  \)
  in \(\K(X)\) for some integer \(q''\neq 0\).
  So
  \[
    qE = u(q''p'F_1 - q''p'F_2)
  \]
  in \(\K(X)\), where \(q:= q''q'\) and where \(q''p'F_1\) and \(q''p'F_2\) are \(G\)-vector bundles by construction. (Observe that a finite Whitney sum of $G$-vector bundles has a $G$-vector bundle structure). Arguing as in the proof of \Cref{surjectivity-of-forgetful-map-in-Kgroups}, the claim for complex bundles follows.

Now let $E$ be a real vector bundle. Since rational realification $\K(X)\otimes\qq\to \KO(X)\otimes\qq$ is surjective by \Cref{thm:G-CW-restriction} and \Cref{prop:rational_realification_ir_surjective}, there exist complex bundles $\bar E_1$ and $\bar E_2$ and integers \(p',q'\) such that \(E =  \frac{p'}{q'}\cdot (r\bar E_1 - r\bar E_2)\) in \(\KO(X)\otimes\qq\), where \(r\) denotes realification of complex bundles. This implies that
\[
  q''q'E = r(q''p'\bar E_1) - r(q''p'\bar E_2)
\]
in \(\KO(X)\) for some \(q''\neq 0\).  Using the existence of a complex bundle whose Whitney sum with $q''p'\bar E_2$ is a trivial bundle, we conclude that $q''q'E$ is stably equivalent to $r\bar E$ for some complex vector bundle $\bar E$. By the first part of the proof, there exists an integer $q'''$ such that $q'''\bar E$ is stably equivalent to some complex $G$-vector bundle $F$. Taking $q:=q'''q''q'$, we find that $qE$ is stably equivalent to $ruF=urF$, i.e.\ to the underlying bundle of the real $G$-vector bundle $rF$. This completes the proof.
\end{proof}

The next corollary is more of an aside, and needs some terminology not used elsewhere in this paper. For simplicity, and to ensure agreement between the definitions available in different sources, let us assume that \(X\) is a \emph{connected} finite CW complex.  Two real vector bundles $E_i\to X$ with $i=1,2$ belong to the same \define{stable class} if there exist integers $k_i$ such that $E_1\oplus\rr^{k_1}$ and $E_2\oplus\rr^{k_2}$ are isomorphic.
The set of stable classes is an abelian group which is isomorphic to the so-called \emph{reduced K-group} $\widetilde{\KO}(X)$, a subgroup of \(\KO(X)\) for which we have a canonical splitting  $\KO(X)\cong\zz\oplus\widetilde{\KO}(X)$ \cite[\S\,2.1]{Hatcher:VBKT}.
\begin{cor}\label{REM:stable_classes}
Suppose \(X\) as in \Cref{rational-surjectivity-of-forgetful-map-in-Kgroups} satisfies the additional hypothesis that $\oplus_{i>0}\H^{4i}(X;\qq)\neq 0$.  Then there exist infinitely many real vector bundles which carry a $G$-vector bundle structure and belong to pairwise distinct stable classes in \(\widetilde{\KO}(X)\).
\end{cor}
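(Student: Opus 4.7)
The plan is to combine Proposition~\ref{rational-surjectivity-of-forgetful-map-in-Kgroups} with the classical rational calculation of real K-theory for a finite CW complex~\(X\). Via the Pontryagin character, one has a rational isomorphism
\[
\widetilde{\KO}(X)\otimes\qq \;\cong\; \bigoplus_{i>0}\H^{4i}(X;\qq),
\]
which I would cite as a standard fact (e.g.\ from the Atiyah--Hirzebruch spectral sequence for \(\KO\), using that \(\KO^{*}(\point)\otimes\qq\) is \(\qq\) in degrees divisible by~\(4\) and zero otherwise). Under the hypothesis \(\bigoplus_{i>0}\H^{4i}(X;\qq)\neq 0\), this shows that \(\widetilde{\KO}(X)\) has positive rational rank and, in particular, contains an element of infinite order.

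Next, I would fix a real vector bundle \(E\) over \(X\) representing an element \([E]\in\widetilde{\KO}(X)\) of infinite order. For each positive integer~\(n\), Proposition~\ref{rational-surjectivity-of-forgetful-map-in-Kgroups} applied to the bundle \(nE\) produces integers \(q_n>0\) and \(k_n\geq 0\) such that
\[
F_n := q_n(nE)\oplus\rr^{k_n}
\]
is isomorphic, as a real vector bundle, to the underlying bundle of a real \(G\)-vector bundle; in particular \(F_n\) carries a \(G\)-vector bundle structure. Its stable class in \(\widetilde{\KO}(X)\) equals \(q_n n\cdot [E]\), because adjoining a trivial summand does not affect the stable class.

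To conclude, it suffices to observe that the set of integers \(\{q_n n : n\geq 1\}\) is unbounded, since \(q_n n\geq n\to\infty\), and that distinct integer multiples of the infinite-order element \([E]\) are distinct in \(\widetilde{\KO}(X)\). Hence the stable classes \([F_n] = q_n n\cdot[E]\) assume infinitely many different values as \(n\) varies, and selecting one representative~\(F_n\) per stable class yields the desired infinite family of pairwise stably inequivalent real vector bundles, each admitting a \(G\)-vector bundle structure.

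I do not expect any step to pose a real obstacle: the main ingredient, Proposition~\ref{rational-surjectivity-of-forgetful-map-in-Kgroups}, is already in hand, and the Pontryagin character computation is standard. The one point to verify explicitly is that the integer~\(q\) in Proposition~\ref{rational-surjectivity-of-forgetful-map-in-Kgroups} is permitted to depend on the bundle---which it is, by the statement---so that the multiplicities~\(q_n\) may vary with~\(n\) without any uniform bound being required.
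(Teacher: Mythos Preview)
Your proposal is correct and follows essentially the same approach as the paper: both use the rational identification \(\widetilde{\KO}(X)\otimes\qq\cong\bigoplus_{i>0}\H^{4i}(X;\qq)\) to obtain an element \([E]\) of infinite order, then apply \Cref{rational-surjectivity-of-forgetful-map-in-Kgroups} repeatedly to produce \(G\)-structured bundles lying in infinitely many distinct multiples of \([E]\). The only cosmetic difference is that the paper builds its sequence inductively to guarantee a strictly increasing sequence of multipliers, whereas you observe directly that \(q_n n\geq n\) forces unboundedness; your variant is slightly more streamlined.
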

\begin{proof}
  The Atiyah--Hirzebruch spectral sequence yields a group isomorphism \(\KO(X)\otimes\qq \cong \oplus_{i}\H^{4i}(X;\qq)\) (see \cite[\S\,2.4]{AtiyahHirzebruch} for the complex variant \(\K(X)\otimes\qq\cong \oplus_{i}\H^{2i}(X;\qq)\); the real case is analogous \cite[\S\,2.4]{GonZib1}).  Under this isomorphism, \(\widetilde{\KO}(X)\otimes\qq\) gets identified with \(\oplus_{i>0}\H^{4i}(X;\qq)\).  Therefore, the condition \(\oplus_{i>0}\H^{4i}(X;\qq)\neq 0\) is equivalent to the existence of a stable class in \(\widetilde{\KO}(X)\) of infinite order.

  %
  %

  So let us pick a real vector bundle \(E\) whose stable class has infinite order.
  By \Cref{rational-surjectivity-of-forgetful-map-in-Kgroups}, we can find integers \(q > 0\) and \(k \geq 0\) such that \(F := q E\oplus\rr^k\) carries a $G$-vector bundle structure.  Then the Whitney sum \(nF\) also carries a \(G\)-vector bundle structure, for each integer \(n\).
  As the stable class of \(F\) has infinite order, the stable classes of the bundles \(nF\) for different \(n\) are all distinct.
\end{proof}

\subsection{Stabilization over cohomogeneity one spaces}
\label{SUBSEC:stabilization-over-cohomogeneity-one-spaces}
We now apply the characterization of (integral) surjectivity in \Cref{surjectivity-of-forgetful-map-in-Kgroups} to cohomogeneity one spaces, building on a surjectivity result of Carlson.
Recall that the commutator subgroup of a compact connected Lie group \(G\) is semisimple, and that \(G\) is a semi-direct product of this commutator subgroup and a torus [Hilgert \& Neeb, Theorem~12.2.6].  We say that \(G\) satisfies \textbf{Steinberg's assumptions}
if
\begin{equation}\label{eq:Steinbergs-assumptions}
  \text{\parbox{0.8\linewidth}{\(G\) is a compact connected Lie group whose commutator subgroup is a product of simply-connected compact Lie groups and Lie groups \(SO(r)\) with odd~\(r\).}}
\end{equation}
See \cite[Theorem~1.2]{Steinberg:Pittie} for the significance of this condition and equivalent assumptions.


Carlson showed in \cite[Theorem~6.1]{Carlson:coho1} that the genuine forgetful map \(u\colon \K_G^d(M) \to \K^d(M)\) is surjective in each degree~\(d\) provided $M$ is a cohomogeneity one manifold such that all groups in the associated diagram \((G,H,K_-,K_+)\) are connected, \(\pi_1(G)\) is torsion-free, \(K_\pm\) satisfy Steinberg's assumptions \eqref{eq:Steinbergs-assumptions}, and \(\rank G = \max\{\rank K_+,\rank K_-\}\). Together with \Cref{surjectivity-of-forgetful-map-in-Kgroups} (and \Cref{thm:illman}), we get the following result.

\begin{theo}\label{carlson-surjectivity}
  Suppose \(M\) is a cohomogeneity one manifold with all groups in the associated diagram \((G,H,K_-,K_+)\) connected, \(\pi_1(G)\) torsion-free and such that \(K_\pm\) satisfy Steinberg's assumptions \eqref{eq:Steinbergs-assumptions}.  If \(\rank G = \max\{\rank K_+,\rank K_-\}\), then every complex vector bundle over $M$ carries a $G$-vector bundle structure up to stabilization, i.e.\ for every complex vector bundle \(E\) over \(M\) there is an integer \(k\) such that \(E\oplus\cc^k\) carries a $G$-vector bundle structure.
\end{theo}

\begin{rem}\label{REM:carlson_not_necessary}
For a cohomogeneity one $G$-manifold $M$, none of the sufficient conditions in Carlson's result is strictly necessary for the surjectivity of \(u\colon \K_G^0(M) \to \K^0(M)\). For example, $\mathbb{S}^5$ admits infinitely many non-equivalent cohomogeneity one actions with group diagram \((SO(2)SO(3),\zz_2,SO(2),O(2))\), see \cite[p. 343]{GroveZiller:Milnor}; for the surjectivity of $u$ note that $\K^0(\mathbb{S}^5)$ is trivial.
\end{rem}

\begin{rem}
Carlson's result only applies to cohomogeneity one spaces with non-principal orbits of codimension $\geq 2$, since the isotropy group of an exceptional orbit (i.e. an orbit of codimension $1$) must be disconnected.
\end{rem}
We are ready to give the proof of Part~(\ref{THM: converse soul for cohomo 1}) of \Cref{THM: big theorem}.

\begin{proof}[Proof of \Cref{THM: big theorem}, Part~(\ref{THM: converse soul for cohomo 1})]
  Let $E$ be an arbitrary complex vector bundle over a cohomogeneity one manifold $M$ as in Part~(\ref{THM: converse soul for cohomo 1}) of \Cref{THM: big theorem}. By \cref{carlson-surjectivity} there is $\cc^{k'}$ such that $E\oplus\cc^{k'}$ is isomorphic to (the underlying vector bundle of) a $G$-vector bundle $F$. The assumption \(K_\pm/H \cong S^1\) allows us to apply \cref{hop:non-negative-metrics}. Thus the total space of $F$ carries a non-negatively curved metric. Its pullback via the isomorphism $E\oplus\cc^{k'} \cong F$ yields the desired metric on the total space of the complex vector bundle $E\oplus\cc^{k'}$.  This total space can be identified with the total space of the real vector bundle $E\oplus\rr^{k}$ for $k=2k'$; in particular, it is diffeomorphic to $E\times\rr^{k}$.
\end{proof}


\begin{rem}\label{REM:maximal_rank_iff_positive_characteristic}
The assumption \(K_\pm/H \cong S^1\) implies \(\rank K_+ = \rank K_-=\rank H+1\). In this setting, the condition \(\rank G = \max\{\rank K_+,\rank K_-\}\) is equivalent to  \(\rank G = \rank K_\pm\). The latter implies positivity of the Euler characteristic, see~\cite[Corollary~1.6]{Frank:cohomogeneity}. Cohomogeneity one manifolds with $\chi(M)>0$ have been investigated in \cite{AlekPod:compact,Frank:cohomogeneity,DeVitoKennard:coho1}.
\end{rem}

\begin{rem}\label{REM:integral_Ktheory_circle}
  Carlson also computes both the non-equivariant and the equivariant K-ring of a $G$-manifold with orbit space a circle \cite[Proposition~1.7]{Carlson:coho1}.
\end{rem}


\subsection{Borel equivariant cohomology theories}
\label{SUBSEC:Borel-equivariant-cohomology-theories}

Our next aim will be to relate the forgetful map in genuine equivariant rational K-theory that appears in \Cref{rational-surjectivity-of-forgetful-map-in-Kgroups} to the forgetful map in rational, Borel equivariant cohomology.  As a stepping stone, we will use Borel equivariant K-theory.  So let us quickly recall the construction of these Borel equivariant theories.

The \define{homotopy orbit space} or \define{Borel construction} of a \(G\)-space \(X\) is given by
\begin{equation}\label{eq:homotopy-orbit-space}
  \begin{aligned}
    X_G &:= X\times_{G}\E G\\
    &:=(X\times \E G)/G
  \end{aligned}
\end{equation}
Here, \(\E G\to \B G\) is the universal principal \(G\)-bundle over the classifying space \(\B G\) of \(G\), and \(G\) acts diagonally on \(X\times \E G\). Associated with the universal principal \(G\)-bundle, we have a fibre bundle with fibre \(X\):
\begin{equation}\label{eq:Borel-fibration}
  X \overset{j}\hookrightarrow X_G \xrightarrow{p} \B G
\end{equation}
\cite[Chapter~IV, \S\,3.3]{Borel:Transformation}.
This fibre bundle is usually referred to as the \define{Borel fibration}.

\begin{rem}\label{rem:CW-structure-on-X_G}
  For a finite \(G\)-CW complex \(X\) under a compact Lie group \(G\), the Borel construction can be performed within the framework of spaces homotopy equivalent to CW complexes.  This can be seen from the following facts; we assume \(G\) to be a compact Lie group throughout:
    \begin{compactenum}[(i)]
  \item
    The universal \(G\)-space \(\E G\) can be constructed as a \(G\)-CW complex, see for example \cite[Theorem~1.9]{Lueck:Classifying}.
  \item
    The Cartesian product \(X\times Y\) of two \(G\)-CW complexes has the structure of a \((G\times G)\)-CW complex in a canonical way, at least if \(X\) is a finite \(G\)-CW complex \cite[(H)\,(i)]{Matumoto:Whitehead}.
    (The same statement should be true without any finiteness assumption porvided we work in a ``good'' category of topological spaces like compactly generated weak Hausdorff spaces.  But we have not found a reference for this statement, nor will we need this generality.)
  \item
    For \(X\) and \(Y\) as in the previous point, the \((G\times G)\)-CW complex \(X\times Y\) is \(G\)-homotopy equivalent to a \(G\)-CW complex with respect to the diagonal subgroup \(G\subset G\times G\).  This is a special case of \Cref{thm:G-CW-restriction}.
  \item
    The orbit space of a \(G\)-CW complex is an ordinary CW complex \cite[Proposition~1.6]{Matumoto:Whitehead}.
  \end{compactenum}
  So if we start with a finite \(G\)-CW complex \(X\), the product \(X\times \E G\) has the homotopy type of a \(G\)-CW complex by (i), (ii) and (iii), and by (iv) the Borel construction \(X_G\) is homotopy equivalent to an ordinary CW complex.
\end{rem}

\begin{rem}\label{rem:CW-structure-on-X_G-not-finite}
  It is not possible, on the other hand, to perform this construction within the category of spaces homotopy equivalent to \emph{finite} CW complexes.  Even when \(X\) is a single point and \(G\) is a finite or compact connected Lie group like \(G=\zz_2\) or \(G=S^1\), we have \(X_G = \B G\) homotopy equivalent to \(\rr P^\infty\) or \(\cc P^\infty\), respectively, which both have (co)homology in arbitrarily high degrees.
\end{rem}
For simplicity, we continue to assume that \(G\) is a compact Lie group in the following, and that \(X\) is \(G\)-homotopy equivalent to a finite \(G\)-CW complex.  The \textbf{Borel equivariant rational singular cohomology} of \(X\) is simply defined as \(\H^*_G(X;\qq) := \H^*(X_G;\qq)\).  For example \(\H^*_G(\point;\qq) = \H^*(\B G;\qq)\). Of course, the same definition works for arbitrary coefficients.  But we will deploy no coeffients other than \(\qq\) for singular cohomology in this paper, nor will we consider other equivariant versions of singular cohomology.  So we will refer to this theory simply as \textbf{equivariant cohomology}. The \textbf{Borel equivariant K-theory} of \(X\) is defined analogously as \(\K^*(X_G)\), or, with rational coefficients, as \(\K^*(X_G;\qq)\).  This theory needs to be carefully distinguish from the genuine equivariant K-theory \(\K_G^*(X)\) of \Cref{SUBSEC:genuine-equivariant-K-theory}, as we will see below.

The inclusion of the fibre \(j\) in the Borel fibration \eqref{eq:Borel-fibration} induces a natural transformation in any Borel equivariant theory, which we will refer to as the \textbf{Borel forgetful map}.  The three incarnations we are mainly interested in are:
\begin{align*}
  u^\borel :=&\K^*(j)\colon \K^*(X_G) \to \K^*(X)\\
  u_{\qq}^\borel := &\K^*(j;\qq)\colon \K^*(X_G;\qq) \to \K^*(X;\qq)\\
             &\H^*(j;\qq)\colon \H^*_G(X;\qq)\to \H^*(X;\qq)
\intertext{%
                                 It is difficult to find a universally consistent notation for these maps.  In this section, we want to relate these Borel forgetful maps to the genuine forgetful maps
                                 }
  u \colon &\K^*_G(X)\to \K^*(X)\\
  u_\qq \colon &\K^*_G(X;\qq)\to \K^*(X;\qq)
\end{align*}
introduced in \Cref{SUBSEC:genuine-equivariant-K-theory}, so we will mostly stick to the decorated versions of \(u\).  In \Cref{sec:cohomo1_rational_cohomo}, where we deal exclusively with singular cohomology but study the surjectivity of various induced maps, we will use the notation \(\H^*(j) := \H^*(j;\qq)\).  The restriction of $\H^*(j)$ to all even degrees or to a single degree $i$ will be denoted $\H^\even(j)$ or $\H^i(j)$, respectively.

\subsection{Surjectivity in genuine versus Borel equivariant K-theory}
\label{SUBSEC:surjectivity-in-genuine-versus-Borel-equivariant-K-theory}
There is a canonical, well-studied, natural and degree-preserving transformation \(\gamma_G\colon \K_G^*(X)\to \K^*(X_G)\): it sends the class of a \(G\)-vector bundle \(F\) over \(X\) to the class of the vector bundle \((F \times \E G)/G\) over \(X_G = (X\times \E G)/G\).  We will observe in the proof of \Cref{u-surjective-versus-u-borel-surjective} that this transformation is compatible with the two forgetful transformations \(u\) and \(u^\borel\) in the sense that the following square commutes:
\begin{equation}\label{diag:u-gamma-u-borel}
  \begin{aligned}
    \xymatrix{
      {\K^*_G(X)} \ar[r]^{u} \ar[d]^{\gamma_G} & {\K^*(X)} \ar[d]^{\gamma_1}_{=} \\
      {\K^*(X_G)} \ar[r]^{u^\borel} & {\K^*(X)}
    }
  \end{aligned}
\end{equation}
The subscript \(1\) of \(\gamma_1\) here indicates the trivial group; \(\gamma_1\) is the identity.  For non-trivial \(G\), the transformation \(\gamma_G\) is \emph{not} generally an isomorphism. The Atiyah--Segal Completion Theorem asserts that, for any compact Lie group \(G\) and any finite \(G\)-CW complex \(X\), the map \(\gamma_G\) can be identified with the completion of \(\K_G^*(X)\) at the rank zero ideal \(\I_G := \ker(\rank\colon \R(G)\to \zz)\), i.e.\ at the kernel of the rank homomorphism.\footnote{
  We refer to the homomorphism \(\R(G)\to \zz\) that sends a representation to its dimension as the ``rank homomorphism'' since we view a \(G\)-representation as a \(G\)-vector bundle over a point, and the dimension of the former coincides with the rank of the latter.
}  Explicitly, writing \(\I_G^n\) to denote the \(n^\text{th}\) power of the ideal \(\I_G\), \(\gamma_G\) induces an isomorphism as follows:
\begin{equation}\label{eq:ASC-lim}
  \lim_n\left(\K^*_G(X) \middle/ (\I_G^n\cdot \K^*_G(X)) \right) \xrightarrow{\;\cong\;} \K^*(X_G)
\end{equation}
In particular,  \(\K^*(\point_G) = \K^*(\B G)\) is isomorphic to the \(\I_G\)-completion of \(\K_G^*(\point) \cong \R(G)\), recall $\K_G^1(\point)=0$.
\begin{example}[\(X = \point, G=S^1\)]
  The complex representation ring \(\R(S^1)\) can be identified with \(\zz[\alpha,(\alpha+1)^{-1}]\), where \(\alpha = z-1\) for the standard one-dimensional representation \(z\), and \(\I_{S^1}\) is the ideal generated by \(\alpha\) under this identification.  The K-ring of \(\B S^1 \simeq \cc P^\infty\) can be identified with the formal power series ring \(\zz\llbracket \alpha \rrbracket\), where \(\alpha = x-1\) for the tautological line bundle \(x\) over \(\cc P^\infty\).  Under these identifications, \(\gamma_{S^1}\) is the completion \(\zz[\alpha,(\alpha+1)^{-1}] \to \zz\llbracket \alpha \rrbracket\).   Note in particular that \(\gamma_{S^1}\) is not surjective.
\end{example}

The completion theorem also holds with rational coefficients, as we will quickly verify:
\begin{prop}[Rational completion theorem]\label{prop:rational-ASC}
  For any compact Lie group \(G\), and any finite \(G\)-CW complex \(X\),
  we have a canonical isomorphism
  \begin{equation}\label{eq:ASC-lim-rational}
    \lim_n\left(\K^*_G(X;\qq)\middle/ (\I_G^n\cdot \K^*_G(X;\qq)) \right) \xrightarrow{\;\cong\;} \K^*(X_G;\qq)
  \end{equation}
\end{prop}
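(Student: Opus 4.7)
The plan is to reduce the rational completion theorem \eqref{eq:ASC-lim-rational} to its integral counterpart \eqref{eq:ASC-lim} by a sequence of algebraic manipulations, finishing with a comparison between two rationalizations of $\K^*(X_G)$.

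First, using the identification $\K^*_G(X;\qq) \cong \K^*_G(X) \otimes \qq$ from \eqref{eq:finite-rationalization} together with the flatness of $\qq$ over $\zz$, we can rewrite
\[
\K^*_G(X;\qq)\,\big/\,\bigl(\I_G^n \cdot \K^*_G(X;\qq)\bigr) \;\cong\; \bigl(\K^*_G(X)\big/\I_G^n \K^*_G(X)\bigr) \otimes \qq.
\]
Hence the LHS of \eqref{eq:ASC-lim-rational} is $\lim_n \bigl[(\K^*_G(X)/\I_G^n \K^*_G(X)) \otimes \qq\bigr]$. The crucial input now is Segal's finiteness theorem: under our hypotheses, $\K^*_G(X)$ is a finitely generated module over the Noetherian representation ring $\R(G)$. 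Combined with the observation that the augmentation ideal $\I_G$ contains no non-zero integer, this allows $\otimes \qq$ to be interchanged with the $\I_G$-adic inverse limit (via Artin--Rees and the fact that completion equals $-\otimes_{\R(G)}\widehat{\R(G)}_{\I_G}$ on finitely generated modules). We thus obtain $\bigl[\lim_n \K^*_G(X)/\I_G^n \K^*_G(X)\bigr] \otimes \qq$. Invoking \eqref{eq:ASC-lim} identifies the bracketed limit with $\K^*(X_G)$, so the LHS of \eqref{eq:ASC-lim-rational} becomes $\K^*(X_G) \otimes \qq$.

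The remaining, more delicate step is to show $\K^*(X_G) \otimes \qq \cong \K^*(X_G;\qq)$. As noted in \Cref{rem:CW-structure-on-X_G-not-finite}, $X_G$ is in general not of finite CW type, so this is not automatic from \eqref{eq:finite-rationalization}. The plan is to filter $\E G$ by finite-dimensional $G$-CW subcomplexes $\E G^{(m)}$, obtaining an exhaustive filtration of $X_G$ by the finite CW complexes $X_G^{(m)} := X \times_G \E G^{(m)}$. Both $\K^*(-)$ and $\K^*(-;\qq)$ then sit in Milnor short exact sequences over this filtration. I expect this comparison to be the main obstacle: one must verify that $\{\K^*(X_G^{(m)})\}$ is Mittag--Leffler, which should follow by matching the transition maps, via the integral Atiyah--Segal theorem applied level by level, with the surjective quotient maps in the $\I_G$-adic tower of the finitely generated $\R(G)$-module $\K^*_G(X)$. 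Once Mittag--Leffler is confirmed integrally, the $\lim^1$ terms vanish in both Milnor sequences; since each $X_G^{(m)}$ is finite we have $\K^*(X_G^{(m)};\qq) \cong \K^*(X_G^{(m)}) \otimes \qq$, and passing to the inverse limit along surjective transition maps (where $\otimes\qq$ and $\lim$ commute for this particular pro-system, again by the finite generation over $\R(G)$) yields the required identification. Combining the two steps gives the canonical isomorphism in \eqref{eq:ASC-lim-rational}.
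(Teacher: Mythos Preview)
Your strategy factors the desired isomorphism through the intermediate object \(\K^*(X_G)\otimes\qq\), but this object is in general isomorphic to \emph{neither} side of \eqref{eq:ASC-lim-rational}.  Both your step~2 (interchanging \(\otimes\,\qq\) with the \(\I_G\)-adic limit) and your step~4 (identifying \(\K^*(X_G)\otimes\qq\) with \(\K^*(X_G;\qq)\)) fail already for \(G=\zz_2\) and \(X=\point\).  In that case \(\R(\zz_2)=\zz[t]/(t^2+2t)\) with \(t=\sigma-1\), and \(\I_G^n=(2^{n-1}t)\).  After tensoring with \(\qq\) the element \(2^{n-1}t\) becomes a unit multiple of \(t\), so each quotient collapses to \(\qq\) and the left-hand side of \eqref{eq:ASC-lim-rational} is \(\qq\); likewise \(\K^*(\B\zz_2;\qq)\cong\H^{**}(\rr P^\infty;\qq)=\qq\).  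But the integral completion \(\K^0(\B\zz_2)\) has reduced part \(\varprojlim\zz/2^n=\widehat{\zz}_2\), so \(\K^0(\B\zz_2)\otimes\qq\) contains a copy of \(\qq_2\) and is strictly larger than \(\qq\).  The ``\(\I_G\) contains no nonzero integer'' observation does not rescue this: the tower \(\{\zz/2^n\}\) has surjective transition maps and each term is a cyclic \(\R(\zz_2)\)-module, yet \(\otimes\,\qq\) and \(\lim\) do not commute on it.

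The paper avoids this trap by never passing through \(\K^*(X_G)\otimes\qq\).  It invokes the pro-group form of Atiyah--Segal completion, \(\{\K^*_G(X)/\I_G^n\K^*_G(X)\}_n\cong\{\K^*(X_{G,\alpha})\}_\alpha\), tensors this \emph{pro-isomorphism} with \(\qq\) (which is harmless, as it is done termwise), and only then takes the inverse limit.  Mittag--Leffler on the left transfers to the right via the pro-isomorphism, so \(\lim^1\) vanishes; since each \(X_{G,\alpha}\) is finite one has \(\K^*(X_{G,\alpha};\qq)=\K^*(X_{G,\alpha})\otimes\qq\), and Anderson's version of the Milnor sequence identifies the limit on the right directly with \(\K^*(X_G;\qq)\).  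The moral: rationalise the pro-system before taking limits, not after.
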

\begin{proof}
  Note that we cannot obtain \eqref{eq:ASC-lim-rational} simply by tensoring \eqref{eq:ASC-lim} with \(\qq\), because tensoring does not commute with the inverse limit, and because the (non-equivariant version of) the isomorphism \eqref{eq:finite-rationalization} does not hold for the generally infinite complex \(X_G\) (see \Cref{rem:CW-structure-on-X_G-not-finite}).  Instead, we need to start from a slightly stronger formulation of the completion theorem in terms of pro-groups.
  Recall from \Cref{rem:CW-structure-on-X_G} that \(X_G\) can be obtained as a CW complex by choosing a \(G\)-CW complex \((X\times \E G)'\) that is \(G\)-homotopy equivalent to \(X\times \E G\), and dividing out the \(G\)-action.  Let \(X_{G,\alpha}\) denote the filtration of \(X_G\) by its finite subcomplexes.  By \cite[Theorem~2.1]{AtiyahSegal} or, more precisely, by \cite[Corollary~2.1]{AHJM}, we have an isomorphism of pro-groups
  \begin{equation}\label{eq:ASC-pro}
    \left\{\K^*_G(X)\middle/ (\I_G^n\cdot \K^*_G(X)) \right\}_n \xrightarrow{\;\cong\;}  \{ \K^*(X_{G,\alpha})\}_\alpha
  \end{equation}
  Here, the pro-group on the left is simply indexed by the natural numbers, while the pro-group on the right is indexed by the directed system of finite subcomplexes of \(X_G\).  We have identified the groups \(\K^*_G((X\times \E G)'_\alpha)\) appearing in the sources with the groups \(\K^*(X_{G,\alpha})\), using \eqref{eq:K_G-of-free}.

  The isomorphism \eqref{eq:ASC-lim} is obtained by passing to the inverse limit. As the pro-group on the left satisfies the Mittag--Leffler condition, the derived limit functor \(\lim^1\) of the pro-group on the left vanishes.  It follows that \(\lim^1\) also vanishes for the pro-group on the right.  Anderson's generalization of Milnor's exact sequence \cite[Proposition~12]{ArakiYosimura}  therefore allows us to identify the inverse limit of the pro-group on the right with \(\K^*(X_G)\).

  We now tensor \eqref{eq:ASC-pro} with \(\qq\).
  By assumption, \(X\) is a finite \(G\)-CW complex, so we can apply \eqref{eq:finite-rationalization}.  Similary, on the right the non-equivariant version of \eqref{eq:finite-rationalization} applies to each of the finite CW complexes \(X_{G,\alpha}\).  Thus, altogether, we obtain an isomorphism of pro-groups
  \begin{equation*}
    \left\{\K^*_G(X;\qq)\middle/ (\I_G^n\cdot \K^*_G(X;\qq)) \right\}_n \xrightarrow{\;\cong\;}  \{ \K^*(X_{G,\alpha};\qq)\}_\alpha
  \end{equation*}
   from which we can conclude as before.
\end{proof}

The completion theorem allows us to relate the genuine forgetful map and the Borel forgetful map as follows:
\begin{prop}\label{u-surjective-versus-u-borel-surjective}
  Fix a degree~\(d\).   For a finite \(G\)-CW complex \(X\), the genuine forgetful map \(u\colon \K^d_G(X)\to \K^d(X)\) is surjective if and only if the Borel forgetful map \(u^\borel\colon \K^d(X_G)\to \K^d(X)\) is surjective.
  The same is true with rational coefficients.
\end{prop}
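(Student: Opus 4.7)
The plan is to exploit the Atiyah--Segal completion theorem together with the observation that both forgetful maps annihilate the augmentation ideal \(\I_G\subseteq \R(G)\). The easy implication (surjectivity of \(u\) forces surjectivity of \(u^\borel\)) is immediate from the commutative square \eqref{diag:u-gamma-u-borel}: since \(\gamma_1\) is the identity, \(u^\borel\circ \gamma_G = u\), and hence \(\im u\subseteq \im u^\borel\).

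For the converse, I would first observe that \(u\) kills \(\I_G\cdot \K^*_G(X)\). Indeed, for a virtual representation \([V]\in \I_G\) (i.e.\ with \(\rank V = 0\)) and any \(\alpha\in \K^*_G(X)\), the class \([V]\cdot \alpha\) maps under \(u\) to \((\rank V)\cdot u(\alpha)=0\), because forgetting the \(G\)-action on the trivial bundle \(X\times V\to X\) yields a trivial vector bundle of rank \(\rank V\). Hence \(u\) factors as \(\bar u\colon \K^*_G(X)/\I_G\cdot \K^*_G(X)\to \K^*(X)\), and the same argument produces an analogous factorization \(\bar u^\borel\colon \K^*(X_G)/\bar\I_G\cdot \K^*(X_G)\to \K^*(X)\), where \(\bar\I_G\) denotes the image of \(\I_G\) under \(\R(G)\to \K^0(\B G)\to \K^0(X_G)\).

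The decisive step is to identify the sources of \(\bar u\) and \(\bar u^\borel\) via the completion theorem. Since \(\K^*_G(X)\) is a finitely generated module over the Noetherian ring \(\R(G)\) (both facts due to Segal), the Atiyah--Segal theorem identifies \(\gamma_G\) with the \(\I_G\)-adic completion map; the standard commutative algebra identity \(M/IM\cong M^\wedge_I/IM^\wedge_I\) for finitely generated \(M\) over Noetherian \(R\) then yields a natural isomorphism
\[
\K^*_G(X)\big/\I_G\cdot \K^*_G(X)\xrightarrow{\;\cong\;} \K^*(X_G)\big/\bar\I_G\cdot \K^*(X_G)
\]
intertwining \(\bar u\) and \(\bar u^\borel\). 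Consequently \(u\) is surjective iff \(\bar u\) is iff \(\bar u^\borel\) is iff \(u^\borel\) is. For the rational case, one repeats the argument with \Cref{prop:rational-ASC} in place of the integral completion theorem, using that \(\K^*_G(X;\qq)\) is finitely generated over the Noetherian ring \(\R(G)\otimes\qq\).

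The main technical point I expect to require care is the commutative algebra identity invoked above, which in the present setting amounts to verifying that \(\I_G\cdot \K^*_G(X)\) and \(\bar\I_G\cdot \K^*(X_G)\) correspond under \(\gamma_G\) after passing to the quotient; this is where finite generation and Artin--Rees enter essentially. Once this identification is in place, the remainder of the proof is essentially formal bookkeeping, and the rational variant goes through verbatim given the rational completion theorem already in hand.
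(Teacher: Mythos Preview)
Your argument is correct and follows essentially the same approach as the paper: both proofs use the commutative square~\eqref{diag:u-gamma-u-borel} for the easy direction, the observation that \(u\) annihilates \(\I_G\cdot\K^*_G(X)\), and the Atiyah--Segal completion theorem for the converse.

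There is a minor difference in packaging worth noting. You pass through the commutative-algebra identity \(M/IM\cong \widehat{M}/I\widehat{M}\) (valid for finitely generated \(M\) over Noetherian \(R\), via Artin--Rees), thereby identifying the sources of \(\bar u\) and \(\bar u^\borel\). The paper instead works directly with the inverse-limit description of \(\K^*(X_G)\): since \(\K^*(X)\) is \(\I_G\)-adically complete (indeed \(\I_G\cdot\K^*(X)=0\)), the map \(u^\borel\) is the completion of \(u\), and explicitly sends a compatible sequence \((x_1,x_2,\dots)\) to \(u(\tilde x_n)\) for any lift \(\tilde x_n\in\K^d_G(X)\) of \(x_n\). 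Surjectivity of \(u\) then follows by lifting \(x_1\). This sidesteps the need to invoke Artin--Rees separately, though of course the completion theorem itself already uses the Noetherian and finite-generation hypotheses you cite. Your route is more structural; the paper's is marginally more direct.
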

\begin{proof}
  We first verify that square~\eqref{diag:u-gamma-u-borel} commutes.  Let \(p_X\colon X\times \E G\to X\) denote the projection onto the first factor. As the \(G\)-action on \(X\times \E G\) is free, by \eqref{eq:K_G-of-free} we have a canonical isomorphism \(\K^*(X_G) \cong \K^*_G(X\times \E G)\).  The map \(\gamma_G\) can be identified with the composition of \(p_X^*\) with the inverse of this isomorphism, as indicated by the left triangle in the following expanded version of square~\eqref{diag:u-gamma-u-borel}:
  \begin{equation*}
    \xymatrix{
      {\K_G^*(X)} \ar[dd]_{\gamma_G}\ar[dr]_{p_X^*} \ar[rrr]^{u} &&& {\K^*(X)} \ar[dl]^{p_X^*}_{\cong} \ar@{=}[dd]^{\gamma_1}\\
        & {\K_G^*(X\times \E G)} \ar[r]^{u} & {\K^*(X\times \E G)} \ar[dr]^{i^*}_{\cong} \\
      {\K^*(X_G)} \ar[ur]^{\cong}\ar[urr]_{\pi^*} \ar[rrr]_{j^* = u^\borel} &&& {\K^*(X)}
    }
  \end{equation*}
  In this expanded square, \(\pi\colon X\times \E G \to X_G\) is the canonical projection onto the orbit space, and \(i\colon X\to X\times \E G\) is the inclusion of \(X\) at a basepoint of \(\E G\).
  All subdiagrams of the square are easily seen to commute.  So square \eqref{diag:u-gamma-u-borel} commutes as claimed.

  All morphisms in \eqref{diag:u-gamma-u-borel} preserve degrees.  As the vertical map on the right is the identity, the surjectivity of \(u\) in a given degree clearly implies the surjectivity of \(u^\borel\) in the same degree.

  For the converse implication, recall from \Cref{SUBSEC:genuine-equivariant-K-theory} that the \(\R(G)\)-module structure on \(\K^*_G(X)\) is induced by the pullback map \(p^*\colon \K^*_G(\point)\to\K^*_G(X)\) along the projection to a point and by the canonical identification \(\K^*_G(\point)\cong \R(G)\), where \(\R(G)\) is concentrated in degree zero.
  If we equip \(\K^*(X_G)\) and \(\K^*(X)\) with the \(\R(G)\)-module structures induced by the maps \(\gamma_G\) and \(u\), then clearly \eqref{diag:u-gamma-u-borel} becomes a square of \(\R(G)\)-module homomorphisms.  As we said, \(\gamma_G\) is the \(\I_G\)-adic completion morphism.  (Here and above, we follow the common convention that `complete' means `complete and Hausdorff'.)

  The \(\R(G)\)-module \(\K^*(X)\) is also complete.  In fact, \(\I_G^n\cdot \K^*(X)=0\) for all \(n\).  Indeed, under the identifications  \(\K^*_G(\point)\cong \R(G)\) and \(\K^*(\point)\cong \zz\), the forgetful map \(u\colon \K^*_G(\point)\to \K^*(\point)\) gets identified with the rank homomorphism \(\rank\colon \R(G)\to \zz\), so \(up^*(\I_G) = p^*u(\I_G) = 0\), and hence \(\I_G^n\cdot\K^*(X) = (u p^*\I_G)^n\cdot \K^*(X) = 0\) as claimed.

  By the universal property of completion \cite[23H]{Matsumura1980},
  it follows that \(u^\borel\) is the \(\I_G\)-adic completion of \(u\). Explicitly, if we identify the completion with the inverse limit \(\lim_n \K^*_G(X)/(\I_G^n\cdot \K^*_G(X))\) as in \eqref{eq:ASC-lim},
  this means that the map \(u^\borel\) in degree \(d\) is simply given by
  \begin{align*}
    \lim_n \left(\K^d_G(X) \middle/ (\I_G^n\cdot \K^d_G(X)) \right) &\to \K^d(X)\\
    (x_1,x_2,x_3,\dots) &\mapsto u(x_n),
  \end{align*}
  for any fixed \(n\). This shows that the surjectivity of \(u^\borel\) in degree \(d\) implies the surjectivity of~\(u\) in degree \(d\).

  The rational statement can be obtained in the same way, using \eqref{eq:ASC-lim-rational} in place of  \eqref{eq:ASC-lim}.
\end{proof}

\subsection{Surjectivity in K-theory versus cohomology}
\label{SUBSEC:surjectivity-in-K-theory-versus-cohomology}
The Chern character can be constructed as a natural transformation of \(\zz_2\)-graded (or, equivalently, \(2\)-periodic \(\zz\)-graded) cohomology theories \(\K^*(-) \to \H^{**}(-;\qq)\) on the category of finite CW complexes.  Here \(\H^{**}(-;\qq)\) denotes the \emph{product} (not the direct sum) of all rational cohomology groups, regarded as a \(\zz_2\)-graded theory with the product of all even-degree cohomology groups in degree zero, and the product of all odd-degree cohomology groups in degree one.  The induced natural transformation on rational K-theory
\begin{equation}\label{eq:Chern-character}
  \ch\colon \K^*(X;\qq) \to \H^{**}(X;\qq)
\end{equation}
is known to be an isomorphism on any finite CW complex \(X\) \cite[Theorem in \S\,2.4]{AtiyahHirzebruch}.  In fact:
\begin{prop}\label{prop:Chern-character-iso}
  The rational Chern character \eqref{eq:Chern-character} can be defined and is a ring isomorphism for any CW complex \(X\).
\end{prop}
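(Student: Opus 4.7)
The plan is to extend the classical finite-complex case to arbitrary CW complexes using the spectrum-level splitting of rational K-theory. Recall that the complex K-theory spectrum has rational homotopy groups $\qq$ in every even degree and $0$ in odd degrees. Since any rational spectrum splits as a product of its rationalized homotopy groups viewed as shifted Eilenberg--MacLane spectra (via the rationalized Postnikov tower), the rational K-theory spectrum is stably equivalent to $\prod_{n \in \zz} \Sigma^{2n} H\qq$, where $H\qq$ denotes the rational Eilenberg--MacLane spectrum. Applying the functor $[X, -]$ and using the identification $[X, \prod_n Y_n] \cong \prod_n [X, Y_n]$, valid for any CW complex $X$, yields for each degree $k$:
\begin{equation*}
\K^k(X;\qq) \cong \prod_{n \in \zz} \H^{k+2n}(X;\qq),
\end{equation*}
which is precisely $\H^{**}(X;\qq)$ in degree $k$ modulo~$2$.

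It remains to identify this abstract isomorphism with the classically defined Chern character. The Chern character defined on vector bundles via $\ch(E) = \rk(E) + c_1(E) + \tfrac{1}{2}(c_1(E)^2 - 2c_2(E)) + \cdots$ yields a natural multiplicative transformation $\ch\colon \K^*(-;\qq) \to \H^{**}(-;\qq)$ of cohomology theories on CW complexes. By Atiyah--Hirzebruch it is a ring isomorphism on every finite CW complex. Since both source and target are representable cohomology theories on all CW complexes, and every CW complex $X$ is a colimit of its finite subcomplexes, naturality forces $\ch$ on $X$ to agree (up to canonical isomorphism) with the spectrum-level splitting described above, hence to be a ring isomorphism. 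An alternative route, avoiding the spectrum language, is to fix a CW complex $X$ and apply Milnor's $\lim^1$ short exact sequence to the directed system of finite subcomplexes of $X$ simultaneously for $\K^*(-;\qq)$ and $\H^{**}(-;\qq)$; naturality of $\ch$ and the Atiyah--Hirzebruch isomorphism at each finite stage then yield, via the five lemma, the desired isomorphism on $X$.

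The main technical subtlety lies in the rational spectrum-level splitting and in identifying the resulting abstract isomorphism with the classical Chern character formula. In the alternative approach, the corresponding subtlety is the verification that $\H^{**}(-;\qq)$, defined as an infinite product (rather than a direct sum) over all degrees, is represented by a product of shifted Eilenberg--MacLane spectra, and hence fits into a genuine Milnor short exact sequence compatible with products. Both versions of the key input are standard, but their deployment is precisely what is needed to upgrade the finite-complex statement of Atiyah--Hirzebruch to the full claim.
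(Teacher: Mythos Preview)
Your argument is correct, and your alternative via Milnor's \(\lim^1\) sequence coincides with the paper's stated alternative (citing Anderson's generalization).  The primary routes differ slightly: the paper invokes Adams' version of Brown representability to lift the Chern character, already defined on finite complexes, to a map of spectra, and then observes (following Neeman) that since this map induces isomorphisms on homotopy groups it is a weak equivalence, hence an isomorphism on all CW complexes.  You instead first produce an \emph{abstract} equivalence via the rational Postnikov splitting of the K-theory spectrum, and then argue that \(\ch\) must coincide with it.  The Postnikov step is illuminating but not strictly needed: once you grant that \(\ch\) is a natural transformation between representable cohomology theories on all CW complexes, the same spectrum-level reasoning (iso on spheres \(\Rightarrow\) weak equivalence of spectra \(\Rightarrow\) iso everywhere) finishes the proof directly.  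One small imprecision: your phrase ``naturality forces \(\ch\) on \(X\) to agree with the spectrum-level splitting'' overstates things---two natural transformations need not agree just because both are isomorphisms on finite complexes.  What you actually need (and what the paper's Brown-representability step supplies) is that \(\ch\) is itself represented by a map of spectra, which is then seen to be a weak equivalence; you don't need it to equal your Postnikov splitting.
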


\begin{rem}[Integral failure]
For a finite CW-complex $X$ it follows that $\K^0(X;\qq)$ is isomorphic to $\H^\even(X;\qq)$ as abelian groups, and so is $\K^1(X;\qq)$ to $\H^\odd(X;\qq)$. This does not hold integrally: take for example the homogeneous (exceptional symmetric) space $M:=G_2/SO(4)$. The cohomology groups are $\H^\even(M;\zz)=\zz\oplus\zz\oplus\zz_2\oplus\zz$ in degrees $0,4,6$ and $8$ respectively and $\H^\odd(M;\zz)=\zz_2$ in degree $3$, see \cite[17.3]{BoHi58}. In contrast, we know from \cite[Theorem 3.6]{GonZib1} that $\K^0(M)$ is torsion free (and hence must equal $\zz^3$) and $\K^1(M)=0$.  The claim in \cite[\S\,4.5]{Park:K-theory} that \(\K^*(M)\) and \(\H^*(M;\zz)\) are isomorphic as groups for arbitrary compact manifolds \(M\) is incorrect.
\end{rem}
\begin{rem}[Čech variant]\label{rem:Karoubis-Chern-character}
  Recall our notation \(\K^*_{\text{geom}}\) from \Cref{warning}.
  Karoubi states in \cite[Theorem~3.25]{Karoubi:K-theory} that \(\K^*_{\text{geom}}(X)\otimes\qq \cong \smash{\check{\H}}^*(X;\qq)\) for any compact Hausdorff space \(X\), where \(\smash{\check{\H}}^*\) denotes Čech cohomology.  This does not contradict the results involving singular cohomology here, as Čech cohomology differs from singular cohomology in the same way that \(\K_{\text{geom}}^*\) differs from \(\K^*\).
\end{rem}
\begin{proof}[Proof of \Cref{prop:Chern-character-iso}]
  We include a proof for lack of reference.  The statement is well-known, and is a typical application of Brown representability, in the version proved by Adams \cite[Theorem 1.3 and Addenda 1.4 and 1.5]{Adams:Brown}.  The following discussion is based on the presentation in \cite[first page]{Neeman:Brown}.\footnote{
    Neeman describes cohomology theories as functors on the category of (finite) spectra; the equivalence with more down-to-earth descriptions in terms of (finite) CW complexes follows from the fact that the full subcategory of finite spectra in the category of all spectra is equivalent to the Spanier--Whitehead category on finite CW complexes.
  }
  By the first part of the representablity theorem, both cohomology theories \(\K^*(-;\qq)\) and \(\H^{**}(-;\qq)\) are representable by spectra.
  By the second part of the representability theorem, the Chern character is induced by a (non-unique) morphism between these spectra.  In particular, the Chern character extends to a natural transformation of the given cohomology theories viewed as cohomology theories on arbitrary CW complexes.  Moreover, by the observation after Remark~3.2 of \cite{Neeman:Brown}, the morphism of spectra is an isomorphism, hence so is the corresponding natural transformation.
  Alternatively, given any appropriate construction of the rational Chern character on arbitrary CW complexes, we could again use Anderson's generalization of Milnor's short exact sequence \cite[Proposition~12]{ArakiYosimura} to conclude that it is an isomorphism in general from the fact that it is an isomorphism on finite complexes.
\end{proof}

\begin{prop}\label{surjectivity_of_Heven_iff_K0}
For a finite \(G\)-CW complex \(X\), the rational genuine forgetful map \(\K^0_G(X;\qq)\to \K^0(X;\qq)\) is surjective if and only if the forgetful map in even-degree Borel equivariant rational singular cohomology \(\H^\even_G(X;\qq)\to \H^\even(X;\qq)\) is surjective.  Likewise, the forgetful map \(\K^1_G(X;\qq)\to \K^1(X;\qq)\) is surjective if and only if the forgetful map \(\H^\odd_G(X;\qq)\to \H^\odd(X;\qq)\) is surjective.
\end{prop}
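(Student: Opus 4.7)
The plan is to bolt together two ingredients that have already been established in the excerpt: the reduction from the genuine to the Borel forgetful map in rational K-theory (\Cref{u-surjective-versus-u-borel-surjective}) and the rational Chern character isomorphism (\Cref{prop:Chern-character-iso}). I will treat the even-degree case explicitly; the odd case is identical with $\even$ replaced by $\odd$ throughout.

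First, by the rational part of \Cref{u-surjective-versus-u-borel-surjective}, surjectivity of the genuine forgetful map $u_\qq\colon \K^0_G(X;\qq)\to \K^0(X;\qq)$ is equivalent to surjectivity of the Borel forgetful map $u^\borel_\qq\colon \K^0(X_G;\qq)\to \K^0(X;\qq)$. So it suffices to compare $u^\borel_\qq$ with $\H^\even(j;\qq)$, where $j\colon X\hookrightarrow X_G$ is the fibre inclusion of the Borel fibration.

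Second, by \Cref{prop:Chern-character-iso} the Chern character is a natural isomorphism of $\zz_2$-graded cohomology theories $\K^*(-;\qq)\cong\H^{**}(-;\qq)$ on arbitrary CW complexes. Since both $X$ (by \Cref{thm:G-CW-restriction}) and $X_G$ (by \Cref{rem:CW-structure-on-X_G}) have the homotopy type of CW complexes, applying naturality to $j$ gives a commutative square
\[
\xymatrix{
  \K^0(X_G;\qq) \ar[r]^{u^\borel_\qq} \ar[d]_{\ch}^{\cong} & \K^0(X;\qq) \ar[d]_{\ch}^{\cong} \\
  \prod_{i\geq 0} \H^{2i}(X_G;\qq) \ar[r]^{\prod_i \H^{2i}(j;\qq)} & \prod_{i\geq 0}\H^{2i}(X;\qq)
}
\]
Because $X$ is a finite CW complex, the right-hand product is a finite direct sum equal to $\H^\even(X;\qq)$. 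Hence surjectivity of $u^\borel_\qq$ is equivalent to surjectivity of the product map $\prod_i \H^{2i}(j;\qq)$ with target $\H^\even(X;\qq)$.

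Finally, a short formal argument shows that surjectivity of this product map is equivalent to surjectivity of $\oplus_i\H^{2i}(j;\qq) = \H^\even(j;\qq)\colon \H^\even_G(X;\qq)\to \H^\even(X;\qq)$: both are equivalent to surjectivity of $\H^{2i}(j;\qq)$ in each individual degree $i$, since the target has only finitely many non-zero summands. Hitting elements supported in one component gives one direction, and lifting componentwise gives the other. The main subtlety — and essentially the only point requiring real attention — is the mismatch between the product $\prod_i \H^{2i}(X_G;\qq)$ produced by the Chern character (recall from \Cref{rem:CW-structure-on-X_G-not-finite} that $X_G$ is typically infinite-dimensional) and the direct sum $\H^\even_G(X;\qq) = \oplus_i \H^{2i}(X_G;\qq)$ appearing in the statement. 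The last step resolves this cleanly because finite-dimensionality of $X$ forces the target to be a finite sum, which kills any distinction between products and sums on the receiving side and makes degreewise surjectivity accessible.
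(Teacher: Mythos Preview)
Your proof is correct and follows essentially the same approach as the paper: reduce to the Borel forgetful map via \Cref{u-surjective-versus-u-borel-surjective}, then use the naturality of the Chern character isomorphism (\Cref{prop:Chern-character-iso}) together with the finiteness of $X$ to identify surjectivity of $u^\borel_\qq$ with surjectivity of $\H^\even(j;\qq)$. If anything, you are slightly more careful than the paper in explicitly addressing the product-versus-sum issue on the $X_G$ side.
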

\begin{proof}
  Recall from \Cref{u-surjective-versus-u-borel-surjective} that, in K-theory, the rational genuine forgetful map \(u_\qq\) is surjective in degree \(d\) if and only if the rational Borel forgetful map \(u^\borel_\qq\) is surjective in the same degree \(d\).  It suffices to observe that, in addition,
  \(u^\borel_\qq = \K^*(j;\qq)\) is surjective in degree zero if and only if \(\H^\even(j)\colon \H^\even(X_G;\qq)\to \H^\even(X;\qq)\) is surjective, and is surjective in degree one if and only if  \(\H^\odd(j)\colon \H^\even(X_G;\qq)\to \H^\odd(X;\qq)\) is surjective.  This is immediate from the following commutative diagram, \Cref{prop:Chern-character-iso} and the way in which the Chern character preserves even\slash odd degrees.
  \[\xymatrix@C=9em{
      {\K^*(X_G;\qq)} \ar[r]^{\K^*(j;\qq)}  \ar[d]^{\chern}& {\K^*(X;\qq)} \ar[d]^{\chern}\\
      {\H^{**}_G(X;\qq)} \ar[r]^{\H^{**}(j;\qq)} & {\H^{**}(X;\qq)}
    }\]
  Note that the direct product of cohomology groups \(\H^{**}(X;\qq)\) here is really just a direct sum, as \(X\) is homotopy equivalent to a finite CW complex by assumption and by \Cref{thm:G-CW-restriction}.
\end{proof}


\section{Rational homotopy and equivariant cohomology of biquotients and cohomogeneity one spaces}
\label{sec:cohomo1_rational_cohomo}

Having established the connection between the forgetful maps in equivariant rational K-theory and in equivariant rational Borel cohomology in Proposition \ref{surjectivity_of_Heven_iff_K0}, this section is now devoted to an in-depth investigation of the map on the cohomological side. Nonetheless, this section may be of independent interest, as it does provide several generalizations of the classical term ``equivariant formality''.

The main tool we use is rational homotopy theory. \Cref{secpre} contains the needed preliminaries and discusses pure spaces. In \Cref{SEC:surjectivity_cohomology} we study the surjectivity properties we are interested in. This is applied to homogeneous and cohomogeneity one spaces in \Cref{SEC:applications_biquotients_cohomogeneity1}, where we provide the proofs of Theorems \ref{THM:surjectivity of homogeneous bundles}, \ref{THM:double_stabilization_homogeneous}, \ref{THM:rational_stabilization_cohomo1} and Part~(\ref{THM: rational converse soul for cohomo 1}) of \Cref{THM: big theorem}.

\subsection{Preliminaries and main tools}\label{secpre}

This subsection is divided in three parts: first we review basic notions from rational homotopy theory, then we illustrate them in the case of biquotients and finally we study pure models.

\subsubsection{Basics, ellipticity, formality, pureness and spherical cohomology}

We recall some basic definitions from rational homotopy theory which we shall draw on in the subsequent sections. All the details can be found in \cite{FHT01,FOT08}.

In the following discussion $X$ will always denote a nilpotent path-connected CW complex. Additional assumptions on such a space $X$ (e.g.\ rational ellipticity) will be made explicitly. Recall that $X$ is said to be nilpotent if $\pi_1(X,\point)$ is a nilpotent group and acts nilpotently (via covering transformations) on each $\pi_n(\tilde X,\point)$, where $\tilde X$ denotes the universal covering space of $X$.

\medskip

A \define{Sullivan algebra} $(\Lambda V,\dif)$ is a free commutative graded algebra $\Lambda V:=\wedge V^\odd\otimes \qq[V^\even]$ on a graded \(\qq\)-vector space $V$ (concentrated in positive degrees), together with a differential. The differential $\dif\colon V\to \Lambda V$ is a map of degree $1$ extended to $\Lambda V$ as a derivation. It satisfies a nilpotence condition  \cite[p.~138]{FHT01}. We denote by $\Lambda^q V$ the linear span of elements of the form $v_1\wedge\dots\wedge v_q$ with $v_i\in V$ and $q\geq 2$, and we denote $\Lambda^{\geq q} V:=\oplus_{i\geq q}\Lambda^i V$, see \cite[p.~140]{FHT01}. The algebra is called \define{minimal} if $\im \dif \in \Lambda^{\geq 2} V$, i.e.~if the differential maps into the subspace generated by the so-called decomposable elements.
The algebra is said to be \define{contractible} if it is of the form $(\Lambda (U\oplus\delta U),\delta)$ with $\delta:U\to\delta U$ an isomorphism.
Any Sullivan algebra is a tensor product of a minimal one and a contractible one \cite[Theorem~14.9, p.~187]{FHT01}.

A (minimal) Sullivan algebra $(\Lambda V,\dif)$ is a \define{(minimal) Sullivan model} of the cochain algebra $(A,\dif)$ if there is a \define{quasi-isomorphism}, i.e.~a morphism $(\Lambda V,\dif)\xto{} (A,\dif)$ of cochain algebras inducing an isomorphism in cohomology. A (minimal) Sullivan algebra is a \define{(minimal) Sullivan model} of a space $X$ if it is a (minimal) Sullivan model for the cochain algebra $\APL(X)$ of polynomial differential forms on $X$, see \cite[Chapter 10, p.~115]{FHT01}.

A \define{connected cochain algebra}, i.e.~a commutative differential graded algebra $(A=A^{\geq 0},\dif)$ with $\H^0(A)=\qq$, has a minimal Sullivan model unique up to isomorphism \cite[Theorem~2.24, p.~64]{FOT08}. Hence any path-connected space $X$ possesses a minimal Sullivan model \((\Lambda V, \dif)\) unique up to isomorphism.
\begin{theo}[Main theorem of rational homotopy theory {\cite[Theorem~2.50, p.~75]{FOT08}}]\label{thmmainrht}
  Let $(\Lambda V,\dif)$ be a minimal model of a space $X$ with $\H^*(X;\qq)$ finite-dimensional in each degree. Then
  \[
  V^{\geq 2}\cong \Hom(\pi_{\geq 2}(X),\qq)
  \]
\end{theo}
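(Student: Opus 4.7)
The plan is to prove this classical statement by identifying the minimal Sullivan model of $X$ with an algebraic incarnation of the rational Postnikov tower via Sullivan's spatial realization functor. The hypothesis that $X$ is nilpotent of finite $\qq$-type ensures that the rationalization $X_\qq$ is well-defined, and since the conclusion only concerns $V^{\geq 2}$ and $\pi_{\geq 2}(X)$, the argument can sidestep the delicate interaction between $\pi_1$ and higher homotopy.

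First, I would recall the spatial realization $\langle - \rangle$ right-adjoint to $\APL$, which sends a CDGA $(A,\dif)$ to the simplicial set whose $n$-simplices are CDGA-morphisms $(A,\dif)\to \APL(\Delta^n)$. The crucial technical ingredient is the homotopy-group computation: for any minimal Sullivan algebra $(\Lambda V,\dif)$ of finite type, one has
\[
\pi_n(\langle \Lambda V,\dif\rangle) \;\cong\; \Hom(V^n,\qq) \qquad \text{for } n\geq 2.
\]
This is proved by induction on a Sullivan filtration of $V$: at each stage one adjoins a single generator $v$ of degree $n$, which geometrically corresponds either to forming a principal fibration with fibre the rational Eilenberg--MacLane space $K(\qq,n)$ (when $\dif v=0$) or to killing a cohomology class in degree $n+1$ (when $\dif v\neq 0$). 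The long exact sequence of homotopy groups for the associated fibration then accounts for each new $v\in V^n$ contributing exactly one generator to $\Hom(\pi_n,\qq)$.

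Second, by definition of a minimal model there is a quasi-isomorphism $(\Lambda V,\dif)\to \APL(X)$, and standard Sullivan theory shows that $\langle \APL(X)\rangle$ is weakly equivalent to the $\qq$-localization $X_\qq$ while $\langle - \rangle$ sends quasi-isomorphisms between finite-type minimal Sullivan algebras to weak equivalences. Combining this with the defining property of rationalization, namely $\pi_n(X_\qq)\cong\pi_n(X)\otimes\qq$ for $n\geq 2$, yields the chain of natural isomorphisms
\[
V^{\geq 2} \;\cong\; \Hom(\pi_{\geq 2}(\langle \Lambda V,\dif\rangle),\qq) \;\cong\; \Hom(\pi_{\geq 2}(X_\qq),\qq) \;\cong\; \Hom(\pi_{\geq 2}(X),\qq).
\]

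The main obstacle is the inductive homotopy-group computation of the first step, where one must verify the naturality of the pairing between $V^n$ and $\pi_n$ under the differential $\dif\colon V\to \Lambda^{\geq 2}V$. The minimality condition $\dif V\subset \Lambda^{\geq 2}V$ is essential here: it guarantees that $\dif$ has no linear part, so that the indecomposables $V$ record precisely the new homotopy added in each degree, and no spurious cancellation occurs across successive stages of the Sullivan filtration.
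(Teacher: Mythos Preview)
The paper does not supply a proof of this statement: it is quoted verbatim as a background result with a citation to \cite[Theorem~2.50, p.~75]{FOT08}, and is then used as a black box to translate between the graded vector space \(V\) underlying a minimal model and the rational homotopy groups of the space. There is therefore no ``paper's own proof'' to compare against.

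Your sketch is the standard textbook argument (essentially the one in \cite{FOT08} and \cite{FHT01}): compute the homotopy groups of the Sullivan spatial realization \(\langle \Lambda V,\dif\rangle\) inductively along a Sullivan filtration, then use that the quasi-isomorphism \((\Lambda V,\dif)\to\APL(X)\) realizes to a rationalization \(X\to X_\qq\simeq \langle \Lambda V,\dif\rangle\). One small point worth tightening: you first assert \(\pi_n(\langle \Lambda V,\dif\rangle)\cong \Hom(V^n,\qq)\) and then conclude \(V^n\cong \Hom(\pi_n,\qq)\); this double-dualization is legitimate precisely because the finite-type hypothesis on \(\H^*(X;\qq)\) forces each \(V^n\) to be finite-dimensional, so you should invoke that explicitly. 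Otherwise the outline is correct and is exactly the approach the cited reference takes.
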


A minimal Sullivan algebra $(\Lambda V, \dif)$ is called \define{elliptic} if both $V$ and $\H(\Lambda V, \dif)$ are finite-dimensional. Its \define{Euler characteristic} is defined as the alternating sum of the ranks of the ranks of the cohomology groups, and its \define{formal dimension} is the largest degree $d$ in which $\H^d(\Lambda V, \dif)$ is non-trivial.
A space $X$ with an elliptic minimal model is called \define{rationally elliptic}. If, additionally, the Euler characteristic $\chi(X)$ is positive, then the algebra respectively the space are called \define{positively elliptic}. In view of Theorem~\ref{thmmainrht} rational ellipticity of a space $X$ is equivalent to finite-dimensional rational cohomology and the property that from some degree on all homotopy groups are finite.

Beside the usual (cohomological) Euler characteristic used above, in the rationally elliptic situation we can also make the following definition: The \define{homotopy Euler characteristic} of an elliptic Sullivan algebra $(\Lambda V,\dif)$ is given by
$$
\chi_\pi(\Lambda V,\dif):= \dim V^\odd - \dim V^\even
$$
For a rationally elliptic space $X$ its homotopy Euler characteristic $\chi_\pi(X)$ is defined as the one of a Sullivan model of $X$.
The homotopy Euler characteristic does not depend on the choice of the Sullivan model.\COMMENT{MZ: I think this follows from the tensor product decomposition of a Sullivan model into a minimal one and a contratible one quoted above, but we don't need to write this.}
In particular, by \Cref{thmmainrht} it agrees with the alternating sum of the dimensions of rational homotopy groups (for say simply-connected $X$). Moreover, with the given sign convention, it is known that both the homotopy Euler characteristic and the usual Euler characteristic of a rationally elliptic space are non-negative and one is equal to zero if and only if the other one is not \cite[Proposition~32.10, p.~444]{FHT01}.

A minimal Sullivan algebra $(\Lambda V,\dif)$ is called \define{formal} if it comes with a quasi-isomorphism $(\Lambda V,\dif)\to \H(\Lambda V,\dif)$. A space $X$ is formal if so is its minimal Sullivan model.

A Sullivan algebra $(\Lambda V,\dif)$ admits a \define{two-stage decomposition} if it admits a homogeneous decomposition $V=V_0\oplus V_1$ such that
\begin{itemize}
\item[-] $\dif V_0=0$, and
\item[-] $\dif V_1\In \Lambda V_0$
\end{itemize}
A Sullivan algebra $(\Lambda V,\dif)$ is \define{pure} if it admits a two-stage decomposition with \(V_0=V^\even\) and \(V_1=V^\odd\).
(Unlike \cite{FHT01}, we do not insist that \(V\) be finite-dimensional for a pure Sullivan algebra.)
A space $X$ with a two-stage/pure model is called two-stage/pure. (In this case also a minimal model will be two-stage/pure.)
As any Sullivan model contains a minimal one as a tensor factor, any pure Sullivan algebra has a pure minimal model.

\begin{example}\label{EX:pure_elliptic}
Homogeneous spaces and biquotients are pure, rationally elliptic spaces, see \Cref{biq_pure}. In \Cref{props1s1} we verify the pureness of those cohomogeneity one manifolds with the spheres $K_\pm/H$ being odd dimensional. Recall that due to \cite{GH87} cohomogeneity one manifolds are rationally elliptic. More generally, pure spaces of finite-dimensional cohomology are rationally elliptic, see \Cref{rem:pure-ellipticity}.
\end{example}

On a pure elliptic Sullivan algebra $(\Lambda V,\dif)$ there exists, additionally to the upper grading in the cohomology ring $\H(\Lambda V,\dif)$,  the \define{lower grading}
\begin{equation}\label{EQ:lower_grading}
\H(\Lambda V,\dif)=\bigoplus_{i\geq 0} \H_i(\Lambda V,\dif)
\end{equation}
where $\H_i(\Lambda V,\dif)$ is the subspace representable by cocycles in $\Lambda (V^\even)\otimes \Lambda^i (V^\odd)$, see \cite[p.~435]{FHT01}.
These subspaces are invariant under quasi-isomorphism.  That is, given a quasi-isomorphism of pure Sullivan algebras \((V,\dif)\to (W,\dif)\), the subspace \(\H_i(\Lambda V,\dif)\subset \H(\Lambda V,\dif)\) agrees with subspace \(\H_i(\Lambda W,\dif)\subset \H(\Lambda W,\dif)\) under the identification \(\H(\Lambda V,\dif)\cong \H(\Lambda W,\dif)\).  (This again follows from the decomposition of any Sullivan algebra as a tensor product of a minimal one and a contractible one.)  By definition, we have the identities
\begin{equation}\label{EQ:even_odd_upper_lower}
    \H_\even(\Lambda V,\dif)=\H^\even(\Lambda V,\dif),\qquad \H_\odd(\Lambda V,\dif)=\H^\odd(\Lambda V,\dif)
\end{equation}

\begin{rem}\label{rem:pure-ellipticity}
  As mentioned in \Cref{EX:pure_elliptic}, a pure space of finite-dimensional cohomology is automatically rationally elliptic. For this, let \((\Lambda V,\dif)\) be a pure minimal model with finite \(\H(\Lambda V,\dif)\); we need to show that \(\dim V<\infty\). Note that finite-dimensionality of \(\H_0(\Lambda V,\dif)\) and minimality imply that \(\dim V^\even<\infty\), hence that \(\Lambda V^\even\) is noetherian and hence that the ideal in \(\Lambda V^\even\) generated by \(\dif(\Lambda V^\odd)\) is equal to the ideal generated by \(\dif(\Lambda V^\odd_f)\) for some finite-dimensional subspace \(V^\odd_f\subset V^\odd\). Now observe that \(\H_1(\Lambda V,\dif)\) contains a sub-vector space isomorphic to a complement of \(V^\odd_f\) in \(V^\odd\). So finite-dimensionality of \(\H_1(\Lambda V,\dif)\) implies finite-dimensionality of \(V^\odd\).
\end{rem}

Let \((\Lambda V,\dif)\) be a minimal Sullivan algebra. We will call \define{spherical cohomology} the set of all cohomology elements for which there exists an (isomorphic) minimal Sullivan model \((\Lambda V',\dif)\) in which they are represented by an element of \(V'\). As the linear part of an isomorphism of minimal Sullivan models is an isomorphism on the underlying vector spaces, (non-trivial) spherical cohomology cannot be represented by a decomposable element (i.e.~by an element in \(\Lambda ^{\geq 2} V\)). If \((\Lambda V,\dif)\) is a minimal model which is additionally pure, then the subalgebra generated by all spherical cohomology classes of even-degree coincides with \(\H_0(\Lambda V,\dif)\).

In general, minimal Sullivan models are only unique up to isomorphism and spherical cohomology may be larger than the subspace of $V$ of closed forms. For example, $(\Lambda \langle a,x,y\rangle, \dif)$ with $\deg a=2$, $\deg x=\deg y=3$ and $\dif x=\dif y=a^2$ has spherical cohomology generated by $a$ and $x-y$. (The model clearly is isomorphic to $(\Lambda \langle a,x,y\rangle, \dif')$ with $\dif' a=\dif' x=0$, $\dif' y=a^2$.) An isomorphism on models induces one on cohomologies, by which we identify corresponding elements.

\subsubsection{Lie groups and biquotients}\label{SEC:preliminaries_lie_biquotients}

Recall that an H-space is a connected topological space with a ``multiplication'' having a neutral element up to homotopy.
Let us quickly review minimal models of H-spaces. 
This relies heavily on the following:

\begin{theo*}[Hopf]
  For an H-space \(X\) such that $\H_*(X;\qq)$ is finite-dimensional in each degree, $\H^*(X;\qq)$ is a free commutative graded algebra.
\end{theo*}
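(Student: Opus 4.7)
The plan is to produce on $\H^*(X;\qq)$ the structure of a connected, graded-commutative Hopf algebra and then invoke the classical Hopf--Borel structure theorem. So first I would set up the comultiplication. The H-space multiplication $\mu\colon X\times X\to X$ induces a ring map $\mu^*\colon \H^*(X;\qq)\to \H^*(X\times X;\qq)$, and under the degree-wise finite-dimensionality hypothesis the rational K\"unneth theorem gives a natural isomorphism $\H^*(X\times X;\qq)\cong \H^*(X;\qq)\otimes \H^*(X;\qq)$. Composing, one obtains a coproduct $\Delta:=\mu^*$ which, together with the cup product and the counit coming from the homotopy unit, turns $\H^*(X;\qq)$ into a connected graded Hopf algebra over $\qq$. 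Graded commutativity of the cup product is automatic, and the existence of a homotopy unit forces $\Delta$ to be of the form $\Delta(x)=x\otimes 1+1\otimes x+\sum x'\otimes x''$ with the summands of positive degree in both factors (i.e.\ $x$ is primitive modulo decomposables).

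Second, I would appeal to Hopf's theorem in the form: any graded-commutative, connected Hopf algebra over a field of characteristic zero, with each graded piece finite-dimensional, is free as a graded-commutative algebra, i.e.\ a tensor product $\Lambda(x_1,x_2,\dots)\otimes\qq[y_1,y_2,\dots]$ of an exterior algebra on odd-degree generators and a polynomial algebra on even-degree generators. Granting this, the claim follows immediately. To be self-contained one could sketch the standard inductive argument: pick, in each degree, a basis of the indecomposables $Q\H^*(X;\qq)=\H^{>0}/(\H^{>0}\cdot\H^{>0})$ and lift it to elements of $\H^*(X;\qq)$; show by induction on degree that these lifts satisfy no relations beyond graded-commutativity. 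The key ingredient at each inductive step is that any non-trivial polynomial relation among the chosen generators would, after applying $\Delta$ and comparing primitive parts, produce a relation of strictly lower degree, a contradiction; here one uses crucially that $\operatorname{char}\qq=0$, so that for an even-degree generator $y$ of degree $2n$ and any $k\geq 1$ the binomial coefficients appearing in $\Delta(y^k)$ do not vanish.

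The main obstacle is the structure theorem itself, i.e.\ justifying that no polynomial relations can appear among the indecomposable generators. The characteristic-zero hypothesis is essential here (in positive characteristic one only obtains the Borel structure theorem with truncated polynomial factors), and the finite-dimensionality of each graded piece is what allows the degree-by-degree induction to proceed and what makes the K\"unneth identification of $\H^*(X\times X;\qq)$ with $\H^*(X;\qq)^{\otimes 2}$ available in the first place. Once the Hopf algebra structure is in place, however, the rest is a purely algebraic statement that can be quoted from, e.g., \cite{FHT01} or the original sources of Hopf and Borel.
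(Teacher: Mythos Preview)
Your proposal is correct and follows the classical route: endow $\H^*(X;\qq)$ with a connected graded Hopf algebra structure via the K\"unneth identification and the multiplication $\mu$, then invoke the Hopf--Borel structure theorem in characteristic zero (or sketch its proof via the indecomposables and the coproduct). There is nothing to compare against, however: the paper does not supply a proof of this statement. It is quoted as Hopf's classical theorem and used as a black box to conclude that $(\H^*(X;\qq),0)$ is a minimal Sullivan model of the H-space $X$. Your argument is precisely the standard justification one would give if asked to unpack the citation.
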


Since H-spaces are \define{simple spaces}, i.e.~their fundamental group is abelian acting trivially on higher homotopy groups, it follows that the free algebra $(\Lambda V_X,0):=(\H^*(X;\qq),0)$  constitutes a minimal Sullivan model of $X$. If the formal dimension of $X$ (i.e. of $(\Lambda V_X,0)$) is finite it also follows that $\H^*(X;\qq)$ as an algebra is generated in odd degrees, i.e.~$V_X=V_X^\odd$, as otherwise cohomology could not be finite-dimensional.

A compact connected Lie group $G$ is an H-space, and its minimal Sullivan model $(\Lambda V_G,0)$ is known to satisfy $\dim V_G=\rk G$. Note that this directly yields a model for the classifying space $\B G$. Indeed, since $V_G$ is dual to rational homotopy groups, from the long exact sequence of rational homotopy groups of the classifying fibration $G\hto{} \E G\to \B G$ together with the contractibility of $\E G$, we derive that $V_G^{+1}=V_{\B G}$. That is, a model of $\B G$ can be obtain from the model of $(\Lambda V_G,0)$ of $G$ by a degree shift of $+1$ on $V_G$. Since then $V_{\B G}=V_G^{+1}$ is concentrated in even degrees, also the differential upon it is necessarily $0$ and a minimal Sullivan model for $\B G$ is given by $(\Lambda V_G^{+1},0)$. Observe that both $G$ and $\B G$ are formal.

Recall the notion of biquotient. Let $G$ be a compact Lie group and let $H\In G\times G$ be a closed subgroup. Then $H$ acts on $G$ on the left by $(h_1,h_2)\cdot g=h_1gh_2^{-1}$. The orbit space is called the \define{biquotient} $\biq{G}{H}$ of $G$ by $H$. If the action of $H$ on $G$ is free then $\biq{G}{H}$ possesses the structure of a smooth manifold and there is an associated principal $H$-bundle $H\hto{}G \to \biq{G}{H}$. This is the only case we shall consider, and moreover we assume that $G$ and $H$ (and consequently $\biq{G}{H}$) are connected. It is well known that biquotients are rationally elliptic spaces, and in particular nilpotent.  We also know that
\begin{equation}\label{EQ:parity_dim_biquotient}
\dim \biq{G}{H}=\dim G-\dim H\equiv\rk G-\rk H \mod 2
\end{equation}
since $\H^*(G)$ and $\H^*(H)$ are freely generated by $\rk G$ and $\rk H$ many generators in odd degrees. 
Recall from \cite[Theorem~2.75, p.~85]{FOT08} that:
\begin{equation}\label{EQ:positive_elliptic_biquotients}
    \H^\odd(\biq{G}{H})= 0\Leftrightarrow \rk G=\rk H
\end{equation}
Clearly, the category of biquotients contains the one of homogeneous spaces: for a subgroup \(H\subset G\), the homogeneous space \(G/H\) can be identified with the biquotient \(\biq{G}{(\{e\}\times H)}\), where \(\{e\}\) denotes the trivial subgroup.

\smallskip

Let us now review a model of $\biq{G}{H}$. In \cite[p.~3]{Kap}, \cite{Sin93}, \cite[Section~3.4.2, p.~137]{FOT08} the reader may find the construction of the following (usually highly non-minimal) Sullivan model of $\biq{G}{H}$. From now on we will use the following notation: if the elements $x_1,\dots, x_n$ form a basis of a vector space $V$ then we will write $\langle x_1,\dots, x_n\rangle$ or $\langle x_i\rangle_{1\leq i\leq n}$ for $V$.

\begin{theo}\label{Sullivan-for-biquotient}
A Sullivan model of the 
biquotient $\biq{G}{H}$ of compact connected Lie groups $G$, $H$, is given by
\begin{align*}
(\Lambda (V_{\B H}\oplus V_G),\dif)
\end{align*}
with $(\Lambda V_{\B H},0)$ a model for $\B H$ and $(\Lambda V_G,0)$ with $V_G=\langle q_1, \dots, q_k\rangle$ a model for $G$ with $k=\rk G$. The differential $\dif$ is defined by $\dif|_{V_{\B H}}=0$ and by
\begin{align*}
\dif(q_i)=\H^*(\B j)(x_i\otimes 1- 1\otimes y_i)
\end{align*}
where $\H^*(\B j)\colon \H^*(\B G\times \B G)\to{}  \H^*(\B H)$ is induced by the inclusion map $j\colon H\hto{} G\times G$ and $x_i$ respectively $y_i$ are formal copies of the $q_i$.
\end{theo}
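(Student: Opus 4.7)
The plan is to realize $\biq{G}{H}$ as a homotopy pullback of classifying spaces, construct a relative Sullivan model for one of its legs, and then base-change that model. First I would view $G$ as a $(G\times G)$-space via left-right multiplication; since the isotropy of this action is the diagonal $\Delta G\cong G$, the Borel construction $\E(G\times G)\times_{G\times G}G$ is homotopy equivalent to $\B(\Delta G)\simeq \B G$, giving a fibration $G\hto{}\B G\xto{\B\Delta}\B G\times \B G$. Restricting the $(G\times G)$-action on $G$ to $H$ along $j\colon H\hto{} G\times G$ yields a free $H$-action whose Borel construction $\E H\times_H G$ is homotopy equivalent to the biquotient $\biq{G}{H}$. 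These two Borel constructions fit into a pullback square
\[
  \xymatrix{
    \biq{G}{H}\ar[r]\ar[d] & \B G\ar[d]^{\B\Delta}\\
    \B H\ar[r]^-{\B j} & \B G\times \B G
  }
\]
exhibiting $\biq{G}{H}$ as the homotopy pullback of $\B\Delta$ along $\B j$ and simultaneously as the total space of a fibration $G\hto{}\biq{G}{H}\to \B H$ classified by $\B j$.

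Next I would construct a relative Sullivan model for $\B\Delta$. The base $\B G\times \B G$ is formal with model $(\Lambda V_{\B G}\otimes \Lambda V_{\B G},0)$, and the diagonal identifies both generators $x_i\otimes 1$ and $1\otimes y_i$ with the single generator $x_i$ of $(\Lambda V_{\B G},0)$. I would therefore adjoin odd generators $q_1,\dots,q_k$ (with $k=\rk G$) of degrees $\deg x_i-1$, with
\[
  \dif q_i \;=\; x_i\otimes 1 \;-\; 1\otimes y_i,
\]
and verify by a standard Koszul/spectral-sequence argument that the projection sending each $q_i$ to $0$ and identifying the two copies of $V_{\B G}$ is a quasi-isomorphism onto $(\Lambda V_{\B G},0)$. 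The fibre model of this relative Sullivan extension is $(\Lambda\langle q_i\rangle,0)$, a model for $G$, as expected.

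The last step would then be to pull this relative model back along the cdga morphism $\H^*(\B j)\colon (\Lambda V_{\B G\times \B G},0)\to(\Lambda V_{\B H},0)$, which is itself a Sullivan model of $\B j$ because both source and target are formal. Computing the pushout of cdgas gives
\[
  (\Lambda(V_{\B H}\oplus V_G),\dif),\qquad \dif|_{V_{\B H}}=0,\qquad \dif q_i = \H^*(\B j)(x_i\otimes 1-1\otimes y_i),
\]
which is exactly the algebra in the statement. The standard pullback/pushout correspondence for Sullivan models of nilpotent Serre fibrations (cf.\ \cite[Proposition~15.8]{FHT01}) then identifies this pushout as a Sullivan model of the homotopy pullback $\biq{G}{H}$.

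The main obstacle, and the non-formal step in the argument, is precisely this appeal to the pullback/pushout correspondence: one must replace $\B\Delta$ by an honest Serre fibration and verify that the chosen relative Sullivan extension is cofibrant. Both conditions are easily arranged in the present setting, since all classifying spaces are simply-connected, the biquotient is nilpotent, and the relative extension is by design a semi-free Sullivan inclusion. Once this is in place, the identification of the pushout with a Sullivan model of $\biq{G}{H}$ is immediate from standard rational homotopy theory.
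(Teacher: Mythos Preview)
Your argument is correct and is precisely the standard derivation: the paper does not supply its own proof of this theorem but simply refers the reader to \cite[p.~3]{Kap}, \cite{Sin93}, and \cite[Section~3.4.2]{FOT08}, all of which obtain the model exactly as you do, by realizing $\biq{G}{H}$ as the homotopy pullback of $\B\Delta\colon \B G\to \B G\times\B G$ along $\B j\colon \B H\to \B G\times\B G$ and base-changing the Koszul-type relative Sullivan model of $\B\Delta$. Your handling of the technical hypotheses (simply-connected bases, nilpotent total space, semi-free extension) is also appropriate.
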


Observe that in the model $(\Lambda V,\dif):=(\Lambda (V_{\B H}\oplus V_G),\dif)$ we have $V^\even=V_{\B H}$, $V^\odd=V_G$, thus:
\begin{equation}\label{EQ:homotopy_Euler_biquotient}
\chi_\pi(\biq{G}{H})=\dim V_{G}-\dim V_{\B H}=\rk G-\rk H
\end{equation}
Moreover, the model is clearly two-stage with $V_0=V_{\B H}$ (concentrated in even degrees) and $V_1=V_G$ (necessarily concentrated in odd degrees). Hence we may draw the obvious and well-known conclusion \cite[Proposition 7.1]{BelKap03}:
\begin{cor}\label{biq_pure}
Such a biquotient is a pure space, i.e.~the model above is pure.
\end{cor}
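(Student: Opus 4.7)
The plan is to verify directly that the Sullivan model constructed in \Cref{Sullivan-for-biquotient} satisfies the two conditions in the definition of purity with $V_0 = V^{\even}$ and $V_1 = V^{\odd}$. The proof is essentially a degree bookkeeping, so the only ``obstacle'' is to pin down the parity of the generators in $V_{\B H}$ and $V_G$.

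First, I would recall from the discussion preceding \Cref{Sullivan-for-biquotient} that since $G$ is a compact connected Lie group it is in particular an H-space whose rational cohomology is finite-dimensional, so by Hopf's theorem $\H^*(G;\qq)$ is a free graded-commutative algebra generated in odd degrees. Hence the minimal model $(\Lambda V_G,0)$ of $G$ has $V_G = V_G^{\odd}$. Dually, via the long exact sequence of rational homotopy groups of the universal bundle $H \hookrightarrow \E H \to \B H$, the generating space $V_{\B H}$ of a minimal model of $\B H$ is obtained from $V_H$ by a degree shift of $+1$, and is therefore concentrated in even degrees: $V_{\B H} = V_{\B H}^{\even}$.

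Consequently, in the Sullivan algebra $(\Lambda V, \dif) := (\Lambda(V_{\B H} \oplus V_G), \dif)$ of \Cref{Sullivan-for-biquotient} we have the equalities
\begin{equation*}
V^{\even} = V_{\B H}, \qquad V^{\odd} = V_G,
\end{equation*}
so the homogeneous decomposition $V = V_{\B H} \oplus V_G$ is precisely the parity decomposition. It remains to check that this decomposition is a two-stage decomposition. By construction $\dif|_{V_{\B H}} = 0$, so $\dif V^{\even} = 0$. Moreover, the differential on $V_G$ is given by $\dif(q_i) = \H^*(\B j)(x_i \otimes 1 - 1 \otimes y_i) \in \H^*(\B H;\qq)$, and since $(\Lambda V_{\B H},0)$ is a model for $\B H$ we may identify this image with an element of $\Lambda V_{\B H} = \Lambda V^{\even}$. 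Thus $\dif V^{\odd} \subseteq \Lambda V^{\even}$, which is exactly the second condition in the definition of a pure Sullivan algebra.

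This shows that the (generally non-minimal) Sullivan model in \Cref{Sullivan-for-biquotient} is pure. By the remark in \Cref{secpre} that any pure Sullivan algebra has a pure minimal model (since a minimal model sits inside it as a tensor factor), the minimal Sullivan model of $\biq{G}{H}$ is also pure, and hence $\biq{G}{H}$ is a pure space in the sense of the definition.
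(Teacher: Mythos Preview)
Your proof is correct and follows exactly the same reasoning as the paper: the corollary is stated immediately after the observation that the model is two-stage with \(V_0=V_{\B H}\) concentrated in even degrees and \(V_1=V_G\) in odd degrees, which is precisely the parity bookkeeping you carry out. Your additional remark that the minimal model is then also pure is a nice explicit confirmation of what the paper leaves implicit.
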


\subsubsection{Pure spaces and even-degree cohomology}
Let us now provide the main tools we will use in \Cref{SEC:surjectivity_cohomology,SEC:applications_biquotients_cohomogeneity1}. Our main focus lies on pure spaces, as these will comprise the examples we are interested in (see \Cref{EX:pure_elliptic}).

Recall the bigrading of the cohomology of a pure Sullivan algebra $(\Lambda V,\dif)$ described in \eqref{EQ:lower_grading}. For later applications we are interested in the size of $\H_0(\Lambda V,\dif)$, i.e. those cohomology elements represented by $\Lambda V^\even$. It is clear from \eqref{EQ:even_odd_upper_lower} that
\begin{align*}
\H_0(\Lambda V,\dif)\In \H^\even(\Lambda V,\dif)\In \H(\Lambda V,\dif)
\end{align*}
These inclusions do not have be strict by any means, and it is known that $\H_0(\Lambda V,\dif)= \H(\Lambda V,\dif)$ if and only if $\chi_\pi(\Lambda V,\dif)=0$, see \cite[Proposition 32.2(ii), p.~436]{FHT01}. The goal of the next results is to characterize when the first inclusion is strict. Let us first recall:

\begin{prop}\label{prop00}
Let $(\Lambda V,\dif)$ be a pure elliptic minimal Sullivan algebra. 
Then up to isomorphism it is of the form
\begin{align}\label{eqn08}
(\Lambda V,\dif)\cong (\Lambda V',\dif) \otimes (\Lambda\langle x_i\rangle_{1\leq i\leq k}, 0)
\end{align}
for some maximal $k\geq 0$ (possibly zero) and with $(\Lambda V',\dif)$ a pure minimal elliptic Sullivan algebra. (In particular, $\deg x_i$ is odd for all $1\leq i\leq k$.) Moreover, $(\Lambda V,\dif)$ is formal if and only if $(\Lambda V',\dif)$ has positive Euler characteristic, or, equivalently, vanishing homotopy Euler characteristic $\chi_\pi(\Lambda V',\dif)=0$.
\end{prop}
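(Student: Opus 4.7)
The plan is to construct the decomposition explicitly by peeling off all ``spherical'' odd generators of $V$, and then to derive the formality characterisation from the classical theory of $F_0$-algebras.

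\textbf{Decomposition.} The correct value of $k$ is the dimension of the spherical odd cohomology of $(\Lambda V,\dif)$. Crucially, this is \textbf{not} the same as $\dim \ker(\dif|_{V^\odd})$ for a fixed basis: for instance, with $V^\even = \langle a\rangle$ and $V^\odd=\langle b_1,b_2\rangle$, $\dif b_1 = a^2$, $\dif b_2 = a^3$, the substitution $b_2\mapsto b_2-ab_1$ produces a cocycle invisible in the original basis. After a suitable automorphism of $(\Lambda V,\dif)$, pick a basis $x_1,\dots,x_k$ of a subspace of $V^\odd$ consisting of genuine cocycles whose classes span the spherical odd cohomology. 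Complete it to a basis of $V^\odd$ by additional elements $y_j$, and set $V':=V^\even\oplus\langle y_j\rangle$ and $K:=\langle x_i\rangle$. The inclusions induce a CDGA morphism
\[
(\Lambda V',\dif)\otimes(\Lambda K,0) \longrightarrow (\Lambda V,\dif),
\]
which is an isomorphism of graded algebras and a chain map, since $\dif|_{V^\even}=0=\dif|_K$ and $\dif\langle y_j\rangle\subset \Lambda V^\even=\Lambda V'^\even$. Pureness and minimality of $(\Lambda V',\dif)$ are inherited, and ellipticity follows via the Künneth formula applied to this tensor decomposition.

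\textbf{Formality.} Since $(\Lambda K,0)$ is evidently formal and tensor factorisation preserves and reflects formality in the Sullivan context, $(\Lambda V,\dif)$ is formal iff $(\Lambda V',\dif)$ is. The equivalence $\chi(\Lambda V',\dif)>0\Leftrightarrow\chi_\pi(\Lambda V',\dif)=0$ is standard for elliptic algebras (cf.\ \cite[Proposition~32.10]{FHT01}). The direction ``$\chi_\pi=0\Rightarrow$ formal'' is the classical theorem that $F_0$-algebras are formal: when $\dim V'^\odd=\dim V'^\even$, the differentials $\dif V'^\odd$ form a regular sequence in $\Lambda V'^\even$, exhibiting $\H(\Lambda V',\dif)$ as a complete intersection and $(\Lambda V',\dif)$ as its Koszul resolution. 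Conversely, suppose $(\Lambda V',\dif)$ is formal with $\chi_\pi>0$; then $\chi=0$ and hence $\H^\odd(\Lambda V',\dif)\neq 0$. Formality identifies $(\Lambda V',\dif)$ with the minimal Sullivan model of $(\H(\Lambda V',\dif),0)$. The step-by-step construction of such a minimal model produces an odd cocycle generator for every odd indecomposable cohomology class, so after identifying the two presentations via an automorphism, $V'^\odd$ must contain a cocycle, yielding a non-zero spherical odd cohomology class and contradicting the maximality of $k$.

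\textbf{Main obstacle.} The principal subtlety is identifying $k$ correctly as the dimension of spherical odd cohomology, which is an invariant of the algebra, rather than as the basis-dependent $\dim\ker(\dif|_{V^\odd})$; extracting all spherical cocycles requires exploiting automorphisms of $(\Lambda V,\dif)$. The converse direction of the formality equivalence rests on this correct identification together with the structural correspondence, in minimal models of formal algebras, between odd indecomposable cohomology classes and odd cocycle generators.
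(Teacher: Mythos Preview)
Your proof is correct and follows essentially the same route as the paper, which refers to \cite[Lemma~2.5]{Ama13}: both obtain the decomposition by iteratively splitting off odd-degree spherical cohomology classes, and both reduce the formality claim to formality of the factor $(\Lambda V',\dif)$. Your treatment of the converse direction (formal $\Rightarrow \chi_\pi(\Lambda V',\dif)=0$) differs slightly: the paper invokes the characterisation ``formal $\Leftrightarrow$ the $\dif y_j$ form a regular sequence'' from \cite{Ama13}, whereas you argue directly that formality together with $\chi_\pi>0$ forces an odd indecomposable cohomology class, hence an odd cocycle generator after an automorphism, contradicting maximality of~$k$. Both arguments are valid; yours has the merit of making the role of maximality explicit.
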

In the remainder of the section we will use without further reference the splitting \eqref{eqn08} and the notation from \Cref{prop00}. We will also use that the homotopy Euler characteristic is additive under taking products, so that:
$$
\chi_\pi(\Lambda V,\dif)=\chi_\pi(\Lambda V',\dif)+\chi_\pi(\Lambda\langle x_i\rangle_{1\leq i\leq k}, 0)=\chi_\pi(\Lambda V',\dif)+k
$$

\Cref{prop00} is proved in \cite[Lemma~2.5]{Ama13}.  The decomposition is obtained by splitting off odd-degree spherical cohomology classes. Since $(\Lambda V,\dif)$ is pure, the subalgebra of \(\H(\Lambda V,\dif)\) that these elements generate is free.  For the second part of the proposition, recall that in general a product is formal if and only if so is every factor. The construction of the model \eqref{eqn08} in the proof of \cite[Lemma 2.5]{Ama13} reveals that $(\Lambda V',\dif)$ is formal if and only if the relations formed by the differentials of the odd-degree generators form a regular sequence. Since cohomology is finite-dimensional by assumption, this is the case if and only if $\dim (V')^\even=\dim (V')^\odd$, i.e. if and only if $\chi_\pi(\Lambda V',\dif)=0$ as stated in \Cref{prop00}.

Moreover, as mentioned above, the fact that $\chi_\pi(\Lambda V',\dif)=0$ implies in particular that $\H_0(\Lambda V',\dif)=\H^\even(\Lambda V',\dif)$.
This equality remains true for $(\Lambda V,\dif)$ if and only if $k\leq 1$, this can be seen using the K\"unneth formula for \eqref{eqn08}.
Thus, for a model \((\Lambda V,\dif)\) as in \Cref{prop00} which additionally is formal, we find that \(\H_0(\Lambda V,d)=\H^\even(\Lambda V,d)\) if and only if \(k\leq 1\), or, equivalently, if and only if \(\chi_\pi(\Lambda V,\dif)\leq 1\). Noting that \(\H_0\) and \(\chi_\pi\) are invariant under quasi-isomorphism, we may of course drop the minimality assumption:

\begin{cor}\label{corformm}
  For a formal pure elliptic Sullivan algebra \((\Lambda V,\dif)\),
  \[
  \H_0(\Lambda V,\dif)=\H^\even(\Lambda V,\dif) \iff \chi_\pi(\Lambda V,\dif)\leq 1
  \]
\end{cor}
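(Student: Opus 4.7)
Since both $\H_0(\Lambda V,\dif)$ and $\chi_\pi(\Lambda V,\dif)$ are invariant under quasi-isomorphism (as noted in the paragraph introducing the lower grading and in the definition of $\chi_\pi$), my first step would be to reduce to the case where $(\Lambda V,\dif)$ is minimal by replacing it with a minimal Sullivan model. The hypotheses of formality, pureness and ellipticity are preserved.

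The second step is to invoke Proposition \ref{prop00} to write
\begin{align*}
(\Lambda V,\dif)\cong (\Lambda V',\dif)\otimes (\Lambda\langle x_1,\dots,x_k\rangle,0),
\end{align*}
with $(\Lambda V',\dif)$ a pure minimal elliptic Sullivan algebra, each $x_i$ of odd degree, and $k$ maximal.  The formality hypothesis together with the second half of Proposition \ref{prop00} forces $\chi_\pi(\Lambda V',\dif)=0$, which by \cite[Proposition~32.2(ii), p.~436]{FHT01} (quoted in the passage after equation~\eqref{EQ:even_odd_upper_lower}) implies
\begin{align*}
\H_0(\Lambda V',\dif)=\H^\even(\Lambda V',\dif)=\H(\Lambda V',\dif).
\end{align*}
Additivity of the homotopy Euler characteristic under tensor products, already noted after Proposition \ref{prop00}, yields $\chi_\pi(\Lambda V,\dif)=\chi_\pi(\Lambda V',\dif)+k=k$, so the condition $\chi_\pi(\Lambda V,\dif)\leq 1$ translates into $k\leq 1$.

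The third and final step is a Künneth computation. Since each $x_i$ is odd, $\Lambda V^\even = \Lambda (V')^\even$, so the lower-degree-zero part satisfies $\H_0(\Lambda V,\dif)\cong \H_0(\Lambda V',\dif)\otimes 1$. On the other hand, since $\H(\Lambda V',\dif)$ is concentrated in even degrees, the Künneth formula gives
\begin{align*}
\H^\even(\Lambda V,\dif)\cong \H(\Lambda V',\dif)\otimes \bigl(\Lambda\langle x_1,\dots,x_k\rangle\bigr)^\even.
\end{align*}
The second tensor factor collapses to $\qq$ (concentrated in degree zero) exactly when $k\leq 1$; for $k\geq 2$ it contains the non-zero even-degree class $x_1\wedge x_2$. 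Combined with the previous step, this shows $\H_0(\Lambda V,\dif)=\H^\even(\Lambda V,\dif)\iff k\leq 1\iff \chi_\pi(\Lambda V,\dif)\leq 1$.

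I do not foresee a real obstacle; the corollary is essentially already contained in the discussion preceding it. The only point requiring minor care is the compatibility between the lower grading on $(\Lambda V,\dif)$ and on $(\Lambda V',\dif)$ under the decomposition of Proposition \ref{prop00}, but this is immediate because all of the extra generators $x_i$ are odd and hence do not enter $\Lambda V^\even$.
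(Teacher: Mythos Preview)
Your proof is correct and follows essentially the same route as the paper: reduce to the minimal case by quasi-isomorphism invariance, apply the decomposition of Proposition~\ref{prop00}, use formality to force $\chi_\pi(\Lambda V',\dif)=0$ (hence $\H_0(\Lambda V',\dif)=\H(\Lambda V',\dif)$), and finish with a K\"unneth argument showing the equality survives the tensor with $\Lambda\langle x_1,\dots,x_k\rangle$ precisely when $k\leq 1$. Your K\"unneth step is spelled out in slightly more detail than the paper's, but the logic is identical.
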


It is our goal to generalize this to the non-formal case, which will be done in \Cref{prop01} below. First we need the following result, which follows directly from \cite[Proposition 32.2 (i)-(ii), p.~436]{FHT01}.
\begin{lemma}\label{lemmatop}
Let $(\Lambda V,\dif)$ be a pure elliptic minimal Sullivan algebra.
We set $r:=\chi_\pi(\Lambda V',\dif)$ and we denote by $d'$ the formal dimension of $(\Lambda V',\dif)$. 
Then
\begin{align*}
\H_r^{d'}(\Lambda V',\dif)=\H^{d'}(\Lambda V',\dif)
\end{align*}
and is one-dimensional. 
\end{lemma}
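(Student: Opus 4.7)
The statement is announced in the excerpt as a direct consequence of \cite[Proposition~32.2(i)-(ii)]{FHT01}, so my plan is simply to verify that the hypotheses apply and to match each assertion of the lemma with one part of that proposition.

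The first step is to check that $(\Lambda V',\dif)$ is itself a pure elliptic minimal Sullivan algebra. Purity and minimality are built into the decomposition from \Cref{prop00}. Ellipticity is inherited from $(\Lambda V,\dif)$: by \eqref{eqn08} and the K\"unneth formula, $\H(\Lambda V,\dif)$ is the tensor product of $\H(\Lambda V',\dif)$ with the finite-dimensional free exterior algebra on the odd-degree generators $x_1,\dots,x_k$, so finite-dimensionality of the former forces finite-dimensionality of both factors. Together with \Cref{rem:pure-ellipticity} this means that Proposition~32.2 of \cite{FHT01} applies to $(\Lambda V',\dif)$ with formal dimension $d'$ and homotopy Euler characteristic $r$.

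The second step is to read off the two halves of the conclusion. Part~(i) of \cite[Proposition~32.2]{FHT01} is the Poincar\'e-duality statement for pure elliptic Sullivan algebras: $\H^{d'}(\Lambda V',\dif)$ is one-dimensional. Part~(ii) then pins down the lower bigrading of this top class: it is represented by a cocycle in $\Lambda (V')^{\even}\otimes \Lambda^{r}(V')^{\odd}$, so it lies in the subspace $\H_r^{d'}(\Lambda V',\dif)$. Combining these two facts yields the claimed equality $\H^{d'}(\Lambda V',\dif)=\H_r^{d'}(\Lambda V',\dif)$, together with one-dimensionality.

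No real obstacle arises here; the only mildly technical point is that ellipticity of $(\Lambda V',\dif)$ has to be extracted from the splitting \eqref{eqn08} rather than assumed, but the one-line K\"unneth argument above takes care of it. Everything else is a direct transcription of the cited proposition.
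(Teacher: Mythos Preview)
Your proposal is correct and follows exactly the paper's approach, which simply cites \cite[Proposition~32.2(i)--(ii)]{FHT01}; you have merely spelled out why the hypotheses are satisfied and how the conclusion matches. One small simplification: the ellipticity of $(\Lambda V',\dif)$ is already asserted in \Cref{prop00}, so the K\"unneth argument and the appeal to \Cref{rem:pure-ellipticity} are unnecessary.
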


Now we are ready to state the main result in this section:
\begin{prop}\label{prop01}
  For a pure elliptic minimal Sullivan algebra $(\Lambda V,\dif)$ as in \Cref{prop00},
  $$
  \H_0(\Lambda V,\dif)=\H^\even(\Lambda V,\dif) \iff  k+r \leq 1 \iff \chi_\pi(\Lambda V,\dif)\leq 1
  $$
  More precisely, if $\chi_\pi(\Lambda V',\dif)\leq 1$, then $\H^\even(\Lambda V',\dif)=\H_0(\Lambda V',\dif)$;
  if $\chi_\pi(\Lambda V',\dif) > 1$, there is an even-degree class $0\neq x\in \H_s(\Lambda V',\dif)$ with $s>0$, i.e.~a class not representable by $\Lambda V^\even$.\COMMENT{MA/DG: Both $\Lambda (V')^\even$ and $\Lambda V^\even$ are correct. Usually, we are interested in $\Lambda V^\even$.}
\end{prop}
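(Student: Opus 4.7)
To begin, I would reduce the main equivalence to the ``More precisely'' statement via K\"unneth on the decomposition \((\Lambda V,\dif) = (\Lambda V',\dif)\otimes (\Lambda\langle x_1,\ldots,x_k\rangle, 0)\) of \Cref{prop00}. The induced isomorphism \(\H(\Lambda V,\dif) \cong \H(\Lambda V',\dif)\otimes \Lambda\langle x_1,\ldots,x_k\rangle\) respects the lower grading, with each \(x_i\) contributing \(1\) to the lower grade. Hence \(\H_0(\Lambda V,\dif) = \H_0(\Lambda V',\dif)\), and by a short case analysis on~\(k\) the equality \(\H^\even(\Lambda V,\dif) = \H_0(\Lambda V,\dif)\) is seen to be equivalent to \(k+r\leq 1\): for \(k\geq 2\) the product \(x_1 x_2\) is a non-trivial even class in \(\H_2(\Lambda V,\dif)\) outside \(\H_0\); for \(k=1\) the equality further requires \(\H^\odd(\Lambda V',\dif)=0\), i.e.\ \(r=0\); and for \(k=0\) it reduces exactly to the ``More precisely'' statement for \((\Lambda V',\dif)\).

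For the ``More precisely'' part, the case \(r=0\) is immediate: \Cref{prop00} tells us \((\Lambda V',\dif)\) is formal, so \(\H(\Lambda V',\dif) = \H_0(\Lambda V',\dif)\), which is concentrated in even degrees by construction (as a quotient of \(R := \Lambda V'^\even\)). For \(r=1\), I would view \((\Lambda V',\dif)\) as the Koszul complex \(K_R(f_1,\ldots,f_{n+1})\) over \(R\), where \(f_i := \dif b_i\) and \(n=\dim V'^\even\). Since \(R/I\) is finite-dimensional, the ideal \(I=(f_1,\ldots,f_{n+1})\) has height \(n\); a graded change of basis of \(V'^\odd\) then lets us arrange that \((f_1,\ldots,f_n)\) forms a regular sequence. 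The Koszul complex factors as \(K_R(f_1,\ldots,f_n)\otimes_R K_R(f_{n+1})\), and the first factor is quasi-isomorphic to \(A := R/(f_1,\ldots,f_n)\); hence \((\Lambda V',\dif) \simeq (A\xrightarrow{\bar f_{n+1}} A)\), whose homology sits only in lower gradings \(0\) and \(1\). This gives \(\H_s(\Lambda V',\dif)=0\) for \(s\geq 2\) and in particular \(\H^\even(\Lambda V',\dif) = \H_0(\Lambda V',\dif)\).

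For \(r\geq 2\), I would combine \Cref{lemmatop} with Poincar\'e duality. The lemma yields a non-zero top class \(\omega \in \H_r^{d'}(\Lambda V',\dif)\), and since \(\omega\) is represented by an element of \(\Lambda V'^\even\otimes \Lambda^r V'^\odd\), one has \(d'\equiv r\pmod 2\). For even \(r\), \(\omega\) is itself a non-zero even-degree class in \(\H_r\) with \(r\geq 2\), and we are done. For odd \(r\geq 3\), the top class is odd-degree, so I would invoke the bigraded refinement of Poincar\'e duality for pure elliptic algebras: multiplication \(\H_s^i\otimes \H_{r-s}^{d'-i} \to \H_r^{d'}\cong\qq\) is non-degenerate. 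Taking any \(0\neq \alpha\in \H_1(\Lambda V',\dif)\) produces a partner \(\beta\in \H_{r-1}^{d'-\deg\alpha}\) with \(\alpha\beta=\omega\); the parity of \(d'\) and \(\deg\alpha\) (both odd) forces \(\deg\beta\) even, and \(\beta\) lies in lower grading \(r-1\geq 2\), as required.

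The main obstacle I anticipate is the odd-\(r\) subcase of the last step: one needs both the bigraded refinement of Poincar\'e duality and the non-vanishing of \(\H_1(\Lambda V',\dif)\). The latter ultimately reduces to a standard commutative-algebra statement, namely that the \(n+r\) homogeneous generators of an Artinian quotient ideal in a polynomial ring admit non-Koszul first syzygies whenever \(r\geq 1\); the rest of the argument is a careful bookkeeping of parities across the lower grading induced by purity.
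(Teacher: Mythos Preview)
Your overall strategy is sound, and for the hardest subcase (odd \(r\geq 3\)) it is actually more direct than the paper's. There the paper produces a class of intermediate lower degree by an inductive argument borrowed from the proof of \cite[Proposition~32.3]{FHT01}: one removes a single odd generator \(n_l\), observes that the remaining \(l-1\) differentials cannot be regular (since \(l-1\geq \dim (V')^\even + 2\)), takes a minimal-degree class \([y]\) in positive lower degree of the truncated algebra, and then runs the Serre spectral sequence of the rational sphere fibration to show \([y]\) survives in \(\H_{0<s<r}(\Lambda V',\dif)\). Your route avoids this: you note that \(\H_1(\Lambda V',\dif)\neq 0\) simply because \(n+r>n=\operatorname{depth} R\) forces the Koszul complex to have non-trivial first homology, and then the bigraded Poincar\'e duality pairing \(\H_1\times \H_{r-1}\to \H_r\cong\qq\) immediately yields a non-zero even-degree class in \(\H_{r-1}\). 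This is cleaner; the paper's detour through the spectral sequence buys nothing extra here.

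There is, however, a genuine gap in your \(r=1\) step. The claim that ``a graded change of basis of \(V'^\odd\) lets us arrange that \((f_1,\dots,f_n)\) forms a regular sequence'' can fail when the odd generators have pairwise distinct degrees, since then a graded change of basis is only a rescaling and reordering. For a concrete obstruction take \(R=\qq[y_1,y_2]\) with \(\deg y_i=2\) and \(f_1=y_1y_2\), \(f_2=y_1^2(y_1+y_2)\), \(f_3=y_2^3(y_1+y_2)\): these have degrees \(4,6,8\), generate an \(\mathfrak m\)-primary ideal, lie in \(\mathfrak m^2\), yet every pair shares a non-unit factor and hence no two form a regular sequence. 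The conclusion you want, \(\H_s(\Lambda V',\dif)=0\) for \(s\geq 2\), is nonetheless correct: it is exactly \cite[Proposition~32.2\,(i)]{FHT01} (equivalently, depth-sensitivity of the Koszul complex gives \(H_i=0\) for \(i>(n+1)-\operatorname{depth}(I,R)=1\)). The paper simply quotes this fact and writes \(\H(\Lambda V',\dif)=\H_0\oplus \H_1\) for \(r=1\); you should do the same rather than attempt the factorisation.
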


In particular, using again the invariance of \(\H_0\) and \(\chi_\pi\) under quasi-isomorphism, we obtain the following generalization of \Cref{corformm}:

\begin{cor}\label{cor01}
  For a pure elliptic Sullivan algebra $(\Lambda V,\dif)$,
  \[
    \H_0(\Lambda V,\dif)=\H^\even(\Lambda V,\dif) \iff \chi_\pi(\Lambda V,\dif)\leq 1
  \]
\end{cor}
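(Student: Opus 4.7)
The proposition \ref{prop01} already provides the equivalence for any pure elliptic \emph{minimal} Sullivan algebra, so the only work remaining for \Cref{cor01} is to pass from the general case to the minimal one via quasi-isomorphism. My plan is therefore as follows.

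First, I would invoke the structural fact, recorded in \Cref{secpre}, that any Sullivan algebra decomposes as the tensor product of a minimal one and a contractible one, and that for a pure Sullivan algebra the minimal tensor factor is again pure. Thus, given a pure elliptic Sullivan algebra $(\Lambda V,\dif)$, we obtain a quasi-isomorphism $\varphi\colon(\Lambda V_{\min},\dif)\to (\Lambda V,\dif)$ with $(\Lambda V_{\min},\dif)$ pure and minimal. Ellipticity passes to $(\Lambda V_{\min},\dif)$ because ellipticity is invariant under quasi-isomorphism (finiteness of cohomology is preserved, and by \Cref{rem:pure-ellipticity} a pure Sullivan algebra with finite-dimensional cohomology is automatically elliptic).

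Next, I would observe that both quantities appearing in the statement of \Cref{cor01} are preserved by $\varphi$. The induced map on cohomology is an isomorphism $\H(\Lambda V_{\min},\dif)\xrightarrow{\cong}\H(\Lambda V,\dif)$ that preserves the upper grading, so in particular it restricts to an isomorphism on even-degree cohomology. As the paper notes, the lower-grading pieces $\H_i$ of a pure Sullivan algebra are invariant under quasi-isomorphism (again via the tensor-product decomposition), so $\varphi^*$ also restricts to an isomorphism $\H_0(\Lambda V_{\min},\dif)\xrightarrow{\cong}\H_0(\Lambda V,\dif)$. Hence the equality $\H_0=\H^\even$ holds for $(\Lambda V,\dif)$ if and only if it holds for $(\Lambda V_{\min},\dif)$. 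Similarly, $\chi_\pi$ is defined via a Sullivan model and is independent of its choice, so $\chi_\pi(\Lambda V,\dif)=\chi_\pi(\Lambda V_{\min},\dif)$.

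Finally, applying \Cref{prop01} to the pure elliptic \emph{minimal} Sullivan algebra $(\Lambda V_{\min},\dif)$ gives
\[
\H_0(\Lambda V_{\min},\dif)=\H^\even(\Lambda V_{\min},\dif)\iff\chi_\pi(\Lambda V_{\min},\dif)\leq 1,
\]
and transporting both sides of the equivalence back along $\varphi^*$ yields the desired statement for $(\Lambda V,\dif)$. The only delicate point, which is really just bookkeeping rather than a genuine obstacle, is to make sure that all invariance claims used here—existence of a pure minimal model, preservation of $\H_0$, and independence of $\chi_\pi$ from the chosen model—are already in hand from \Cref{secpre}, so that no additional computation is needed.
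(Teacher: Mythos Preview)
Your proposal is correct and follows essentially the same approach as the paper: the paper deduces the corollary from \Cref{prop01} by noting the invariance of $\H_0$ and $\chi_\pi$ under quasi-isomorphism, and you spell out exactly this reduction to a pure minimal model in slightly more detail.
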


\begin{proof}[Proof of \Cref{prop01}]
Recall that we use the notation $r:=\chi_\pi(\Lambda V',\dif)$. If $r=0$, then $(\Lambda V,\dif)$ is formal and then the result follows from \Cref{corformm}.

If $r$ is even and $\geq 2$ , then \Cref{lemmatop} tells us that $\H_r(\Lambda V',\dif)$ is non-trivial, and since $\H_r(\Lambda V',\dif)\In \H_r(\Lambda V,\dif)\In \H^\even(\Lambda V,\dif)$ it follows that $\H_0(\Lambda V,\dif)\neq \H^\even(\Lambda V,\dif)$.

For $r$ odd we need to distinguish two cases. If $k\geq 1$, we can multiply a non-trivial class from $\H_r(\Lambda V',\dif)$ with $[x_1]$ in order to obtain a non-trivial cohomology class of even lower degree (i.e.~a class whose lower degree is even), thus $\H_0(\Lambda V,\dif)\neq \H^\even(\Lambda V,\dif)$.

If $k=0$ (and $r$ odd) we have that $(\Lambda V,\dif)=(\Lambda V',\dif)$. We again distinguish two subcases. If $r=1$, then $\H(\Lambda V',\dif)=\H_0(\Lambda V',\dif)\oplus \H_1(\Lambda V',\dif)$, and $\H_1(\Lambda V',\dif)\In \H^\odd(\Lambda V',\dif)$ whence $\H^\even(\Lambda V',\dif)=\H_0(\Lambda V',\dif)$.

It remains to investigate the case $r+k=r\geq 3$. We shall use that $(\Lambda V',\dif)$ satisfies Poincar\'e duality and that lower degree is additive. As above we have that $\H_r(\Lambda V',\dif)\neq 0$, and by Lemma \ref{lemmatop} that $\H_r^{d'}(\Lambda V',\dif)=\H^{d'}(\Lambda V',\dif)$ (with $d'$ the formal dimension of $(\Lambda V',\dif)$). Taken together, these observations imply that, unless
\begin{align}\label{eqneven}
\H_{0<\ast<r}(\Lambda V',\dif)=0, 
\end{align}
there exists a non-trivial element of positive lower degree in $\H^\even(\Lambda V',\dif)$. (Indeed, an element of odd lower degree requires a Poincar\'e dual of even lower degree.)
Thus we need to show that \eqref{eqneven} cannot hold under the assumption that $k+r=r\geq 3$ (as $r$ is odd). For this we can partially quote the proof of \cite[Proposition 32.3, p.~437]{FHT01}. Indeed, we choose a homogeneous basis $(n_i)_{1\leq i\leq l}$ of $(V')^\odd$ and work by induction. By definition, it holds that $l> r$. We observe that, since $r\geq 3$, we have $\dim (V')^\odd-\dim (V')^\even\geq 3$. Consequently, as depth is restricted from above by Krull dimension and $\dim (V')^\even\leq \dim (V')^\odd-3$, neither $(\dif n_i)_{1\leq i\leq l}$, nor, more importantly, $(\dif n_i)_{1\leq i\leq l-1}$ can form a regular sequence in $\Lambda (V')^\even$. In other words, we obtain that
\begin{align*}
\H_{>0}(\Lambda (V')^\even \otimes \Lambda\langle n_1, n_2, \ldots, n_{l-1}\rangle,\dif)\neq 0
\end{align*}
Let $y$ represent such a non-trivial class of minimal (ordinary) degree. As observed (in even slightly larger generality) in the proof of \cite[Proposition 32.3, p.~437]{FHT01} we deduce that
\begin{align*}
0\neq [y] \in \H_{>0}(\Lambda (V')^\even \otimes \Lambda\langle n_1, n_2, \ldots, n_{l-1}, n_l\rangle,\dif)=\H_{>0}(\Lambda V',\dif),
\end{align*}
i.e.~$[y]$ considered as a cohomology class of the larger algebra is not trivial as well. As the arguments were omitted in \cite{FHT01} let us quickly sketch this: We consider the rational spherical fibration
\begin{align*}
(\Lambda (V')^\even \otimes \Lambda \langle n_1, n_2, \ldots, n_{l-1}\rangle,\dif)&\hto{}(\Lambda (V')^\even \otimes \Lambda\langle n_1, n_2, \ldots, n_{l-1}, n_l\rangle,\dif)
\\&\to (\Lambda \langle n_l\rangle,0)
\end{align*}
and filter the consequent Serre spectral sequence from it. On the $E_2$-term we see that, since the original algebra is two-stage, the differential on $[n_l]$ (which in the original algebra then has lower degree $0$) cannot hit an element of positive lower degree. Since $[y]$ was of minimal (ordinary) degree amongst all elements of positive lower degree, all potential preimages under any differential $d_i$ on any page $E_i$ are exclusively represented by elements of lower degree at most $1$---i.e.~they correspond either to multiples of $y_l$ by $\Lambda (V')^\even$ or to the latter itself. It follows that $[y]$ survives to $E_\infty$. (As observed in \cite{FHT01} this argument generalizes to classes $[y]$ which are only minimal within their respective lower degree.)

It remains to see that $[y]$ can be taken not to be of lower degree $r$, i.e.~that it is of strictly smaller lower degree, without loss of generality. Here, we may basically quote the proof of \cite[Proposition 32.3, p.~437]{FHT01} again in order to refine the arguments from above some more.

Indeed, we may even successively consider the sequences
\begin{align*}
\dif n_1, \ldots, \dif n_i
\end{align*}
for $1\leq i\leq l$. As observed above, since the sequence is not regular for $i=l-1$, there is indeed some $2\leq i_0\leq l-1$ such that $\dif n_1, \dif n_2, \ldots, \dif n_{i_0-1}$ is regular, but $\dif n_1, \dif n_2, \ldots, \dif n_{i_0}$ is not. As elaborated in the cited proof, we now obtain that
\begin{align*}
& \H(\Lambda (V')^\even \otimes \Lambda \langle n_1, n_2, \ldots, n_{i_0}\rangle,\dif)
\\\cong
& \H(\Lambda (V')^\even /(\dif n_1, \ldots, \dif n_{i_0-1})\otimes \Lambda \langle n_{i_0}\rangle,\bar\dif)
\\ \cong &
\Lambda (V')^\even /(\dif n_1, \ldots, \dif n_{i_0}) \oplus (K\otimes n_{i_0})
\end{align*}
where $K\In (V')^\even /(\dif n_1, \ldots, \dif n_{i_0-1})$ is the annulator of $\dif n_{i_0}$ (the latter considered as a class in the quotient $\Lambda (V')^\even /(\dif n_1, \ldots, \dif n_{i_0-1})$) and where $\bar\dif n_{i_0}$ is the class of $\dif n_{i_0}$ in the quotient. It is observed that
\begin{align*}
\H_1(\Lambda (V')^\even \otimes \Lambda \langle n_1, n_2, \ldots, n_{i_0}\rangle,\dif)&\cong K\otimes n_{i_0}
\intertext{and}
\H_{\geq 2}(\Lambda (V')^\even \otimes \Lambda \langle n_1, n_2, \ldots, n_{i_0}\rangle,\dif) &=0
\end{align*}
Since the sequence $\dif n_1, \ldots, \dif n_{i_0}$ is not regular, it follows that there is a non-trivial class $[y]$ of lower degree $1$. We actually now may replace the class $[y]$ considered above by this class $[y]$. In other words, we have seen that this class $[y]$ passes non-trivially to
\begin{align*}
0\neq [y] \in \H_{1}(\Lambda (V')^\even \otimes \Lambda\langle n_1, n_2, \ldots, n_{l-1}, n_l\rangle,\dif)=\H_{1}(\Lambda V',\dif),
\end{align*}

As a conclusion, we have found a non-trivial cohomology class of positive lower degree  smaller than $r$ (actually, it may be taken to have lower degree equal to one) whence either this class or its Poincar\'e dual (using $r$ odd) is of even lower whence even ordinary degree. Thus this respective class is not represented by $\Lambda V^\even$.
\end{proof}


\subsection{Surjectivity properties in equivariant cohomology}\label{SEC:surjectivity_cohomology}
For a $G$-space $X$ recall from \Cref{SUBSEC:genuine-equivariant-K-theory} the Borel fibration
\begin{align}\label{eq:borel-fibration}
X\hto{j} X_G:=X\times_G \E G\xto{p} \B G
\end{align}
where $X_G$ is the Borel construction defining equivariant cohomology $\H_G^*(X):=\H^*(X_G)$. (Coefficients will always be rational.) We refer to \Cref{SUBSEC:Borel-equivariant-cohomology-theories} for the definition of the Borel forgetful maps \(\H^*(j)\) and \(\H^\even(j)\). \Cref{surjectivity_of_Heven_iff_K0} begs the following question:

\begin{ques}\label{Q:surjectivity_of_j} When is $\H^\even(j)\colon \H^\even_G(X)\to \H^\even(X)$ surjective?
\end{ques}

In order to put into perspective \Cref{Q:surjectivity_of_j}, recall that $X$ is called \define{equivariantly formal} if $\H^*(j)\colon \H^*(X_G)\to \H^*(X)$ is surjective (i.e.~if the Borel fibration~\eqref{eq:borel-fibration} is totally cohomologous to zero). Equivariant formality has undergone vast research and is a rather important concept in equivariant cohomology. Obviosuly, equivariant formality implies \Cref{Q:surjectivity_of_j}. Due to this proximity of our question to big research fields it seems too much to hope for simple and short answers. This is why for the rest of this section we shall have to restrict the class of spaces $X$ which we consider. 

We will also be interested in the maps \(\H^*(p)\) and \(\H^\even(p)\), defined in the obvious similar way: as pull-backs in rational cohomology along the map $p$. More precisely we will study:

\begin{ques}\label{Q:surjectivity_of_p} When is $\H^\even(p)\colon \H^\even(\B G)\to \H^\even_G(X)$ surjective?
\end{ques}

The motivation for \Cref{Q:surjectivity_of_p} comes from the study of homogeneous spaces, as we explain in \Cref{remhom} below. This will be refined to biquotients in \Cref{SEC:applications_biquotients_cohomogeneity1}.

\begin{rem}\label{remhom}
Let $X$ be a homogeneous space $G/H$ of compact connected groups. Then both questions applied to different Borel fibrations yield the same problem, the application we are interested in. More precisely, consider the following two actions: left multiplication of $G$ on $G/H$ and left multiplication of $H$ on $G$. The corresponding Borel fibrations are
$$
G/H \hto{j} (G/H)_G \to \B G\qquad\quad \text{and} \qquad\quad
G\hto{} G/H\simeq G_H \xto{p} \B H
$$
 Note that the action of $H$ on $G$ is free and so the Borel construction $G_H$ is homotopy equivalent to $G/H$, thus $\H_H^*(G)\cong\H^*(G/H)$ (see \cite[Proposition 2.9, p.~37]{FHT01}). Moreover, $\H_G^*(G/H)$ functorially identifies with $\H^*(\B H)$. Under these identifications, $\H^\even(j)$ for the first fibration equals $\H^\even(p)$ for the second one. Indeed, a model for the Borel construction $(G/H)_G$ is given by
\begin{align*}
(\Lambda (V_{\B H} \oplus V_G \oplus V_{\B G}),\dif)
\end{align*}
using the respective minimal models. The differential $\dif$ is trivial on $V_{\B H}$ and $V_{\B G}$ and such that it is a perturbed version (reflecting the homogeneous structure of $G/H$) of the morphism induced by the identity  (up to degree shift $+1$) $V_G\to V_{\B G}$. That is, after performing an isomorphism and splitting off the contractible algebra $(\Lambda (V_G\oplus V_{\B G}),\dif)$, we obtain the minimal model of the Borel construction $(G/H)_G$ given by $(\Lambda V_{\B H},0)\cong (\H^*(\B H),0)$. The morphisms induced by $j$ and $p$ are just the inclusion of $(\Lambda V_{\B H},0)$ into the model $(\Lambda (V_{\B H}\oplus V_G),\dif)$ of $G/H$ constructed in \Cref{Sullivan-for-biquotient}.
\end{rem}

Let us investigate Questions \ref{Q:surjectivity_of_p} and \ref{Q:surjectivity_of_j}, in that order.

\smallskip
\noindent{\textbf{Convention.}} For the rest of the present section $X$ will always denote a nilpotent path-connected finite CW complex. Additional assumptions on such a space $X$ (e.g.\ rational ellipticity or pureness) will be made explicitly. As usual $G$ will denote a compact connected Lie group.

\subsubsection{The morphism $\H^\even(p)\colon \H^\even(\B G)\to \H^\even_G(X)$}

Here we restrict to almost free actions. Recall that, in contrast to a free action for which all isotropy groups are trivial, an \define{almost free action} is one for which isotropy groups are finite. For such an action we still have the rational homotopy equivalence
$$
X_G\simeq _\qq X/G, \quad \text{which implies}\quad\H^\even_G(X)\cong\H^\even(X/G).
$$
Due to standard localization results, it is known that $G$ acts almost freely on $X$ if and only if $X/G$ has finite-dimensional rational cohomology (cf.~\cite[Theorem~7.7, p.~276]{FOT08}).

In order to state the main result, which is \Cref{theomor1} below, we first need to discuss a model for $X/G$. It can be constructed from the Borel fibration out of minimal models of $X$ and $\B G$ as follows \cite[Chapter 15]{FHT01}. Let $(\Lambda V_X,\bar \dif)$ and $(\H^*(\B G),0)=(\Lambda V_{\B G},0)$ be minimal Sullivan models of $X$ and $\B G$ respectively. Then
\begin{equation}\label{EQN:model_fro_almost_free_quotients}
    (\Lambda V_X\otimes \H^*(\B G),\dif)
\end{equation}
is a Sullivan model of $X/G$, where $\dif$ is such that $\dif-\bar \dif: \Lambda V_X\to \Lambda^{\geq 1}(\B G)\otimes \Lambda V_X$ and such that $\dif$ vanishes on $\H^*(\B G)$. The model in \eqref{EQN:model_fro_almost_free_quotients} has the structure of a \define{relative model} and as such is minimal, i.e. it has the structure of a \define{minimal relative model}; however as a Sullivan model it is not minimal in general (see \cite[Section 14]{FHT01} for definitions and results). Observe that if $X$ is rationally elliptic, then this model \eqref{EQN:model_fro_almost_free_quotients} is generated over a finite-dimensional vector space as well.

\begin{theo}\label{theomor1}
Let $X$ be a rationally elliptic space with an almost free action of a compact connected Lie group $G$. If the above model \eqref{EQN:model_fro_almost_free_quotients} for $X/G$ is pure, then the following two assertions are equivalent:
\begin{enumerate}
\item $\H^\even(p)\colon\H^\even(\B G)\to \H^\even(X/G)$ is surjective.
\item $\pi_*(X)\otimes \qq=\pi_\odd(X)\otimes \qq$ and $\dim \pi_*(X)-\rank G\leq 1$.
\end{enumerate}
Without the purity assumption the implication $(2)\Rightarrow (1)$ still holds.
\end{theo}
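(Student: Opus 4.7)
The plan is to translate both (1) and (2) into statements about the Sullivan model \eqref{EQN:model_fro_almost_free_quotients} of $X/G$ and to invoke \Cref{cor01}. On models, $\H^\even(p)$ corresponds to the canonical inclusion $(\Lambda V_{\B G},0) \hookrightarrow (\Lambda V_X \otimes \Lambda V_{\B G}, \dif)$ of cochain algebras, so its image consists precisely of those cohomology classes representable by elements of $\Lambda V_{\B G}$. Using \Cref{thmmainrht}, condition (2) translates to $V_X = V_X^\odd$ together with $\dim V_X - \dim V_{\B G} \leq 1$.

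Assuming the model is pure, we have $V^\even = V_X^\even \oplus V_{\B G}$ and $V^\odd = V_X^\odd$. I would then argue the equivalence $(1) \Leftrightarrow (2)$ via two observations. First, the image of $\H^\even(p)$ is contained in the subspace $\H_0$ of cohomology classes representable by $\Lambda V^\even$, with equality iff $V_X^\even = 0$ (that is, iff $V_X = V_X^\odd$). Second, by \Cref{cor01}, $\H_0 = \H^\even(X/G)$ iff $\chi_\pi(\Lambda V_X \otimes \Lambda V_{\B G}, \dif) \leq 1$, where the latter Euler characteristic computes to $\dim V_X^\odd - \dim V_X^\even - \dim V_{\B G}$ directly from the model. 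Combining the two observations recovers exactly (2).

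For the implication $(2) \Rightarrow (1)$ without the purity hypothesis, assume $V_X = V_X^\odd$ and $\dim V_X \leq \dim V_{\B G} + 1$. The plan is to show that under these hypotheses the model \eqref{EQN:model_fro_almost_free_quotients} is quasi-isomorphic to a pure Sullivan algebra, after which the previous paragraph applies. Since $V_X$ has only odd generators and $\dif$ vanishes on $V_{\B G}$, the single obstruction to purity is the possible presence of non-$\Lambda V_{\B G}$ components in $\dif V_X^\odd$; such components necessarily involve wedge products of an even, positive number of $V_X^\odd$-generators. I would eliminate these by an inductive change-of-variables argument on a degree-filtration of $V_X^\odd$, exploiting the ellipticity of $X/G$ and the tight bound $\dim V_X - \dim V_{\B G} \leq 1$ to ensure the induction can proceed.

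The main obstacle I expect lies in carrying out this last reduction rigorously: the change-of-variables must preserve quasi-isomorphism and kill all non-pure terms, and this has to use the dimension bound together with $V_X^\even = 0$ in an essential way. Once this has been established, the surjectivity of $\H^\even(p)$ follows from the pure-case argument without further difficulty.
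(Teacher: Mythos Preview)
Your treatment of the pure case is essentially the paper's: both reduce to \Cref{cor01}. One small point you should make explicit is why ``image $=\H_0$'' forces $V_X^\even=0$. This uses that the model \eqref{EQN:model_fro_almost_free_quotients} is \emph{relatively} minimal: for $u\in V_X^\odd$ the projection of $\dif u$ to $\Lambda V_X$ is $\bar\dif u\in\Lambda^{\geq 2}V_X$, so $\dif u$ (and hence the whole ideal $(\dif V^\odd)$) has vanishing component in $V_X^\even\otimes 1\subset\Lambda V^\even$. Without this observation the implication is not obvious.

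The real divergence is in the implication $(2)\Rightarrow(1)$ without purity. The paper does \emph{not} try to show that \eqref{EQN:model_fro_almost_free_quotients} is quasi-isomorphic to a pure algebra. Instead it passes to the \emph{associated pure} algebra $(\Lambda V_X\otimes\H^*(\B G),\dif_\sigma)$ and uses the odd spectral sequence: since $E_0=(\Lambda V,\dif_\sigma)$ converges to $\H(\Lambda V,\dif)$ and $\H^*(\B G)$ lands in the lowest filtration, surjectivity for $\dif_\sigma$ implies surjectivity for $\dif$ (this is \Cref{lemmared}). Because passing to $\dif_\sigma$ preserves $V$, the hypotheses $V_X=V_X^\odd$ and $\dim V_X-\rank G\leq 1$ persist verbatim, and the pure case applies. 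This is short and uses only standard machinery.

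Your proposed route---an inductive change of variables eliminating the $\Lambda^{\geq 2}V_X^\odd\otimes\Lambda V_{\B G}$ components of $\dif$---is a genuine gap as it stands. To kill such a term you would need it to be $\dif$-exact in an appropriate sense, and you give no mechanism for this; nor is it clear how the bound $\dim V_X-\dim V_{\B G}\leq 1$ enters the induction. Even granting that $V_X=V_X^\odd$ forces $\bar\dif=0$ (which is true but itself requires an argument for elliptic algebras), there can still be non-pure contributions of the form $u_iu_j\cdot g$ with $g\in\Lambda^{\geq 1}V_{\B G}$, and it is not evident these can be absorbed by an automorphism. The odd-spectral-sequence reduction sidesteps all of this; I would recommend replacing your third paragraph with that argument.
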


\begin{rem}\label{REM:condition_euler_homotopy}
If $X/G$ above is nilpotent (and hence rationally elliptic) and satisfies that its rational homotopy is concentrated in odd degrees, then Assertion~(2) is equivalent to $\chi_\pi(X/G)\leq 1$. To show it, observe that in the model \eqref{EQN:model_fro_almost_free_quotients} the evenly graded part is generated by $\pi_*(\B G)\otimes \qq$ and the odd one by $\pi_*(X)\otimes \qq$. The difference of dimensions of homotopy groups (recall $\rank G=\dim \pi_* (G)$) equals the homotopy Euler characteristic even if the model of the Borel fibration is not minimal as a Sullivan model (as it usually will not be), since Euler characteristics are not affected by taking differentials.
\end{rem}

\begin{rem}\label{remhomapp}
  We can apply \Cref{theomor1} in the context of biquotients. For a given biquotient $\biq{G}{H}$
  of compact connected Lie groups, consider the free action of $H$ on $X:=G$. Recall from \Cref{biq_pure} that biquotiens are pure and rationally elliptic, thus \Cref{theomor1} together with \Cref{REM:condition_euler_homotopy} imply that the morphism $\H^\even(p)\colon \H^\even(\B H)\to\H^\even(\biq{G}{H})$ is surjective if and only if $\rank G-\rank H \leq 1$. This result is stated explictely in \Cref{THM:surjectivity-of-Kalpha-biquotients} below, and the proof of it shall be reviewed in \Cref{SEC:applications_biquotients_cohomogeneity1}.
\end{rem}


Let us discuss the proof of \Cref{theomor1}. For it we use the model in \eqref{EQN:model_fro_almost_free_quotients} for $X/G$. Under the isomorphism $\H^*(X/G)\cong\H(\Lambda V_X\otimes \H^*(\B G),\dif)$, the map
$$\H^\even(p)\colon \H^\even(\B G)\to \H(\Lambda V_X\otimes \H^*(\B G),\dif)$$
is the one induced by the inclusion morphism between models
$$
(\H^*(\B G),0)\hto{} (\Lambda V_X \otimes \H^*(\B G),\dif)
$$
onto the second factor. In particular, the image of this inclusion is a differential subalgebra of the product.
A first step will be to ``reduce to the pure case'', since in such case the image of $\H^\even(p)$ coincides with $\H_0(\Lambda V_X\otimes \H^*(\B G),\dif)$ and we can use the tools from \Cref{secpre}.

Recall that any Sullivan algebra $(\Lambda V,\dif)$ has an \define{associated pure Sullivan algebra} $(\Lambda V,\dif_\sigma)$, see \cite[p.~438]{FHT01} for the construction of $\dif_\sigma$. In the case of our model in \eqref{EQN:model_fro_almost_free_quotients}, since the differential $\dif$ vanishes on $\H^*(\B G)$, the associated pure model $(\Lambda V_X\otimes \H^*(\B G),\dif_\sigma)$ has the structure of a relative model over $\H^*(\B G)$ with fibre $(\Lambda V_X,\dif_\sigma)$ yielding the inclusion $\H^*(\B G)\hto{} \H^*(\Lambda V_X \otimes \H^*(\B G),\dif_\sigma)$. The key observation is:

\begin{lemma}\label{lemmared}
If the morphism $\H^\even(\B G)\hto{} \H^\even(\Lambda V_X \otimes \H^*(\B G),\dif_\sigma)$ is surjective, then so is $\H^\even(\B G)\hto{} \H^\even(\Lambda V_X \otimes \H^*(\B G),\dif)$.
\end{lemma}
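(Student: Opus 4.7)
My plan is to exploit a decreasing filtration $F^p A := \Lambda V^{\even} \cdot \Lambda^{\geq p} V^{\odd}$ on $A := \Lambda V_X \otimes \H^*(\B G)$, where $V^{\even} = V_X^{\even} \oplus V_{\B G}$ and $V^{\odd} = V_X^{\odd}$. By rational ellipticity of $X$, the space $V_X^{\odd}$ is finite-dimensional, so this filtration is bounded with $F^{N+1} = 0$ for $N := \dim V_X^{\odd}$. A direct computation on generators shows that $\dif(F^p) \subseteq F^{p-1}$ and that the induced ``leading order'' map $F^p \to F^{p-1}/F^p$ is precisely $\dif_\sigma$. Moreover, the subalgebra $\H^*(\B G) \subseteq A$ lies inside $\Lambda V^{\even}$.

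I would proceed by induction on $n \geq 0$, proving at each step that every class in $\H^{\even}(A,\dif)$ can be written as the sum of a class in the image of $\H^{\even}(\B G) \to \H^{\even}(A,\dif)$ and a class with a representative in $F^n A$. The base case $n = 0$ is trivial, and taking $n = N+1$ will yield the lemma since $F^{N+1} A = 0$. For the inductive step, pick a representative $z \in F^n$ of the residual class and consider its leading component $z_n \in \Lambda V^{\even} \otimes \Lambda^n V^{\odd}$. Extracting the $F^{n-1}/F^n$ part of the equation $\dif z = 0$ forces $\dif_\sigma z_n = 0$, so $z_n$ represents a class in $\H^{\even}(A,\dif_\sigma)$. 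By the surjectivity hypothesis, one finds $\xi \in \H^{\even}(\B G)$ with $[\xi]_\sigma = [z_n]_\sigma$, and hence $\xi - z_n = \dif_\sigma u$ in $A$ for some $u \in A$.

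The crucial point, which I expect to be the main subtlety, is the component-by-component analysis of the equation $\xi - z_n = \dif_\sigma u$ with respect to the filtration. Writing $u = \sum_q u_q$ with $u_q \in \Lambda V^{\even} \otimes \Lambda^q V^{\odd}$, the operator $\dif_\sigma$ sends $u_q$ into $\Lambda V^{\even} \otimes \Lambda^{q-1} V^{\odd}$. Matching filtration pieces on both sides, in the case $n = 0$ this reduces to $\xi - z_0 = \dif_\sigma u_1$ for some $u_1 \in \Lambda V^{\even} \otimes V^{\odd}$, and setting $z' := z - \xi + \dif u_1$ yields a representative with $(z')_0 = 0$, hence $z' \in F^1$, and $[z'] = [z] - \xi$ in $\H^{\even}(A,\dif)$; this advances the induction by absorbing $\xi$ into the $\H^{\even}(\B G)$-contribution. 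In the case $n \geq 1$, the same matching yields two independent equations $\xi = \dif_\sigma u_1$ and $z_n = -\dif_\sigma u_{n+1}$; in particular $z_n$ is automatically $\dif_\sigma$-exact, and setting $z' := z + \dif u_{n+1}$ yields a representative in $F^{n+1}$ with $[z'] = [z]$, advancing the induction without modifying the $\H^{\even}(\B G)$-contribution at all. This filtration rigidity, which forces $[z_n]_\sigma$ to vanish automatically whenever $n \geq 1$, is what makes the induction close, and iterating until $n = N+1$ concludes the proof.
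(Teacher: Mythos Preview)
Your argument is correct and is essentially the same approach as the paper's proof.  The paper simply invokes the \emph{odd spectral sequence} (see \cite[p.~438]{FHT01}), whose $E_0$-page is $(\Lambda V_X\otimes\H^*(\B G),\dif_\sigma)$ and which converges to $\H(\Lambda V_X\otimes\H^*(\B G),\dif)$; your filtration $F^p = \Lambda V^{\even}\cdot\Lambda^{\geq p}V^{\odd}$ is precisely the one giving rise to this spectral sequence, and your inductive peeling-off of the leading term $z_n$ is an explicit unwinding of the convergence statement.  The ``filtration rigidity'' you observe for $n\geq 1$ amounts to the fact that the surjectivity hypothesis forces $\H_p(\Lambda V,\dif_\sigma)=0$ for all even $p\geq 2$, so that the spectral sequence is concentrated in the bottom row for even total degree.
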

\begin{proof}
The odd spectral sequence (see \cite[p.~438]{FHT01}) satisfies $(E_0,\dif_0)=(\Lambda V_X\otimes \H^*(\B G),\dif_\sigma)$ and converges to $\H(\Lambda V_X \otimes \H^*(\B G),\dif)$. Hence any cohomology class of $\H^\even(\Lambda V_X \otimes \H^*(\B G),\dif)$ is represented by one from the associated pure model. This implies the lemma.

\end{proof}

We may reformulate Lemma \ref{lemmared} or at least its proof additionally using Proposition \ref{prop01} as

\begin{cor}\label{COR:spherical_cohomology}
Let $X$ be a rationally elliptic space with $\chi_\pi(X)\leq 1$ such that all even-degree rational homotopy groups define spherical cohomology. Then $\H^\even(X)$ is generated by spherical cohomology classes of even degree.
\end{cor}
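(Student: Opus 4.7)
The strategy is to reduce to the pure case via the associated pure Sullivan algebra, apply \Cref{cor01} there, and then transfer the conclusion back using the odd spectral sequence, much as in the proof of \Cref{lemmared}.

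First, I would exploit the sphericity hypothesis to fix a minimal Sullivan model \((\Lambda V,\dif)\) of \(X\) in which every element of \(V^\even\) is a \(\dif\)-cocycle. By \Cref{thmmainrht} the space \(V^\even\) is dual to \(\pi_\even(X)\otimes\qq\), and the assumption that every even-degree homotopy class produces a spherical cohomology class allows one to realise a basis of \(V^\even\) simultaneously by cocycles after a minimality-preserving automorphism of the model. I would then introduce the associated pure Sullivan algebra \((\Lambda V,\dif_\sigma)\); since \(\dif\) already vanishes on \(V^\even\), this algebra is pure by construction, still elliptic, and it shares with \((\Lambda V,\dif)\) the homotopy Euler characteristic \(\chi_\pi=\dim V^\odd-\dim V^\even\le 1\).

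Applying \Cref{cor01} to \((\Lambda V,\dif_\sigma)\) will then yield the identity \(\H^\even(\Lambda V,\dif_\sigma)=\H_0(\Lambda V,\dif_\sigma)\). By definition \(\H_0\) is represented by cocycles in \(\Lambda V^\even\), so it is generated as an algebra by the classes \([v]_{\dif_\sigma}\) with \(v\in V^\even\). To pull this back to \((\Lambda V,\dif)\), I would run the odd spectral sequence \(E_0=(\Lambda V,\dif_\sigma)\Rightarrow\H(\Lambda V,\dif)\): the preceding identity forces \(E_\infty^{p,q}\) to vanish whenever \(p\ge 1\) and \(p+q\) is even, and ellipticity makes the filtration by lower degree bounded, so every even-degree class of \(\H(\Lambda V,\dif)\) is detected by a representative in \(\Lambda V^\even\). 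Because each element of \(V^\even\) is already a \(\dif\)-cocycle by our choice of model, this expresses every class in \(\H^\even(X)\) as a polynomial in the spherical classes \([v]_\dif\), as claimed.

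The most delicate step is likely the very first one, namely the simultaneous normalisation of the minimal model so that \emph{all} generators in \(V^\even\) can be taken to be \(\dif\)-cocycles; the formulation of the sphericity hypothesis a priori only gives this one even-degree generator at a time, in possibly distinct isomorphic models. Once this preliminary bookkeeping is settled, the remainder of the argument is just the combination of the spectral sequence reasoning already used for \Cref{lemmared} with the structural information on pure elliptic algebras provided by \Cref{cor01}.
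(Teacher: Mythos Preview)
Your proposal is correct and follows essentially the same route as the paper: pass to the associated pure model, invoke \Cref{cor01} (the paper cites the equivalent \Cref{prop01}) to get $\H^\even=\H_0$ there, and then use the odd spectral sequence together with the closedness of $V^\even$ to transfer the conclusion back to $(\Lambda V,\dif)$. The only difference is one of emphasis: you explicitly flag the simultaneous normalisation of the model so that all of $V^\even$ consists of $\dif$-cocycles as the delicate preliminary step, whereas the paper simply asserts ``by the assumption on spherical cohomology, we derive that $V^\even$ is closed in $(\Lambda V,\dif)$'' without further comment.
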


\begin{proof}
The arguments are basically the same as before: Let $(\Lambda V,\dif)$ be a minimal model of $X$. The odd spectral sequence converges from the associated pure model $(\Lambda V,\dif_\sigma)$ to $\H(\Lambda V,\dif)$. The associated pure model is elliptic if and only if so is $(\Lambda V,\dif)$. If $(\Lambda V,\dif)$ is minimal, so is $(\Lambda V, \dif_\sigma)$, and respective homotopy Euler characteristics agree. Thus, by Proposition \ref{prop01} we get $\H(\Lambda V,\dif_\sigma)=\H_0(\Lambda V,\dif_\sigma)$ and hence is generated by $\Lambda V^\even$. By the assumption on spherical cohomology, we derive that $V^\even$ is closed in $(\Lambda V,\dif)$, and $\Lambda V^\even$ surjects onto all elements of even total degree in each sheet of the odd spectral sequence, and hence, due to convergence, the map $(\Lambda V^\even,0) \to \H^\even(\Lambda V,\dif)$ induced by the identity is surjective.
\end{proof}
Note that, without the assumption on spherical cohomology in \Cref{COR:spherical_cohomology}, even degree homotopy groups need not define cohomology classes and they only represent cohomology classes of $\H^\even(\Lambda V,\dif)$ in the sense of the spectral sequence; i.e.~on the level of the model they would need to be perturbed.

\smallskip

We now combine the obtained insight to prove \Cref{theomor1}.

\begin{proof}[Proof of \Cref{theomor1}]

Let $X$ be as in the statement. This proof consists of two parts. In the first part we show that, in order to prove the implication $(2)\Rightarrow (1)$, we can assume that $X/G$ is pure. For this, consider the minimal relative model $(\Lambda V_X\otimes \H^*(\B G),\dif)$ of $X/G$ as constructed in \eqref{EQN:model_fro_almost_free_quotients}. We want to show that $\H^\even(\B G)\to \H^\even(X/G)$ is surjective, i.e.~that $\H^\even(\B G)\to \H^\even(\Lambda V_X\otimes \H^*(\B G),\dif)$ is surjective. In view of \Cref{lemmared} it suffices to show that $\H^\even(\B G)\to \H^\even(\Lambda V_X\otimes \H^*(\B G),\dif_\sigma)$ is surjective.

Now notice that $(\Lambda V_X\otimes \H^*(\B G),\dif_\sigma)$ is the same as forming the relative minimal model over $\H^*(\B G)$ with respect to $(\Lambda V_X,\dif_\sigma)$, i.e.~in $(\Lambda V_X\otimes \H^*(\B G),\dif)$ first projecting to $(\Lambda V_X,\dif_\sigma)$ and then projecting to $(\Lambda V_X\otimes \H^*(\B G),\dif_\sigma)$. Since the model was chosen relatively minimal, $(\Lambda V_X,\bar \dif)$ is minimal, and so is $(\Lambda V_X,\dif_\sigma)$. The assumptions
$$
    \pi_*(X)\otimes \qq=\pi_\odd(X)\otimes \qq \quad \text{and}\quad \dim \pi_*(X)-\rank G\leq 1
$$
translate to
$$
\qquad V_X=V_X^\odd \quad \text{and}\quad\dim V_X-\dim \pi_*(\B G)\leq 1
$$
respectively for both $(\Lambda V_X,\dif)$ and $(\Lambda V_X,\dif_\sigma)$. Note that the number of algebra generators of the free polynomial algebra $\H^*(\B G)$ equals $\dim \pi_*(\B G)$.
Moreover, by \cite[Proposition~32.4, p.~438]{FHT01} we preserve finite-dimensional cohomology, i.e.~$\operatorname{fdim}  \H(\Lambda V_X\otimes \H^*(\B G),\dif_\sigma)=\operatorname{fdim} \H(\Lambda V_X\otimes \H^*(\B G),\dif)<\infty$, where $\operatorname{fdim}$ denotes formal dimension. Hence we reduced to the case that the model in \eqref{EQN:model_fro_almost_free_quotients} of $X/G$ is pure whilst preserving the assumption on the homotopy groups of $X$.

\smallskip

In the second part of the proof we show the equivalence $(1)\Leftrightarrow (2)$ in the pure case. This will follow from Proposition \ref{prop01}. That is, we assume that $(\Lambda V\otimes \H^*(\B G),\dif)$ is pure elliptic and minimal (by possibly replacing it by a minimal model). Note that passing to a minimal model has no effect on the homotopy Euler characteristic as noted below \Cref{theomor1}.

If $X$ has even-degree rational homotopy, then, by construction (using that $\H^*(\B G)$ is concentrated in even degrees), this passes on to non-trivial even degree rational homotopy other than $\pi_*(\B G)\otimes \qq$ in $(\Lambda V_X\otimes \H^*(\B G),\dif)$. Consequently, the map from $\H^*(\B G)$ will not be surjective. This shows that $(1)$ implies the first assertion in $(2)$.

We are now in the situation that $(\Lambda V_X\otimes \H^*(\B G),\dif)$ is minimal pure and $V_X=V_X^\odd$.
Observe that the surjectivity of $\H^\even(p)$ is equivalent to $\chi_\pi(\Lambda V_X\otimes \H^*(\B G),\dif)\leq 1$ by \Cref{prop01}, since the image of such map equals precisely $\H_0(\Lambda V_X\otimes \H^*(\B G),\dif)$. Parallely, the first assertion in $(2)$ (i.e. $V_X=V_X^\odd$) implies
$$
\chi_\pi(\Lambda V_X\otimes \H^*(\B G),\dif)= \dim \pi_*(X)-\rank G,
$$
using that  $\pi_*(\H^*(\B G))=\pi_*(\B G)=\pi_\even(\B G)$ has dimension $\rk G$. Altogether this proves the equivalence $(1)\Leftrightarrow (2)$.
\end{proof}

\begin{rem}
We remark further that the proof of \Cref{theomor1} shows a little more: If $X/G$ does possess some pure model, then its minimal model will be pure as well, and we can decompose it as a rational fibration over the image of $\H^*(\B G)$ in $\H^*(X/G)$; observe that this image is a differential subalgebra which we denote by $(H,0)\In \H(\Lambda V\otimes \H^*(\B G),\dif)$. The fibre then is again a model of $X$, and we can proceed with the line of arguments leading to the proof as given, with the role of $(\H^*(\B G),0)$ taken by $(H,0)$.
\end{rem}

\subsubsection{The morphism $\H^\even(j)\colon \H^\even_G(X)\to \H^\even(X)$}\label{pagesubsubsec}

Recall that the surjectivity of this map is crucial for our applications in K-theory due to  \Cref{surjectivity_of_Heven_iff_K0}.

\begin{rem}\label{remhomapp2}
We have already implicitly shown that $\H^\even(j)$ is surjective for the $G$-action on a homogeneous space $X=G/H$ of compact connected groups if and only if $\chi_\pi(G/H)\leq 1$. This follows from \Cref{remhom}, \Cref{theomor1} and \Cref{remhomapp}. It shall be restated in \Cref{THM:half_equiv_formal_homogeneous} below.
\end{rem}

In \Cref{seccohom} it will be our goal to characterize the surjectivity of $\H^\even(j)$ for cohomogeneity one actions. In the present section we derive the following further general situations in which surjectivity holds.

\begin{prop}\label{PROP:sufficient_conditions_surjectivity_half_equivariant}
Let $X$ be a pure (and hence rationally elliptic) space with $\chi_\pi(X)\leq 1$ with an action of a compact connected Lie group $G$. Suppose that one of the following holds:
\begin{enumerate}
    \item $\deg \pi_\even(X)\otimes \qq< \deg \pi_\odd(X)\otimes \qq$ (i.e.~if all even degree rational homotopy groups have lower degrees than the lowest of odd degree),
    \item $X_G$ is pure (whilst $\H^*(X_G)$ not necessarily finite dimensional).
\end{enumerate}
Then $\H^\even(j)\colon \H^\even_G(X)\to \H^\even(X)$ is surjective.
\end{prop}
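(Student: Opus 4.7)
The strategy is to exploit that, under the hypothesis $\chi_\pi(X)\leq 1$ together with pureness, \Cref{cor01} gives $\H^\even(X;\qq) = \H_0(\Lambda V_X, \bar\dif)$. Hence $\H^\even(X;\qq)$ is generated as a $\qq$-algebra by the cohomology classes of the even-degree generators $v\in V_X^\even$ of a minimal pure Sullivan model $(\Lambda V_X,\bar\dif)$ of $X$, so it suffices to lift each class $[v]$ through $\H^\even(j)$. I will treat the two cases by using two different Sullivan models of the Borel construction $X_G$.

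In case (1), I would use the standard relative Sullivan model $(\H^*(\B G)\otimes \Lambda V_X, D)$ of the Borel fibration, where $D|_{\H^*(\B G)}=0$ and $Dv-\bar\dif v\in \H^{>0}(\B G)\otimes \Lambda V_X$ has total degree $\deg v+1$. For $v\in V_X^\even$ pureness of $X$ gives $\bar\dif v=0$, so $Dv$ reduces to the perturbation, which lies in
\[
\bigoplus_{s\geq 1}\H^{2s}(\B G)\otimes(\Lambda V_X)^{\deg v+1-2s}.
\]
Since $\deg v+1-2s$ is odd, any contributing element of $\Lambda V_X$ must involve at least one generator from $V_X^\odd$, hence have degree at least $\min\deg V_X^\odd$. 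Hypothesis (1) gives $\min\deg V_X^\odd>\max\deg V_X^\even\geq \deg v$, so $\deg v+1-2s<\min\deg V_X^\odd$ for all $s\geq 1$, and the perturbation vanishes. Thus $v$ is already $D$-closed and represents a class in $\H^{\deg v}(X_G;\qq)$ mapping to $[v]$ under $\H^*(j)$.

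In case (2), I would use the minimal Sullivan model $(\Lambda W,\dif)$ of $X_G$, which is pure by assumption, together with a Sullivan representative $\phi\colon (\Lambda W,\dif)\to(\Lambda V_X,\bar\dif)$ of $j$. By \Cref{thmmainrht}, for $k\geq 1$ we have $V_X^{2k}\cong \Hom(\pi_{2k}(X),\qq)$ and $W^{2k}\cong \Hom(\pi_{2k}(X_G),\qq)$, and the linear part of $\phi$ is dual to $\pi_*(j)$. The long exact sequence of rational homotopy groups of the Borel fibration, combined with $\pi_\odd(\B G)\otimes\qq=0$, yields an injection $\pi_{2k}(X)\otimes\qq\hookrightarrow \pi_{2k}(X_G)\otimes\qq$ for $k\geq 1$, so dually the linear part $W^{2k}\twoheadrightarrow V_X^{2k}$ is surjective. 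I then induct on the degree: for $v\in V_X^{2k}$, pick $w\in W^{2k}$ with linear image $v$; pureness of $(\Lambda W,\dif)$ gives $\dif w=0$, so $w$ represents a class in $\H^{2k}(X_G;\qq)$ and $[\phi(w)]=[v]+[D]$ in $\H^{2k}(X;\qq)$, where $D\in \Lambda^{\geq 2}V_X$ has total degree $2k$ and all generator factors of degree strictly less than $2k$. Splitting $D=D_0+\sum_{i\geq 1}D_{2i}$ by the number of odd generators in each monomial, each $D_{2i}$ is itself a $\bar\dif$-cocycle (since $\bar\dif$ strictly decreases the lower degree on a pure algebra), represents a class in $\H_{2i}(\Lambda V_X,\bar\dif)$, and vanishes cohomologically by \Cref{cor01}. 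Hence $[D]=[D_0]$ is represented by a polynomial in even generators of lower degree, which lies in the image of $\H^\even(j)$ by the inductive hypothesis. Therefore $[v]=[\phi(w)]-[D]$ does too; the base case (smallest $\deg v$ in $V_X^\even$) reduces to $[D]=0$ by the same vanishing, with no lower even generators available.

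The main obstacle is the decomposable analysis in case (2), where one has to show that the $V_X^\odd$-contributions to $\phi(w)$ are cohomologically trivial; this is the precise place where $\chi_\pi(X)\leq 1$ is used, through the reformulation $\H_{2i}(\Lambda V_X,\bar\dif)=0$ for $i\geq 1$ provided by \Cref{cor01}. Case (1) is, by contrast, essentially a degree bookkeeping argument on the standard Sullivan model.
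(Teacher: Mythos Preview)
Your proof is correct and follows the same overall strategy as the paper: reduce via \Cref{cor01} to showing that the classes $[v]$ for $v\in V_X^\even$ lie in the image of $\H^\even(j)$, then verify this under each hypothesis. Your case~(1) argument coincides with the paper's (a degree count in the relative model, which the paper phrases as degeneration of the Leray--Serre spectral sequence on even-degree spherical classes). For case~(2) the paper argues in one line via the spectral sequence, using pureness of $X_G$ to ensure that the even-degree spherical classes of $X$ survive; your argument unpacks this by working directly in the minimal model of $X_G$, lifting $v$ to a closed $w\in W^\even$, and then disposing of the decomposable error term by induction on degree together with the vanishing $\H_{2i}(\Lambda V_X,\bar\dif)=0$ for $i\geq 1$. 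This is a bit more laborious but entirely valid, and it makes explicit where $\chi_\pi(X)\leq 1$ enters a second time (to kill the odd-generator contributions to $\phi(w)$), a point the paper's spectral sequence formulation absorbs implicitly.
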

\begin{proof}
Note that either of the assumptions (1) or (2) implies that the Leray--Serre spectral sequence of the Borel fibration $X\to X_G\to \B G$ degenerates on all spherical cohomology classes of even-degree. Under Assumption (1), this follows, since $\H^*(\B G)$ is concentrated in even degrees. Under (2), it follows from the fact that the even-degree rational homotopy groups of $X_G$ are the ones of $X$ plus the ones of $\B G$.

Thus we have that every spherical cohomology class of even-degree lies in the image of $\H^\even(j)\colon \H^\even_G(X)\to \H^\even(X)$. Since $X$ is pure by assumption, the set of spherical cohomology classes of even-degree generates exactly $\H_0(X)$ (see \Cref{secpre}). Finally, \Cref{prop01} says that $\H_0(X)=\H^\even(X)$ if $\chi_\pi(X)\leq 1$, and this completes the proof.
\end{proof}
\begin{example}
Any sphere, any complex or quaternionic projective space $X$, and, much more generally, any complex or quaternionic generalized Grassmanian  of the form $U(n)/(U(n_1)\times \ldots \times U(n_k))$ respectively $Sp(n)/(Sp(n_1)\times \ldots \times Sp(n_k))$ with $n-1\leq \sum_{1\leq i\leq k} n_i\leq n$, is pure by \Cref{biq_pure} and satisfies both $\deg \pi_\even(X)\otimes \qq< \deg \pi_\odd(X)\otimes \qq$ (using the fact that the used standard blockwise inclusions $U(n_i)\to U(n)$ respectively $Sp(n_i)\to Sp(n)$ are highly connected) and $\chi_\pi(X)\leq 1$ by \eqref{EQ:homotopy_Euler_biquotient}.  Thus \Cref{PROP:sufficient_conditions_surjectivity_half_equivariant} applies to any $G$-action on such a space. In contrast, any odd dimensional sphere admits actions which are not equivariantly formal. The easiest example of this sort is the free standard $S^1$ action with quotient a complex projective space. 
\end{example}

We finish by commenting on the assumption $\chi_\pi(X)\leq 1$. On the one hand, it is easy to state examples lacking surjectivity if $\chi_\pi(X)\geq 2$: Consider for example left multiplication of a Lie group $G$ on itself $X:=G$. In this case the Borel fibration equals the universal principal $G$-bundle $G\hto{} \E G \to \B G$, and we have that $\H_G^\even(G)=\H^*(\E G)=\qq$. If $\chi_\pi(G)=\rk G\geq 2$, then $\dim_\qq\H^\even(G)\geq 2$ and hence $\H_G^\even(G)$ cannot surject onto $\H^\even(G)$.
On the other hand, $\chi_\pi(X)\leq 1$ is not a condition which is necessary in general, consider for example the trivial $G$-action on any space $X$.

\subsection{Applications to homogeneous spaces, cohomogeneity one manifolds and biquotients}\label{SEC:applications_biquotients_cohomogeneity1}

We now provide applications of the established result
to the specified spaces. Note that as is known/as we shall illustrate all these spaces under consideration admit elliptic pure models and hence are very well accessible to our theorems. Coefficients in this section will always be rational.

\subsubsection{Homogeneous spaces and biquotients}\label{subsecbiq}

Here we collect and review the results of the previous sections for homogeneous spaces and more generally biquotients. We hope this section illustrates the discussion above and provides a service to the reader interested in such spaces.

Recall that related to a homogeneous pace $G/H$ there are two natural actions: the transitive action of $G$ on $G/H$ and the free action of $H$ on $G$. The corresponding Borel fibrations are
$$
G/H \hto{j} \B H \simeq (G/H)_G \to \B G\qquad\quad \text{and} \qquad\quad
G\hto{} G/H\simeq G_H \xto{p} \B H
$$
In \Cref{remhom} we have seen that the morphism $\H^\even(j)$ for the first fibration identifies with $\H^\even(p)$ for the second one. As we already discussed in \Cref{remhomapp2} we have:

\begin{theo}\label{THM:half_equiv_formal_homogeneous}
Let $G/H$ be a homogeneous space with $G,H$ connected. The morphism $\H^\even(j)\colon \H_G^\even(G/H)\to\H^\even(G/H)$ is surjective if and only if $\rank G-\rank H \leq 1$.
\end{theo}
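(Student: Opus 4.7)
The plan is to reduce the statement to the already-established \Cref{theomor1} via the identification of two natural Borel fibrations associated to $G/H$ explained in \Cref{remhom}.

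First, I would invoke \Cref{remhom}: the homogeneous space $G/H$ carries both the transitive $G$-action and the free $H$-action on $G$ itself, yielding the two Borel fibrations
\[
G/H \xhookrightarrow{j} (G/H)_G \to \B G \qquad\text{and}\qquad G \hookrightarrow G_H \simeq G/H \xrightarrow{p} \B H.
\]
Under the canonical rational identifications $(G/H)_G \simeq_\qq \B H$ and $G_H \simeq_\qq G/H$ (the latter because $H$ acts freely), the map $\H^\even(j)$ for the first fibration is identified with $\H^\even(p)$ for the second. Hence the statement reduces to asking when $\H^\even(p)\colon \H^\even(\B H)\to \H^\even(G/H)$ is surjective.

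Next I would apply \Cref{theomor1} to the rationally elliptic space $X := G$ with the free $H$-action. The hypotheses of the theorem are all satisfied: freeness is a special case of almost freeness, and the relative model $(\Lambda V_G \otimes \H^*(\B H), \dif)$ of $G/H$ from \eqref{EQN:model_fro_almost_free_quotients} is (up to reordering of tensor factors) precisely the Sullivan model of the biquotient $\biq{G}{H} = G/H$ constructed in \Cref{Sullivan-for-biquotient}, which is pure by \Cref{biq_pure}. Condition~(2) of \Cref{theomor1} then reads
\[
\pi_*(G)\otimes\qq = \pi_\odd(G)\otimes\qq \qquad\text{and}\qquad \dim \pi_*(G)\otimes\qq - \rank H \leq 1.
\]
The first equation holds automatically for every compact connected Lie group, since $\H^*(G;\qq)$ is a free graded-commutative algebra on odd-degree generators and hence $V_G = V_G^\odd$. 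The second equation becomes $\rank G - \rank H \leq 1$ using the classical identity $\dim \pi_*(G)\otimes\qq = \rank G$. This yields both implications of the theorem.

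I do not anticipate any serious obstacle: the entire argument is bookkeeping that threads together \Cref{remhom}, \Cref{theomor1}, \Cref{Sullivan-for-biquotient} and \Cref{biq_pure}. The one point worth spelling out carefully is the compatibility of the identifications: one must verify that the map $\H^*(\B H) \to \H^*(G/H)$ induced by $p$ (modelled as the inclusion of $\H^*(\B H)$ into the biquotient model of \Cref{Sullivan-for-biquotient}) corresponds under the quasi-isomorphisms of \Cref{remhom} to the fibre-inclusion map $\H^*(j)$ of the first Borel fibration. Once this is checked, the equivalence follows immediately from \Cref{theomor1}.
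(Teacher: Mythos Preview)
Your proposal is correct and follows essentially the same route as the paper: the paper's proof (via \Cref{remhomapp2}) is precisely the combination of \Cref{remhom}, \Cref{theomor1} and \Cref{remhomapp}, which is exactly what you thread together. Your explicit verification that condition~(2) of \Cref{theomor1} reduces to $\rank G - \rank H \leq 1$ is the content of \Cref{remhomapp} specialized to homogeneous spaces.
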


This theorem is crucial in the proofs of Theorems \ref{THM:surjectivity of homogeneous bundles} and \ref{THM:double_stabilization_homogeneous}, which are given at the end of this section.

To put some context, we recall the well-known fact that $\H^*(j)\colon \H_G^*(G/H)\to\H^*(G/H)$ is surjective in all degrees (i.e. $G/H$ is equivariantly formal) if and only if $\rank G=\rank H$ (see e.g.\ below Proposition 2.9 in \cite{GM14:coho1}).

In the more general case of biquotients $\biq{G}{H}$ (we refer to \Cref{secpre} for a brief introduction), note that for we no longer have a natural action of $G$ on $\biq{G}{H}$ in general. Instead we consider the free action of $H$ on $G$, yielding the borel fibration $G\hto{} \biq{G}{H}\simeq G_H \xto{p} \B H$. As we discussed in \Cref{remhomapp}:

\begin{theo}\label{THM:surjectivity-of-Kalpha-biquotients}
Let $\biq{G}{H}$ be a biquotient of compact connected Lie groups. The morphism $\H^\even(p)\colon \H^\even(\B H)\to\H^\even(\biq{G}{H})$ is surjective if and only if $\rank G-\rank H \leq 1$.
\end{theo}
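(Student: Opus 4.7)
The plan is to deduce this statement as a direct application of Theorem~\ref{theomor1}, with the role of the space ``$X$'' therein played by the Lie group $G$ and the role of the acting group played by $H$. The relevant geometric input is the free (in particular, almost free) left action of $H$ on $G$, whose orbit space is, by definition, the biquotient $\biq{G}{H}$. The associated Borel fibration
\[
  G \hookrightarrow G_H \simeq \biq{G}{H} \xrightarrow{p} \B H
\]
uses the standard rational homotopy equivalence $G_H \simeq \biq{G}{H}$ that holds for any free action.

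First I would check the hypotheses of Theorem~\ref{theomor1}. The space $G$ is a compact connected Lie group, hence an H-space with finite-dimensional cohomology, and is rationally elliptic with minimal Sullivan model $(\Lambda V_G,0)$ satisfying $V_G = V_G^\odd$ and $\dim V_G = \rank G$ (by Hopf's theorem as recalled in \Cref{SEC:preliminaries_lie_biquotients}). The relative minimal model for $\biq{G}{H}$ of the form~\eqref{EQN:model_fro_almost_free_quotients}, namely $(\Lambda V_G \otimes \H^*(\B H),\dif)$, coincides (up to the obvious reindexing of factors) with the explicit biquotient model of \Cref{Sullivan-for-biquotient}, and is pure by \Cref{biq_pure}. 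All the hypotheses of Theorem~\ref{theomor1} are therefore satisfied.

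Next, I would translate condition~(2) of Theorem~\ref{theomor1} into the rank inequality. The condition $\pi_*(G)\otimes\qq = \pi_\odd(G)\otimes\qq$ is automatic, since by Hopf the minimal model of $G$ is concentrated in odd degrees. The remaining condition $\dim \pi_*(G) - \rank H \leq 1$ becomes exactly $\rank G - \rank H \leq 1$, since $\dim \pi_*(G)\otimes\qq = \dim V_G = \rank G$. Thus condition~(2) of Theorem~\ref{theomor1} is equivalent to $\rank G - \rank H \leq 1$, while condition~(1) says precisely that $\H^\even(p)\colon \H^\even(\B H)\to \H^\even(\biq{G}{H})$ is surjective. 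This yields the claimed equivalence.

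There is no real obstacle here: the statement is essentially a book-keeping exercise once Theorem~\ref{theomor1}, the biquotient Sullivan model of \Cref{Sullivan-for-biquotient}, and the purity assertion of \Cref{biq_pure} are in place. The only minor point worth emphasising in the write-up is the verification that the relative model \eqref{EQN:model_fro_almost_free_quotients} applied to $X=G$ with its free $H$-action reproduces (after identifying $V_{\B H}$ with $V_H^{+1}$) the standard biquotient model from \Cref{Sullivan-for-biquotient}, so that purity indeed applies and no additional computation is needed.
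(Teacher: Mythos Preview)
Your proposal is correct and matches the approach the paper itself outlines in \Cref{remhomapp}, where it notes that \Cref{THM:surjectivity-of-Kalpha-biquotients} follows by applying \Cref{theomor1} to the free $H$-action on $X=G$. The paper's formal proof merely unpacks this one level, applying \Cref{cor01} directly to the pure biquotient model of \Cref{Sullivan-for-biquotient} rather than invoking the general \Cref{theomor1}; since \Cref{theomor1} is itself proved via \Cref{prop01}/\Cref{cor01}, the two arguments coincide in substance.
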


Although we have proven the more general result \Cref{theomor1} above, we review it here for biquotients to illustrate how the tools from \Cref{secpre} apply in this case.
First, we rewrite the cohomology of $\biq{G}{H}$ by refining the model in \Cref{prop00} in the special case of a biquotient as follows.
\begin{align*}
\H(\Lambda V,\dif)&\cong \H(\Lambda V_{\B H}\otimes \Lambda V_G,\dif) \\
&\cong \H((\Lambda V',\dif) \otimes (\Lambda\langle x_i\rangle_{1\leq i\leq k}, 0)) \\
&\cong \H(\Lambda V',\dif) \otimes \Lambda\langle [x_i]\rangle_{1\leq i\leq k}
\\&\cong (\biq{\H^*(\B H)}{\H^*(\B G)}\oplus N) \otimes \Lambda\langle [x_i]\rangle_{1\leq i\leq k}
\end{align*}
The first isomorphism follows from the standard model of a biquotient given in \Cref{Sullivan-for-biquotient}, which is pure. In the second line we pass to the cohomology of the quasi-isomorphic minimal model from \Cref{prop00}, which is simplified in the third line. In the last isomorphism we decompose further the factor $\H (\Lambda V',\dif)$ by applying the lower grading in cohomology described in \eqref{EQ:lower_grading}. More precisely, we define
\begin{equation*}
    \biq{\H^*(\B H)}{\H^*(\B G)}:=\H_0(\Lambda V',\dif), \quad \text{and} \quad N:=\bigoplus_{i> 0} \H_i(\Lambda V',\dif)
\end{equation*}
Observe that the notation for $\H_0(\Lambda V',\dif)$ is rather intuitive, since this space is exactly the quotient of $\H^*(\B H)\cong \Lambda V_{\B H}$ by the ideal generated by $\dif (\Lambda V_G)$. (Lower gradings $>0$ involve ``Massey products''.) For later reference we summarize the discussion above in the following:

\begin{prop}\label{prop02}
For $(\Lambda V,\dif)$ a model of a biquotient $\biq{G}{H}$ of compact connected Lie groups we have
\begin{align*}
\H(\Lambda V,\dif)\cong (\biq{\H^*(\B H)}{\H^*(\B G)}\oplus N) \otimes \Lambda\langle [x_i]\rangle_{1\leq i\leq k}
\end{align*}
Here $N$ has a positive lower grading and its non-trivial top degree plus $k$ equals $\chi_\pi(\biq{G}{H})=\rk G - \rk H$.
\end{prop}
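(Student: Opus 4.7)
The proof essentially assembles the chain of isomorphisms sketched just before the statement, so my plan is to trace through it and pinpoint the two nontrivial inputs.

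First I would invoke \Cref{Sullivan-for-biquotient} to write down the standard (pure) Sullivan model $(\Lambda V_{\B H}\otimes \Lambda V_G,\dif)$ of $\biq{G}{H}$, and then replace it by a quasi-isomorphic minimal model, which is again pure elliptic by the remarks in \Cref{secpre}. Applying \Cref{prop00} to this minimal model, it splits up to isomorphism as
\begin{align*}
(\Lambda V,\dif)\cong (\Lambda V',\dif)\otimes (\Lambda\langle x_i\rangle_{1\leq i\leq k},0)
\end{align*}
with $(\Lambda V',\dif)$ minimal pure elliptic and $k$ maximal.

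Second, I would take cohomology. Since the second tensor factor has trivial differential, the Künneth theorem yields $\H(\Lambda V,\dif)\cong \H(\Lambda V',\dif)\otimes \Lambda\langle [x_i]\rangle_{1\leq i\leq k}$. Now I would decompose the first factor using the lower grading \eqref{EQ:lower_grading}, setting
\begin{align*}
\biq{\H^*(\B H)}{\H^*(\B G)}:=\H_0(\Lambda V',\dif),\qquad N:=\bigoplus_{i>0}\H_i(\Lambda V',\dif),
\end{align*}
which gives the stated decomposition. To legitimize the notation $\biq{\H^*(\B H)}{\H^*(\B G)}$, I would note that, in the standard model, $\H_0$ is exactly the quotient of $\H^*(\B H)\cong \Lambda V_{\B H}$ by the ideal generated by $\dif(\Lambda V_G)=\H^*(\B j)(V_{\B G})$, as already observed in the text preceding the statement (and using invariance of $\H_0$ under quasi-isomorphism of pure algebras).

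For the final clause, $N$ evidently has positive lower grading by construction. To identify its top lower degree, I would apply \Cref{lemmatop} to $(\Lambda V',\dif)$: its top non-trivial lower grading is $r:=\chi_\pi(\Lambda V',\dif)$. Since the $x_i$ have lower degree zero, this coincides with the top lower degree of $N$ (provided $N\neq 0$). Finally, additivity of $\chi_\pi$ on tensor products combined with $\chi_\pi(\Lambda\langle x_i\rangle_{1\leq i\leq k},0)=k$ gives $\chi_\pi(\biq{G}{H})=r+k$, and equation \eqref{EQ:homotopy_Euler_biquotient} identifies this with $\rk G-\rk H$.

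There is no real obstacle here; the only subtlety worth a sentence is ensuring that the lower-grading decomposition and the splitting of \Cref{prop00} are compatible — i.e.\ that $\H_i$ is invariant under the quasi-isomorphism to the minimal model and that multiplication by the classes $[x_i]$ (which have lower degree $0$) preserves the lower grading — both of which are immediate from the definition.
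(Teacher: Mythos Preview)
Your proposal is correct and follows exactly the paper's approach: the proposition is stated there explicitly as a summary of the preceding chain of isomorphisms, which is precisely the chain you reproduce. One small slip: the classes $[x_i]$ have lower degree~$1$ (they lie in $V^\odd$), not~$0$; but this is irrelevant to your argument since $N$ is defined via the lower grading of $\H(\Lambda V',\dif)$, and the identity $r+k=\chi_\pi(\biq{G}{H})$ follows from additivity of $\chi_\pi$ without any reference to the lower degree of the $x_i$.
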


\begin{rem}\label{REM:image_biquotient_map}
Under the isomorphism in \Cref{prop02} for the cohomology of $\biq{G}{H}$, the image of the map
$$
    \H^\even(p)\colon  \H^\even(\B H)\to (\biq{\H^*(\B H)}{\H^*(\B G)}\oplus N) \otimes \Lambda\langle [x_i]\rangle_{1\leq i\leq k}
$$
equals \(\H_0(\Lambda V,d) = \biq{\H^*(\B H)}{\H^*(\B G)}\).
\end{rem}

With these observations at hand we can already make the following observations to illustrate the surjectivity of $\H^\even(p)\colon \H^\even(\B H)\to\H^\even(\biq{G}{H})$ in different cases. The reader may skip this part and directly go to its short proof below. We set $c:=\rank(G)-\rank(H)$, and we will make use of the notation and the statements in \Cref{prop02} and \Cref{REM:image_biquotient_map} throughout.

\begin{itemize}
\item In the case $c=0$ it follows that $N=0$ and $k=0$, thus the quotient $\biq{\H^*(\B H)}{\H^*(\B G)}$ of cohomology algebras indeed computes the cohomology of $\biq{G}{H}$ (and, moreover, encodes the rational type of $\biq{G}{H}$ due to its formality). Hence $\H^\even(p)$ is just a quotient map whence it is surjective. Note that in this case $\biq{G}{H}$ is positively elliptic and $\H^*(\biq{G}{H})$ is concentrated in even degrees (see \eqref{EQ:homotopy_Euler_biquotient} and \eqref{EQ:positive_elliptic_biquotients}).

\item For $c=1$, we have that either $N\neq 0$ and $k=0$, or $N=0$, $k=1$. In the latter subcase $\biq{G}{H}$ is formal, and cohomology splits as
\[
\H^*(\biq{G}{H})\cong \H^\even(\biq{G}{H})\oplus \H^\even(\biq{G}{H}) \cdot [x_1]
\]
Moreover, $\H^\even(\biq{G}{H})$ is a quotient of $\H^*(\B H)$.

For $k=0$, $N\neq 0$ the biquotient is non-formal, and we derive that $N=\H_1(\Lambda V',\dif)$.
It follows that
\begin{align*}
\H^*(\biq{G}{H})\cong \H_0(\biq{G}{H})\oplus \H_1(\biq{G}{H})
\end{align*}
and hence from \eqref{EQ:even_odd_upper_lower} we get
$$
\H_0(\biq{G}{H})=\H^\even (\biq{G}{H})\quad \text{ and }\quad \H_1(\biq{G}{H})=\H^\odd (\biq{G}{H})
$$
In particular, since $\H_0(\biq{G}{H})=\biq{\H^*(\B H)}{\H^*(\B G)} $ is just a  quotient of $\H^*(\B H)$, the morphism $\H^\even(p)$ again is surjective.

Note that it is not at all necessary to split into these two subcases; the latter arguments also apply to the formal case with
\begin{align*}
\H^\odd (\biq{G}{H})\cong \H_1(\biq{G}{H}) \cong \H^\even(\biq{G}{H}) \cdot [x_1]
\end{align*}
Nonetheless, we hope that the distinction serves to illustrate the arguments.

\item
If $c\geq 2$ and $\biq{G}{H}$ is formal, then the decomposition in \Cref{prop00} implies that $(\Lambda V',\dif)$ is positively elliptic and hence $k=c\geq 2$. Thus there are two generators $[x_1], [x_2]$ in the free algebra whose product is non-trivial of (positive lower) even degree and hence cannot lie in the image of $\H^\even(p)$.

\item
Suppose more generally that $c$ is even and $\geq 2$. Then so is $\dim \biq{G}{H}$, see \eqref{EQ:parity_dim_biquotient}.
Since $c\geq 2$, rational cohomology cannot be concentrated in even degrees \eqref{EQ:positive_elliptic_biquotients}, hence there is $0\neq \alpha\in \H^\odd(\biq{G}{H})$. Using that $c$ is even, by Poincar\'e duality there is $0\neq \beta \in \H^\odd(\biq{G}{H})$ with $\alpha\cdot \beta\neq 0$. Moreover, since $\alpha^2=0$, $\beta \not \in I(\alpha)$  with $I(\alpha)$ the ideal generated by $\alpha$. It follows that the volume form $\alpha\cdot \beta$ cannot lie in $\H_0(\biq{G}{H})$ and hence is not in the image of $\H^\even(p)$.

\item
It remains to see what happens for $c\geq 2$.
In the terminology from \Cref{prop01}, we have that $k+r\geq 2$. Thus we have to consider three (not mutually disjoint) cases. In all three cases we have to identify a non-trivial even-degree cohomology class not represented by $\Lambda V^\even$.
\begin{description}
\item[$k\geq 2$] Here such a class is given by $[x_1]\cdot [x_2]$.
\item[$k=r=1$] A generator of $\H_r^{d'}(\Lambda V',\dif)$ multiplied with $[x_1]$.
\item[$r\geq 2$] In this case, such a class is constructed in \Cref{prop01}.
\end{description}
Since $\im \H^*(\B H) \In \H_0(\Lambda V',\dif)$, none of the constructed classes ever lies in the image of $\H^*(\B H)$ under $\H^\even(p)$.

\end{itemize}

\smallskip
The direct proof of Theorem~\ref{THM:surjectivity-of-Kalpha-biquotients} follows by applying Proposition \ref{prop01}.

\begin{proof}[Proof of Theorem~\ref{THM:surjectivity-of-Kalpha-biquotients}]
  Let $\biq{G}{H}$ a biquotient of compact connected Lie groups. Let $(\Lambda V,\dif)$ be the standard model for $\biq{G}{H}$ given in \Cref{Sullivan-for-biquotient}, which is pure. From \Cref{REM:image_biquotient_map} we know that the image of the map \(\H^\even(p)\) equals \(\H_0(\Lambda V,d) = \biq{\H^*(\B H)}{\H^*(\B G)}\). Thus \(\H^\even(p)\) is surjective if and only if \(\H_0(\Lambda V,d)=\H^\even(\Lambda V,d)\). Since $(\Lambda V,\dif)$ is pure, \Cref{cor01} tells us that this is the case if and only if $\chi_\pi(\biq{G}{H})\leq 1$. The identity $\chi_\pi(\biq{G}{H})=\rank G-\rank H$ in \eqref{EQ:homotopy_Euler_biquotient} completes the proof.
\end{proof}

\begin{proof}[Proof of Theorem~\ref{THM:surjectivity of homogeneous bundles}]
Let $G/H$ be a homogeneous space as in \Cref{THM:surjectivity of homogeneous bundles}. By \Cref{surjectivity-of-forgetful-map-in-Kgroups} we know that \Cref{THM:surjectivity of homogeneous bundles} is equivalent to the following statement: ``The morphism \(u\colon \K^0_G(G/H) \to \K^0(G/H)\) is surjective if and only if $\rank G -\rank H\leq 1$.''
In \cite[Theorem~3.6]{GonZib1} it is shown that \(u\colon \K^0_G(G/H) \to \K^0(G/H)\) is surjective if $\rank G -\rank H\leq 1$ (recall that $\K^0_G(G/H)\cong\R(H)$).  So it remains to show that \(u\colon \K^0_G(G/H) \to \K^0(G/H)\) is not surjective if $\rank G -\rank H\geq 2$. We argue by contradiction. Suppose it is surjective for some homogeneous space as in the assertion of \Cref{THM:surjectivity of homogeneous bundles} with $\rank G -\rank H\geq 2$. Then its rationalization \(u\otimes\qq\colon \K^0_G(G/H)\otimes\qq \to \K^0(G/H)\otimes\qq\) is surjective as well, and so is the map \(\H^\even_G(G/H;\qq)\to \H^\even(G/H;\qq)\) by \Cref{surjectivity_of_Heven_iff_K0}. Yet, the latter is never surjective if $\rank G -\rank H\geq 2$ by \Cref{THM:half_equiv_formal_homogeneous}.
\end{proof}

\begin{proof}[Proof of \Cref{THM:double_stabilization_homogeneous}]
It is an immediate consequence of \Cref{THM:half_equiv_formal_homogeneous}, \Cref{surjectivity_of_Heven_iff_K0}, \Cref{rational-surjectivity-of-forgetful-map-in-Kgroups} and finally Part (1) of \cref{hop:non-negative-metrics}.
\end{proof}


\subsubsection{Cohomogeneity one spaces}\label{seccohom}
We refer the reader to \Cref{sec:prelim_cohomo1} for the structure of cohomogeneity one manifolds. In particular, we described the double mapping cylinder decomposition these manifolds admit in Equation \eqref{EQ: cohomo 1 decomposition}, and the fact that they are determined by a group diagram $(G,H,K_+, K_-)$ with $K_\pm/H$ spheres. We will use the notation $F_\pm:=K_\pm/H$. The goal of this section is to prove the following result.

\begin{theo}\label{THM:surjectivity_of_forgetful_map_even_cohomology}
Let $M$ be a cohomogeneity one manifold with all groups in the associated diagram \((G,H,K_-,K_+)\) connected and with \(F_\pm\) both odd-dimensional spheres. Then the morphism $\H^\even(j)\colon \H_G^\even(M)\to \H^\even(M)$ is surjective if and only if \(\rank G - \rank K_\pm\leq 1\).

(Note that our assumption on $F_\pm$ implies that $\rk K_\pm - \rk H = 1$ and hence $\rk K_+=\rk K_-$.)
\end{theo}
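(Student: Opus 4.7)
The plan is to reduce the statement to the pure-model machinery of \Cref{secpre}, specifically to \Cref{PROP:sufficient_conditions_surjectivity_half_equivariant}(2) for the ``if'' direction and \Cref{cor01} for the ``only if'' direction. Concretely I need three ingredients: (a) pureness of $M$, (b) pureness of the Borel construction $M_G$, and (c) the identity $\chi_\pi(M)=\rk G-\rk K_\pm$, which translates the rank hypothesis into the hypothesis $\chi_\pi(M)\leq 1$ of \Cref{cor01}.

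Ingredient (a) will be granted by \Cref{props1s1}, since by hypothesis $F_\pm=K_\pm/H$ are odd-dimensional spheres. For (b), taking Borel constructions in the decomposition of \Cref{sec:prelim_cohomo1} yields a homotopy pushout
\[
M_G\simeq \B K_-\cup_{\B H}\B K_+,
\]
because $(G\times_{K_\pm}\mathbb{D}^{\ell_\pm+1})_G\simeq \B K_\pm$ and $(G/H)_G\simeq \B H$. All three classifying spaces have purely even-degree cohomology, and each map $\B H\to\B K_\pm$ is rationally a fibration with odd-sphere fibre and a single transgressive odd-degree generator. Assembling a model of the pushout from the two one-generator Koszul extensions $(\H^*(\B K_\pm)\otimes\Lambda(u_\pm),d)$ with $|u_\pm|=\dim F_\pm$ odd and $du_\pm$ the respective Euler class produces a pure (non-minimal, in general infinitely generated) Sullivan model for $M_G$. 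For (c), the case $\rk G=\rk K_\pm$ follows from \Cref{REM:maximal_rank_iff_positive_characteristic} combined with the elliptic identity $\chi(M)>0\Leftrightarrow \chi_\pi(M)=0$. The general case will be handled by a careful generator count in the pure minimal model of $M$ obtained from the pushout $G/K_-\cup_{G/H}G/K_+$: starting from $\chi_\pi(G/H)=\rk G-\rk H=\rk G-\rk K_\pm+1$ (using $\rk K_\pm-\rk H=1$), the cohomogeneity-one glueing along the two odd-sphere fibrations $G/H\to G/K_\pm$ decreases $\chi_\pi$ by precisely one.

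For the ``if'' direction, the hypothesis $\rk G-\rk K_\pm\leq 1$ yields $\chi_\pi(M)\leq 1$ by (c), and pureness of $M_G$ from (b) lets \Cref{PROP:sufficient_conditions_surjectivity_half_equivariant}(2) apply, giving surjectivity of $\H^\even(j)$. The ``only if'' direction I expect to be the main technical obstacle: if $\rk G-\rk K_\pm\geq 2$ then \Cref{cor01} produces a class $x\in\H^\even(M)$ of strictly positive lower degree, and I must show $x\notin \operatorname{im}(\H^\even(j))$. My plan is to establish this via a Mayer--Vietoris ladder relating the two pushout decompositions $M\simeq G/K_-\cup_{G/H}G/K_+$ and $M_G\simeq \B K_-\cup_{\B H}\B K_+$ through the fibre inclusions $j_{G/K_\pm}$ and $j_{G/H}$: using $\H^\odd(\B H)=0$ to pin down the contribution of the Mayer--Vietoris connecting homomorphism to $\H^\even(M_G)$, and combining with \Cref{THM:half_equiv_formal_homogeneous} applied to $G/K_\pm$ (which, under $\rk G-\rk K_\pm\geq 2$, obstructs $j_{G/K_\pm}^*$ from being surjective in even degrees), I expect to exhibit the desired class outside the image of $\H^\even(j)$. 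Pinning down the image of $\H^\even(j)$ precisely enough to carry out this last step is where the bulk of the work lies.
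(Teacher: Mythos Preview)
Your ``if'' direction is fine and is exactly the alternative the paper mentions: pureness of $M$ from \Cref{props1s1}, pureness of $M_G$, and $\chi_\pi(M)\leq 1$ feed into \Cref{PROP:sufficient_conditions_surjectivity_half_equivariant}(2).  Your ingredient (c) works too, though the paper obtains $\chi_\pi(M)=\rk G-\rk K_\pm$ in one line by simply counting generators in the model of \Cref{props1s1}: $\dim V^\odd-\dim V^\even=(\rk G+1)-(\rk H+2)=\rk G-\rk K_\pm$.

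Where you diverge from the paper is the ``only if'' direction, and here you are making things harder than necessary.  The Mayer--Vietoris ladder is avoidable.  The point is that the model for $M_G$ you build in (b) --- the paper writes it as $\H^*(\B H)[e_+,e_-]/(e_+e_-)$ in \Cref{rem:G-cohomology-of-M} --- and the pure model $(\Lambda V,\dif)$ of $M$ from \Cref{props1s1} are related by an \emph{explicit} morphism: the map of Sullivan algebras modelling $j$ factors through $\Lambda V^\even = \H^*(\B H)[e_+,e_-]$.  Hence the image of $\H^\even(j)$ is \emph{equal to} $\H_0(\Lambda V,\dif)$, not merely contained in it.  With that identification in hand, \Cref{cor01} gives both implications at once: $\H^\even(j)$ surjective $\iff \H_0=\H^\even \iff \chi_\pi(M)\leq 1$.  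So the same models you already need for (a) and (b) settle the ``only if'' direction immediately; you do not need to chase classes through Mayer--Vietoris or invoke \Cref{THM:half_equiv_formal_homogeneous} for $G/K_\pm$.
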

Before going into the proof, let us make a number of comments on the context, the statement and its consequences.

The surjectivity of the map $\H^*(j)\colon \H_G^*(M)\to \H^*(M)$ in all degrees has been studied. It is shown in \cite[Corollary 1.3]{GM14:coho1} that a cohomogeneity one manifold with associated diagram \((G,H,K_-,K_+)\) and $G$ connected is equivariantly formal if and only if $\max \{\rank K_\pm\}=\rank G$.

In \Cref{THM:surjectivity_of_forgetful_map_even_cohomology} we solely focus on the case when both $F_\pm$ are odd-dimensional spheres, as this is the only case necessary for our geometric considerations (i.e. for \Cref{hop:non-negative-metrics}, where the $F_\pm$ are required to be circles). A generalization to arbitrary cohomogeneity one manifolds seems easily feasible but tedious and not necessary for our purposes.

\Cref{THM:surjectivity_of_forgetful_map_even_cohomology} is a crucial ingredient in the proofs of Theorems \ref{THM: big theorem} and \ref{THM:rational_stabilization_cohomo1}, which are given at the end of this section.


The proof of \Cref{THM:surjectivity_of_forgetful_map_even_cohomology} consists of two steps. First we compute a model for the cohomogeneity one spaces we are interested in (\Cref{props1s1}). This will then allow us in a second step to identify the morphism from $G$-equivariant cohomology to ordinary cohomology explicitly when deriving its surjectivity.

So let us start with the model. In the following result we slightly extend famous work by Grove and Halperin \cite{GH87} and explicitly compute a Sullivan model for the cohomogeneity one manifold in our special case. This will come in very handy to compare its cohomology to its $G$-equivariant cohomology.

\begin{prop}\label{props1s1}
Let $M$ be a cohomogeneity one manifold with all groups in the associated diagram \((G,H,K_-,K_+)\) connected and with $F_\pm$ both odd-dimensional spheres.
Then a pure Sullivan model
of $M$ is given by
\begin{align*}
\big(\H^*(\B H)[e_+,e_-]\otimes \H^*(G)\otimes \Lambda \langle n \rangle,\dif\big)
\end{align*}
with $\deg e_\pm=\dim F_\pm+1$, $\dif|_{\H^*(\B H)[e_+,e_-]}=0$, $\dif n=e_+\cdot e_-$ and the differential on $\H^*(G)$ induced as a derivation up to degree shift by $\H^*(\B G)\to \H^*(\B K_\pm)$ (and $\H^*(\B K_\pm)\cong \H^*(\B H)[e_\pm]$).
\end{prop}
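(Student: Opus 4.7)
The strategy is to pass to the Borel construction $M_G$, compute a (non-minimal) Sullivan model of it using the description of $M$ as a homotopy pushout, and then extract a model of $M$ as the homotopy fibre of the Borel fibration $M\hookrightarrow M_G\to\B G$. First, the decomposition~\eqref{EQ: cohomo 1 decomposition} together with the deformation retractions $G\times_{K_\pm}\mathbb{D}^{\ell_\pm+1}\simeq G/K_\pm$ exhibits $M$ as the homotopy pushout of $G/K_-\leftarrow G/H\to G/K_+$. Since the Borel construction is a homotopy colimit and therefore preserves homotopy pushouts, and since $(G/L)_G\simeq \B L$ for any closed subgroup $L\subset G$, this yields
\[
  M_G\simeq\B K_-\cup_{\B H}\B K_+.
\]

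The hypothesis that $K_\pm/H$ is an odd-dimensional sphere forces, via the Serre spectral sequence of $S^{\ell_\pm}\to\B H\to\B K_\pm$, that the restriction $\H^*(\B K_\pm)\to\H^*(\B H)$ is surjective with kernel the principal ideal $(e_\pm)$, where $e_\pm\in\H^{\ell_\pm+1}(\B K_\pm)$ is the Euler class of the sphere bundle. As $\rank K_\pm=\rank H+1$, a choice of section yields an algebra isomorphism $\H^*(\B K_\pm)\cong\H^*(\B H)[e_\pm]$. Since both restrictions to $\B H$ are surjective, they are fibrations of CDGAs, so the homotopy pushout coincides with the strict pullback, which one identifies directly with $\H^*(\B H)[e_-,e_+]/(e_-e_+)$. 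Since $e_-e_+$ is a non-zero divisor in the polynomial ring $\H^*(\B H)[e_-,e_+]$, the standard Koszul resolution gives the quasi-isomorphic Sullivan model
\[
  \bigl(\H^*(\B H)[e_-,e_+]\otimes\Lambda\langle n\rangle,\; dn=e_-e_+\bigr)
\]
for $M_G$, with $\deg n=\ell_-+\ell_++1$.

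To descend to $M$, we extend this to a relative Sullivan model over $(\H^*(\B G),0)$. Following the standard construction of a Sullivan model for a fibration of the form $G/L\to\B L\to\B G$, for each even generator $y\in V_{\B G}$ one adjoins an odd generator $v_y$ of degree $|y|-1$ and imposes $dv_y=\iota_\pm(y)-y$, where $\iota_\pm\colon\H^*(\B G)\to\H^*(\B K_\pm)\hookrightarrow\H^*(\B H)[e_-,e_+]$ uses the splitting $\H^*(\B K_\pm)\cong\H^*(\B H)[e_\pm]$. These $v_y$ form a basis of $V_G$ with $\Lambda V_G=\H^*(G)$. Taking the homotopy fibre over $\B G$, i.e.\ setting $V_{\B G}=0$, yields exactly the claimed algebra
\[
  \bigl(\H^*(\B H)[e_-,e_+]\otimes\H^*(G)\otimes\Lambda\langle n\rangle,\; d\bigr),
\]
with $d=0$ on $\H^*(\B H)[e_-,e_+]$, $dn=e_-e_+$, and $dv_y=\iota_\pm(y)$. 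Pureness is then immediate: the even generators $V_{\B H}\cup\{e_-,e_+\}$ are all closed (note $\deg e_\pm=\ell_\pm+1$ is even since $\ell_\pm$ is odd), and the odd generators $n$ and $V_G$ have differentials lying in $\H^*(\B H)[e_-,e_+]\subset\Lambda V^\even$.

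The main obstacle lies in identifying $\B K_-\cup_{\B H}\B K_+$ with the strict CDGA pullback, and exhibiting this pullback as $\H^*(\B H)[e_-,e_+]/(e_-e_+)$. Both steps hinge crucially on the oddness of $\ell_\pm$: without it, the map $\H^*(\B K_\pm)\to\H^*(\B H)$ need not be surjective and the splitting $\H^*(\B K_\pm)\cong\H^*(\B H)[e_\pm]$ can fail. Once these identifications are in hand, the Koszul resolution, the relative Sullivan construction, and the pureness check are essentially formal.
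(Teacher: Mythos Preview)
Your argument is correct and uses the same core algebraic computation as the paper, but the order of operations is reversed.  The paper applies the Grove--Halperin algebraic double mapping cylinder directly to the diagram \(G/K_+\leftarrow G/H\to G/K_-\): each homogeneous space already carries its standard model \((\H^*(\B K_\pm)\otimes\H^*(G),\dif)\), the surjectivity of \(\H^*(\B K_\pm)\to\H^*(\B H)\) identifies the fibre product as \(\H^*(\B H)[e_+,e_-]/(e_+e_-)\), and the generator \(n\) is adjoined at the end to turn this into a free algebra.  You instead first pass to \(M_G\simeq\B K_+\cup_{\B H}\B K_-\), compute its model via the same pullback, and only then introduce the \(\H^*(G)\)-factor by realizing the Borel fibration as a relative Sullivan algebra and taking the fibre.

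What each approach buys: the paper's route is one step shorter and cites the Grove--Halperin machinery as a black box.  Your route produces the model of \(M_G\) and the identification \(\H^*_G(M)\cong\H^*(\B H)[e_+,e_-]/(e_+e_-)\) as a by-product, which the paper derives separately afterwards (their \Cref{rem:G-cohomology-of-M}); in that sense your organization is arguably more economical for the downstream application.  One point of sloppiness shared with the paper: the expression \(\iota_\pm(y)\) for the differential on \(V_G\) hides the fact that the image of \(y\in\H^*(\B G)\) in \(\H^*(\B H)[e_+,e_-]/(e_+e_-)\) is really the element of the fibre product \(\H^*(\B K_+)\times_{\H^*(\B H)}\H^*(\B K_-)\) determined by the pair \((\iota_+(y),\iota_-(y))\); this is unambiguous once the identification is spelled out, but deserves a word.
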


\begin{rem}\label{rem:G-cohomology-of-M}
Note that one can directly derive that, for a manifold $M$ as in \Cref{props1s1}, there is an isomorphism
$$
\H_G^*(M)\cong \H^*(\B H)[e_+,e_-]/(e_+\cdot e_-)
$$
with $\H^*(\B K_\pm)\cong \H^*(\B H)[e_\pm]$ and with $\deg e_\pm=\dim F_\pm+1$. This isomorphism is shown to hold in \cite[Theorem~1.1(b), p.~2]{CGHM18} for a wider class of cohomogeneity one spaces.

Indeed, we can obtain a model of the Borel construction $M_G$ as follows: tensor our model of \(M\) from \Cref{props1s1} with \(\H^*(\B G)\), and use that the linear part of the differential
is an isomorphism from the algebra generators of $\H^*(G)$ to the ones of $\H^*(\B G)$. Consequently, up to isomorphism we can split off the contractible algebra $\H^*(\B G)\tilde\otimes \H^*(G)\simeq \qq$, where $\tilde\otimes$ denotes twisted tensor product. This yields the pure model of $M_G$:
$$
\left( \H^*(\B H)[e_+,e_-]\otimes \Lambda \langle n\rangle ,\dif \right)\simeq \left( \H^*(\B H)[e_+,e_-]/(e_+\cdot e_-) ,\dif \right)
$$
with $\dif n=e_1\cdot e_2$, which is formal and hence computes \(\H_G^*(M)\) as stated above.
\end{rem}

\begin{rem}
Observe that the models in \Cref{props1s1} and \Cref{rem:G-cohomology-of-M} for $M$ and $M_G$ are pure.
As we shall see in the proof of \Cref{THM:surjectivity_of_forgetful_map_even_cohomology} below, $\chi_\pi(M)=\rank G - \rank K_{\pm}$. Hence at this point one might also just draw on \Cref{PROP:sufficient_conditions_surjectivity_half_equivariant} to conclude one direction of \Cref{THM:surjectivity_of_forgetful_map_even_cohomology}, namely the surjecitivity part. 
\end{rem}

\begin{proof}[Proof of \Cref{props1s1}]
  From \cite[p.~445]{GH87} we see that a model of a double mapping cylinder is given by its induced algebraic double mapping cylinder.
  In our case, this is the algebraic mapping cylinder of the double mapping cylinder of
  \begin{align*}
    G/K_+ \leftarrow G/H \rightarrow G/K_-
  \end{align*}
  Using the standard Sullivan models of homogeneous spaces (of compact \emph{connected} Lie groups) as in \Cref{Sullivan-for-biquotient} this dualizes to
  \begin{align*}
    (\H^*(\B K_+)\otimes \H^*(G),\dif) \xto{\phi_+} (\H^*(\B H)\otimes \H^*(G) ,\dif) \xleftarrow{\phi_-}(\H^*(\B K_-)\otimes \H^*(G) ,\dif)
  \end{align*}
  Note that the $\phi_\pm$ induce the identity on $\H^*(G)$ and equal the natural morphisms $\H^*(\B K_\pm)\to \H^*(\B H)$ on the first factor (i.e. those induced by the inclusions \(H\hookrightarrow K_\pm\)).
  This means that a model for $M$ is given by
  \begin{align*}
      &\{(x,y) \in(\H^*(\B K_+)\oplus \H^*(\B K_-)) \mid \phi_+(x)=\phi_-(y)\} \tilde\otimes \H^*(G)
    \\\simeq&\big(\big(\H^*(\B H)[e_+,e_-]/(e_+\cdot e_-)\big)\otimes \H^*(G),\dif\big)
  \end{align*}
   where, by construction, the differential on the first tensor factor is trivial. The differential on $\H^*(G)$ is the one induced as a derivation on generators of $\H^*(G)$ up to a degree shift by $+1$ by $\H^*(\B G)\to \H^*(\B K_\pm)$, see \cite[Theorem~3.50, p.~137]{FOT08}.
  Here we take profit of the surjectivity of the morphisms $\H^*(\B K_\pm)\to \H^*(\B H)$, since $F_\pm$ are odd-dimensional spheres \cite[Proposition 3.1]{GM14:coho1}. Moreover, $e_\pm$ denote the volume forms of the fibre spheres $F_\pm$ up to degree shift, i.e.~$\H^*(\B K_\pm)=\H^*(\B H)[e_\pm]$.

  We now find a quasi-isomorphic Sullivan model $(\Lambda V,\dif)$ for this model---which is already rather close to a Sullivan algebra---by introducing an additional generator \(n\) with $\dif n=e_+\cdot e_-$. (Indeed, we may replace the base algebra $\H^*(\B H)[e_+,e_-]/e_+\cdot e_-$ by a minimal model and apply \cite[Corollary, p.~199]{FHT01}.) This yields the model from the assertion. It is pure by construction.
\end{proof}

Now we can give the proof of \Cref{THM:surjectivity_of_forgetful_map_even_cohomology}.

\begin{proof}[Proof of \Cref{THM:surjectivity_of_forgetful_map_even_cohomology}]
Using the model $(\Lambda V,\dif)$ of $M$ constructed in \Cref{props1s1} and the model of $M_G$ discussed in \Cref{rem:G-cohomology-of-M}, we can write the morphism $\H_G^\even(M)\to \H^\even(M)$ as
\begin{align*}
&\H^\even\left( \H^*(\B H)[e_+,e_-]\otimes \Lambda \langle n\rangle ,\dif \right)
\\ \to &
\H^\even\big(\H^*(\B H)[e_+,e_-]\otimes \Lambda \langle n \rangle \otimes \H^*(G),\dif\big)
\end{align*}
Both models are pure. Consequently, we may decompose $\H^*(M)=\H(\Lambda V,\dif)$ using the lower grading
\eqref{EQ:lower_grading}.
By construction, $\H_0(\Lambda V,\dif)$ is the cohomology generated by $\Lambda V^\even=\H^*(\B H)[e_+,e_-]$ in $\H(\Lambda V,\dif)=\H^*(M)$. There is a commutative diagram
\begin{align*}\footnotesize
\xymatrix{\H_0(\Lambda V,\dif) \ar@{^{(}->}[r]& \H^\even(\Lambda V,\dif)\ar@{=}[d]\\
\H^\even\left( \H^*(\B H)[e_+,e_-]\otimes \Lambda \langle n\rangle ,\dif \right)\ar@{->>}[u]\ar[r] & \H^\even\big(\H^*(\B H)[e_+,e_-]\otimes \Lambda \langle n \rangle \otimes \H^*(G),\dif\big) \ar@{=}[d]\\
\H_G^\even(M) \ar@{=}[u]\ar[r] &\H^\even(M)
}
\end{align*}
It follows that the surjectivity of \(\H_G^\even(M) \to \H^\even(M)\) is equivalent to the equality $\H_0(\Lambda V,\dif) = \H^\even(\Lambda V,\dif)$.

By \Cref{cor01} we know that $\H_0(\Lambda V,\dif) = \H^\even(\Lambda V,\dif)$ if and only if \( \chi_\pi(\Lambda V,\dif) \leq 1\). By computing the relevant homotopy Euler characteristic, we find that this is precisely the inequality that appears in the Theorem:
\begin{align*}
\chi_\pi(M)&=\chi_\pi(\Lambda V,\dif)=\dim V^\odd-\dim V^\even\\&=(\rk G+1)- (\rk K_\pm+1)=\rk G-\rk K_\pm
\end{align*}
Indeed, the $+1$ summand in $V^\odd$ results from the extra generator $n$ encoding the relation $e_+\cdot e_-=0$, the $+1$ summand in $V^\even$ is due to $\dim V^\even$ being equal to the number of algebra generators of $\H^*(\B H)[e_+,e_-]$ (which equals $\dim \H^*(\B K_\pm)+1$, where dimension is Krull dimension).
\end{proof}

We are finally ready to prove  \Cref{THM:rational_stabilization_cohomo1} and Part~(\ref{THM: rational converse soul for cohomo 1}) of \Cref{THM: big theorem}.

\begin{proof}[Proof of \Cref{THM:rational_stabilization_cohomo1}]
  For a cohomogeneity one manifold as in \Cref{THM:rational_stabilization_cohomo1} the induced morphism $\H_G^\even(M;\qq)\to \H^\even(M;\qq)$ is surjective by \Cref{THM:surjectivity_of_forgetful_map_even_cohomology}, implying the surjectivity of the map \(\K^0_G(M)\otimes\qq \to \K^0(M)\otimes\qq\) by \Cref{surjectivity_of_Heven_iff_K0}. The theorem follows by  \Cref{rational-surjectivity-of-forgetful-map-in-Kgroups}.
\end{proof}

\begin{proof}[Proof of Part~(\ref{THM: rational converse soul for cohomo 1}) of \Cref{THM: big theorem}]
  Let $M$ be a cohomogeneity one manifold as in Part~(\ref{THM: rational converse soul for cohomo 1}) of \Cref{THM: big theorem}, with associated diagram \((G,H,K_-,K_+)\), and let $E$ be an arbitrary real vector bundle over $M$. By \Cref{THM:rational_stabilization_cohomo1} there exist integers \(q,k\) such that the bundle \(qE\oplus \rr^k\) is isomorphic to (the underlying non-equivariant vector bundle of) a $G$-vector bundle $F$. The assumption \(K_\pm/H \cong S^1\) allows us to apply Part (2) of \cref{hop:non-negative-metrics}, thus $F$ carries a non-negatively curved metric. Its pullback via the isomorphism \(qE\oplus \rr^n= F\) yields the desired metric on \(qE\oplus \rr^k\), which is canonically identified with \(qE\times\rr^k\).
\end{proof}

\begin{rem}\label{REM:rational_Ktheory_circle}
In the case where $M$ is a $G$-manifold with orbit space a circle and $G$ connected, it is known that $M$ is equivariantly formal if and only if $\rank G = \rank H$, where $H$ denotes the principal isotropy group \cite[Corollary 1.3]{GM14:coho1}. In particular $\rank G = \rank H$ implies surjectivity of $\H_G^\even(M;\qq)\to \H^\even(M;\qq)$. Thus \Cref{surjectivity_of_Heven_iff_K0} and \Cref{rational-surjectivity-of-forgetful-map-in-Kgroups} can be applied to this case as well to obtain the same conclusion as in Part~(\ref{THM: rational converse soul for cohomo 1}) of \Cref{THM: big theorem}.
\end{rem}


\bibliographystyle{amsalpha}
\def\cprime{$'$}
\providecommand{\bysame}{\leavevmode\hbox to3em{\hrulefill}\thinspace}
\providecommand{\MR}{\relax\ifhmode\unskip\space\fi MR }
\providecommand{\MRhref}[2]{%
  \href{http://www.ams.org/mathscinet-getitem?mr=#1}{#2}
}
\providecommand{\href}[2]{#2}

\vfill

\noindent
\parbox[t]{0.3\textwidth}{\raggedright
\tiny \noindent \textsc
{Manuel Amann} \\
\textsc{Institut f\"ur Mathematik}\\
\textsc{Universit\"at Augsburg}\\
\textsc{Universit\"atsstra{\ss}e 14}\\
\textsc{86159 Augsburg}\\
\textsc{Germany}
}
\hfill\parbox[t]{0.3\textwidth}{
\tiny \noindent \textsc
{Marcus Zibrowius} \\
\textsc{Mathematisches Institut}\\
\textsc{Heinrich-Heine-Universit\"at}\\
\textsc{Universit\"atsstra{\ss}e 1}\\
\textsc{40225 D\"usseldorf}\\
\textsc{Germany}
}
\hfill\parbox[t]{0.3\textwidth}{\raggedright
\tiny \noindent \textsc{David Gonz\'alez-\'Alvaro}\\
\textsc{ETSI de Caminos, Canales y Puertos}\\
\textsc{Universidad Politécnica de Madrid}\\
\textsc{28040 Madrid}\\
\textsc{Spain}
}
\end{document}